\documentclass[12pt,reqno]{amsart}
\usepackage{amsmath}
\usepackage{hyperref}
\usepackage{xfrac}
\usepackage{stmaryrd,mathrsfs,bm,amsthm,mathtools,yfonts,amssymb,color}
\usepackage{xcolor}
\usepackage{courier}
\usepackage{verbatim}
\usepackage[letterpaper,margin=1.1in]{geometry}

\newcommand\sphere{\mathbb{S}}
\newcommand{\eps}{\varepsilon}
\newcommand{\haus}{\mathcal{H}}
\newcommand{\leb}{\mathcal{L}}
\newcommand{\sing}{\mathrm{sing}}
\newcommand{\graph}{\mathrm{graph}}
\newcommand{\dom}{\mathrm{dom}}
\newcommand{\spt}{\mathrm{spt}}

\newcommand{\bC}{\mathbf{C}}

\newcommand{\bT}{\mathbf{T}}
\newcommand{\bY}{\mathbf{Y}}
\newcommand{\bX}{\mathbf{X}}
\newcommand{\R}{\mathbb{R}}
\newcommand{\bbS}{\mathbb{S}}
\newcommand{\cC}{\mathcal{C}}
\newcommand{\cN}{\mathcal{N}}
\newcommand{\interior}{\mathrm{int}}
\newcommand{\reg}{\mathrm{reg}}
\newcommand{\cL}{\mathcal{L}}
\newcommand{\cV}{\mathcal{V}}

\newcommand{\clus}{\mathcal{E}}
\newcommand{\mass}{\mathbf{M}}
\newcommand{\size}{\mathbf{S}}
\newcommand{\ba}{\mathbf{a}}
\newcommand{\bb}{\mathbf{b}}

\newcommand{\axis}{{ \{0\} \times \R^m }}
\newcommand{\refC}{\mathbf{C}}

\newtheorem{theorem}{Theorem}[section]

\newtheorem{proposition}[theorem]{Proposition}

\newtheorem{prop}[theorem]{Proposition}
\newtheorem{lemma}[theorem]{Lemma}
\newtheorem{corollary}[theorem]{Corollary}
\theoremstyle{definition}
\newtheorem{definition}[theorem]{Definition}
\theoremstyle{remark}
\newtheorem{remark}[theorem]{Remark}
\theoremstyle{remark}
\newtheorem{example}[theorem]{Example}
\theoremstyle{remark}

\theoremstyle{remark}
\newtheorem{question}[theorem]{Question}
\theoremstyle{remark}

\numberwithin{equation}{section}

\title{The singular set of minimal surfaces near polyhedral cones}
\author{Maria Colombo, Nick Edelen, and Luca Spolaor}

\address{Institute for Theoretical Studies, ETH Z\"urich, Clausiusstrasse 47, CH-8092 Z\"urich, Switzerland}
\email{maria.colombo@eth-its.ethz.ch}

\address {Massachusetts Institute of Technology, 77 Massachusetts Avenue, Cambridge, MA 02139, USA}
\email{nedelen@mit.edu}

\address {Massachusetts Institute of Technology, 77 Massachusetts Avenue, Cambridge, MA 02139, USA}
\email{lspolaor@mit.edu}

\begin{document}

\begin{abstract}
We adapt the method of Simon \cite{simon1} to prove a $C^{1,\alpha}$-regularity theorem for minimal varifolds which resemble a cone $\bC_0^2$ over an equiangular geodesic net.  For varifold classes admitting a ``no-hole'' condition on the singular set, we additionally establish $C^{1,\alpha}$-regularity near the cone $\bC_0^2 \times \R^m$.  Combined with work of Allard \cite{All}, Simon \cite{simon1}, Taylor \cite{taylor}, and Naber-Valtorta \cite{naber-valtorta}, our result implies a $C^{1,\alpha}$-structure for the top three strata of minimizing clusters and size-minimizing currents, and a Lipschitz structure on the $(n-3)$-stratum. 
\end{abstract}

\maketitle

\tableofcontents


\section{Introduction}



In this paper we are interested in the regularity and fine-scale structure of stationary integral varifolds (and varifolds with bounded mean curvature) which resemble \emph{polyhedral-type} cones.  That is, we address the following question:
\begin{question}\label{question:main}
Suppose $\bC_0^2 \subset \R^{2+k}$ is the cone over an equiangular geodesic net in $\bbS^{1+k}$ (each junction meeting precisely three arcs), and $M^{2+m}$ is a stationary integral varifold weakly close to $\bC = \bC_0\times\R^m$.  Then what can be said about the regular and singular structure of $M$?
\end{question}

Understanding the relationship between the local (singular) structure of a minimal surface $M$ and its tangent cone $\bC$ has been a central question in geometric analysis, even when multiplicity is not a factor.  There are many profound and optimal results concerning the \emph{dimension} of the singular set, in various circumstances (e.g. \cite{All}, \cite{Alm}, \cite{DG}, \cite{Fed3}, \cite{ScSi}, \cite{BDG}), but relatively few works have addressed the \emph{structure} of $M$ near singularities (except notably when the singular set has dimension $0$, in which case it is often known to be locally finite \cite{Fed3, Chang}).

Generally the best results are known when a $\bC$ has smooth cross-section (and multiplicity-one), or when $M$ belongs to a class with very rigid tangent cone structure, with topological obstructions to ``perturbing'' them away.

For example, when $\bC$ is smooth, multiplicity-one the picture is largely complete: using ideas dating back to De Giorgi, Allard \cite{All} and Allard-Almgren \cite{AllAlm} proved that if $\bC$ satisfies an \emph{integrability} hypothesis, then $M$ is a locally $C^{1,\alpha}$-perturbation of $\bC$; later, in huge generality, Simon \cite{simon:loja} proved that for \emph{any} such $\bC$ (not necessarily integrable), then $M$ is locally a $C^{1,\log}$-perturbation; and in Adams-Simon \cite{adams-simon} this decay rate was shown to be sharp.

For $2$-dimensional $(\mass, \eps, \delta)$-minimizing sets in $\R^3$, Taylor \cite{taylor} (see also \cite{david1}, \cite{david2}) has shown the following beautiful structure theorem: $M^2$ decomposes into a union of $C^{1,\alpha}$ manifolds, meeting along various $C^{1,\alpha}$ curves at $120^\circ$, which in turn meet at isolated tetrahedral junctions.  (We remark that, though they may coincide in certain minimization problems, the notions of being $(\mass,\eps,\delta)$-minimizing and having bounded mean curvature are essentially independent.\footnote{For example, a union of $\geq 2$ intersecting lines is stationary but not $(\mass, \eps, \delta)$-minimizing.  Conversely, the $1$-d graph of the curve $f(x) = |x|^3 \sin(1/|x|)$ is $(\mass, c r^2, 1)$-minimizing in $B_1^2(0)$ but its mean curvature does not lie in any $L^p$ for $p > n = 1$.  See Section \ref{sec:clusters} for more background.})  Crucial to Taylor's work is a classification of tangent cones for this class of sets.  For certain $(\mass, 0, \delta)$-minimizing sets, our work generalizes Taylor's theorem to higher dimensions.

Simon \cite{simon1} was the first to consider the singular structure for general \emph{stationary} integral varifolds, and for varifold classes \emph{without} such rigid tangent cone structure.  He considered cones of the form $\bC = \bC_0^\ell\times \R^m$, where $\bC_0$ is smooth and integrable, and proved an ``excess decay dichotomy,'' which loosely says that \emph{either} $M$ has a significant gap in the singular set, \emph{or} the scale-invariant $L^2$-distance $\rho^{-n-2}\int_{M \cap B_\rho} d_{\bC}^2$ decreases when the radius is decreased by a fixed factor.

For certain tangent cones which cannot be ``perturbed away,'' like the union of three half-planes, and in general for classes of $M$ admitting some kind of ``no-hole'' condition on the singular set, Simon's result implies that $\sing(M)$ is locally a $C^{1,\alpha}$ manifold (see Theorem \ref{thm:Y-reg}).  More generally, he used his decay dichotomy to show countable-rectifiability of the singular set for particular ``multiplicity-one'' classes, e.g. for mod-$2$ minimizing flat chains.  

Later, in \cite{simon:rect} Simon used the Lojaciewicz inequality to show countable-rectifiability of each stratum\footnote{There is a subtle difference between the strata of Almgren used in Naber-Valtorta \cite{naber-valtorta}, and the strata used in Simon \cite{simon:rect}: Simon defines $S^m(M)$ to be the set of points for which every tangent cone $\bC$ satisfies $\dim(\sing \bC) \leq m$, rather than asking for every tangent cone to have $\leq m$ dimensions of symmetry.  In paticular, the if $M^2 = \bT$ is the tetrahedral cone (defined in Section \ref{sec:polyhedral}), then $0$ lies in the $0$-stratum for Naber-Valtorta, but only the $1$-stratum for Simon.} of $M$ in \emph{any} ``multiplicity-one class'' (e.g. codimension-$1$ mass-minimizing currents), and almost-everywhere uniqueness of the tangent cones in the singular set.  Just recently Naber-Valtorta \cite{naber-valtorta} proved rectifiability of each stratum for general stationary integral varifolds, and rectifiability \emph{with mass bounds} of the singular set for $M$ in any multiplicity-one class.

\vspace{5mm}

We generalize the seminal results of \cite{simon1} to prove that whenever $M$ admits a certain ``no-holes'' property on the singular set, and $\bC_0$ is integrable, then $M$ as in Question \ref{question:main} must be a $C^{1,\alpha}$-perturbation of $\bC$.  Integrability loosely means that every infintesimal motion through polyhedral cones can be generated by a family rotations, see Section \ref{sec:compatible}.  Both of these conditions are satisfied in several natural circumstances, and for a wide class of cones.

Our main Theorem \ref{thm:main-decay} is an excess decay dichotomy in the spirit of Simon, and is given in Section \ref{sec:main-thm}.  Here we list two consequences, which correspond to two classes of varifold admitting a no-hole condition.
\begin{theorem}[$\eps$-regularity for polyhedral cones]\label{thm:no-spine-reg}
Let $\refC^2 \subset \R^2 \subset \R^{2+k}$ be a polyhedral cone.  There are $\delta(\refC), \mu(\refC) \in (0, 1)$ so that if $M$ is an integral varifold with bounded (generalized) mean curvature $H_M$ and no boundary in $B_1$, satisfying
\begin{align}
\theta_M(0) \geq \theta_{\bC}(0), \quad \mu_M(B_1) \leq \frac{3}{2} \theta_\bC(0), \quad \int_{B_1} \mathrm{dist}(z, \bC)^2 d\mu_M + ||H_M||_{L^\infty(B_1)} \leq \delta^2,
\end{align}
then $\spt M \cap B_{1/2}$ is a $C^{1,\mu}$-perturbation of $\refC$.
\end{theorem}
We remark that the restriction $\bC_0 \subset \R^3$ is due to integrability: Theorem \ref{thm:no-spine-reg} holds for \emph{any} integrable polyhedral $\bC_0$, but we can only verify integrability for those nets in $\bbS^2$ (indeed, we feel integrability may be generally false in higher codimension)

A second class admitting the no-hole condition consists of varifolds with an associated orientation (i.e. current) structure.  These arise naturally as size- and cluster-minimizers, and we correspondingly have the following interior regularity theorem.
\begin{theorem}[Regularity of size-/cluster-minimizers]\label{thm:clusters}
Let $M^n$ be the support of the integral varifold associated to either a minimizing cluster in $U = \R^{n+1}$, or a homologically size-minimizing current in an open set $U$ (e.g. as constructed by Morgan \cite{morgan-size}).  Then we can decompose $M \cap U = M_n \cup M_{n-1} \cup M_{n-2} \cup M_{n-3}$ (disjoint union), where:
\begin{enumerate}
\item $M_{n}$ is a locally-finite union of embedded $C^{1,\alpha}$ $n$-manifolds;

\item $M_{n-1}$ is a locally-finite union of embeded, $C^{1,\alpha}$ $(n-1)$-manifolds, near which $M$ is locally diffeomorphic to $\bY^1 \times \R^{n-1}$;

\item $M_{n-2}$ is a locally-finite union of embedded, $C^{1,\alpha}$ $(n-2)$-manifolds, near which $M$ is locally diffeomorphic to $\bT^2 \times \R^{n-2}$;

\item $M_{n-3}$ is relatively closed, $(n-3)$-rectifiable, with locally-finite $\haus^{n-3}$-measure.
\end{enumerate}
Here $\bY^1$ is the stationary $1$-dimensional cone consisting of three rays, and $\bT^2$ is the stationary $2$-dimensional cone over the tetrahedral net in $\bbS^2$ (see Section \ref{sec:polyhedral} for precise definitions).
\end{theorem}

\begin{remark}
In either of the above cases, standard interior estimates and work of \cite{kns},  \cite{krummel} imply that $M_n$ and $M_{n-1}$ are analytic.  Contrarily, we suspect $C^{1,\alpha}$ may be sharp for $M_{n-2}$, as there exist Jacobi fields on $\bT^2$ which near $0$ are bounded in $C^{1,\alpha}$ but not $C^{2,\alpha}$.
\end{remark}

When $n = 2$ then Theorem \ref{thm:clusters} has been established (for general $(\mass, \eps, \delta$)-minimizing sets) by Taylor \cite{taylor}.  David \cite{david1}, \cite{david2} has given an entirely different proof of Taylor's Theorem, and has proven partial generalizations to higher codimension.

For general $n$, conclusions, 1), 2) are respectively consequences of Allard's \cite{All} and Simon's work \cite{simon1}.  Conclusion 4) follows from parts 1), 2), 3), and the work of Naber-Valtorta \cite{naber-valtorta}.  White \cite{white-announce} has announced a result analogous to Theorem \ref{thm:clusters} parts 2), 3) for general $(\mass, \eps, \delta)$-minimizing sets.  We mention that, in light of the essentially independent natural of general $(\mass, \eps, \delta)$-minimizing sets, and varifolds with bounded mean curvature, our main Theorem is likewise independent from the result asserted by White.

\vspace{5mm}

The very broad strategy of proof is to ``linearize'' the minimal surface operator over $\bC$, and use good decay properties of solutions to the linearized problem (called \emph{Jacobi fields}) to prove decay of the minimal surface.  In general the linear problem may not adequately capture the non-linear problem, and for this reason we must (as in \cite{simon1}) make two running assumptions: first, we require the polyhedral cone $\bC_0$ to be integrable (Definition \ref{def:integrable}), to ensure that every $1$-homogeneous Jacobi field can be realized ``geometrically'' through a family of rotations; second, we require the singular set of $M$ to satisfy a no-holes condition (Definition \ref{def:no-holes}), which prevents the tangent cone from ``gaining'' symmetries not seen in $M$.


Our proof follows Simon \cite{simon1}, but there several complications when dealing with polyhedral cylindrical cones.  Our main contributions are making sense of inhomogeneous blow-ups on cylindrical cones with \emph{singular cross section $\bC_0$}, correspondingly defining a good notion of Jacobi field on polyhedral cones, and extending the various non-concentration and growth estimates of Simon to the singular setting.  We additionally remove the ``multiplicity-one'' hypothesis from the excess decay Theorem (in both our result and Simon's), but we caution the reader that the structural results of Simon still require this hypothesis.

\vspace{5mm}

\textbf{Acknowledgments} We are grateful to Guido De Philippis for many interesting discussions and for introducing us to the problem.  We thank Spencer Becker-Kahn, Leon Simon, and Neshan Wickramasekera for several helpful conversations.  We wish to acknowledge the support of Gigliola Staffilani, whose grant allowed M.C. to visit MIT.  N.E. was supported by NSF grant DMS-1606492.  

\section{Notation and preliminaries}\label{sec:prelim}

Let us fix some notation.  We work in $\R^{n+k} = \R^{\ell + k} \times \R^m \ni (x, y)$.  We denote by capital letters $X \in \R^{n+k}$.  Write $r = |x|$, and $R = |X| = \sqrt{|x|^2 + |y|^2}$.  We shall always write $d_A(x)$ for the Euclidean distance function to a set $A$.  We write $B_r(A) = \{ x : d_A(x) < r \}$ for the open $r$-tubular neighborhood of $A$.  More generally, given a radius function $r_x : A \to \R$, we write $B_{r_x}(A) = A \cup \bigcup_{x \in A} B_{r_x}(x)$.

Given a linear subspace $V \subset \R^{n+k}$, we write $V^\perp$ to denote its orthogonal complement, and $\pi_V$ for the linear projection operator.  Given another linear space $W$, write $<V, W>^2 = \sum_{i, j} e_i \cdot f_j$ for the distance between $V$, $W$, where $\{e_i\}_i$, $\{f_j\}_j$ are choices of orthonormal basis on $V$, $W$.  The $\cdot$ always denotes Euclidean inner product.

We will be working with $n = (\ell + m)$-dimensional integral varifolds in $\R^{n+k} = \R^{(\ell+k)+m}$ with bounded mean curvature, and the reader should always think of them as having (almost-)symmetry in the $\{0\}\times \R^m$ factor.  Any cone $\bC$ will be a rotation of $\bC_0^\ell \times \R^m$ where $\bC_0$ is $\ell$-dimensional, stationary and either smooth or polyhedral (see Definition \ref{def:polyhedral-cone}).

Typically $M$ will denote a general integral varifold, and $\mu_M$ will be its mass measure.  Our integral varifolds will always have bounded mean curvature and no boundary in $B_1$.  This means there is a $\mu_M$-a.e. bounded vector field $H_M$, so that
\begin{gather}\label{eqn:first-variation}
\int_M div_M(Y) = - \int_M H_M \cdot X \quad \forall Y \in C^1_c(B_1, \R^{n+k}).
\end{gather}
Here $div_M(Y)$ is the tangential divergence, defined at $\mu_M$-a.e. point by $div_M(Y) = \sum_i e_i \cdot (D_{e_i} Y)$, for any orthonormal basis $\{e_i\}_i$ of $T_X M$.  Of course $M$ is \emph{stationary} if $H_M \equiv 0$.

We shall aways write $\theta_M(X, R)$ for the Euclidean density ratio in $B_R(X)$:
\begin{gather}
\theta_M(X, R) := r^{-n} \mu_M(B_R(X)).
\end{gather}
When $|H_M| \leq \Lambda_M$, and $M$ has no boundary in $B_1$, the $\theta_M(X, R)$ is almost-monotone in the sense that 
\begin{gather}\label{eqn:monotonicity}
e^{\Lambda_M R} \theta_M(X, R) \text{ is increasing for all $X \in B_1$ and $R < 1-|X|$.}
\end{gather}
In this case the density at $X$ is well-defined
\begin{gather}
\theta_M(X) := \lim_{r \to 0} \theta_M(X, R).
\end{gather}

By the monotonicity \eqref{eqn:monotonicity} any integral varifold having bounded mean curvature and no boundary can be identified with its support plus multiplicity, in the sense that
\begin{gather}
\int f d\mu_M = \int_{\spt M} f \theta d\haus^n
\end{gather}
for some $\mu_M$-measurable, integer-valued function $\theta$.  We shall make this identification.  In particular, we shall use the following shorthand:
\begin{gather}
\int_{M \cap A} f \equiv \int_A f d\mu_M, \quad d_M(x) \equiv d_{\spt M}(x), \quad \phi(M) \equiv \phi_\sharp M,
\end{gather}
Here $\phi : \R^{n+k} \to \R^{n+k}$ is some $C^1$ mapping, and $\phi_\sharp$ is the pushforward.

We write $\reg M$ for the set of points in $M$ for which $M$ locally coincides with a $C^{1,\alpha}$ graph, and $\sing M = M \setminus \reg M$.

We may further stratify $M$ using the quantitative strata of Cheeger-Naber \cite{cheeger-naber}.  We say a varifold cone $\bC$ is $m$-symmetric if it takes the form $q(\bC_0\times \R^m)$ for some $q \in SO(n+k)$.  The $m$-stratum of $M$ consists of points
\begin{gather}
S^m(M) = \{ X \in M : \text{ no tangent cone at $X$ is $(m+1)$-symmetric} \}.
\end{gather}

Fix a metric $d_{\cV}$ on the space of $n$-varifolds which induces varifold convegence.  We say $M$ is $(m,\eps)$-symmetric in some ball $B_r(x)$ if $d_{\cV}(r^{-1} (M - X) \llcorner B_1, \bC  \llcorner B_1) < \eps$ for some $m$-symmetric cone $\bC$.  The $(m, \eps)$-stratum then consists of points
\begin{gather}
S^m_\eps(M) = \{ X \in M : \text{ $M$ is \emph{not} $(\eps, m+1)$-symmetric in $B_r(X)$ for all $r < 1$} \}.
\end{gather}

We will often use the following local Holder-semi-norm.  Suppose $\bC$ is a cone, and $f : \Omega \subset \bC \to \bC^\perp$, and $(x, y) \in \bC \cap B_{1/2}$.  Then we define
\begin{gather}
[f]_{\alpha, \bC}(x, y) = \sup \left\{ \frac{|f(Z) - f(W)|}{|Z - W|^\alpha} : Z, W \in \Omega \cap B_{|x|/4}(x, y) \right\}.
\end{gather}
One can easily verify the following compactness: if $f_i : \bC_i \cap B_1 \to \bC_i^\perp$ is a sequence of functions, and $\bC_i \to \bC$ in $C^{1,\alpha}_{loc}$, and $[f_i]_{\alpha,\bC_i}$ is uniformly bounded on compact subsets, then after passing to a subsequence we have $f_i \to f : \bC \cap B_1 \to \bC^\perp$ in $C^{0,\alpha'}_{loc}$ for any $\alpha' < \alpha$.




\subsection{One-sided excess, holes}

We shall prove decay of the following \emph{one-sided} excess.
\begin{definition}
Let $M$ be an integrable varifold with bounded mean curvature, and $\bC$ a stationary integral varifold cone, and $\delta \in (0, 1]$.  Then we define
\begin{gather}
E_\delta(M, \bC, X, R) = R^{-n-2} \int_{M \cap B_R(X)} d_{\bC}^2 + \delta^{-1} R ||H_M||_{L^\infty(M \cap B_R)} .
\end{gather}
Notice $E$ is scale-invariant, in the sense that $E_\delta(M, \bC, X, R) = E_\delta(R^{-1}(M - X), R^{-1}(\bC - X), 0, 1)$.  When $\delta = 1$ we may write $E$ instead of $E_\delta$, and when $X = 0$ we may simply write $E_\delta(M, \bC, R)$.
\end{definition}

\begin{remark}
The factor of $\delta^{-1}$ effectively ``captures'' the region where $R ||H_M||_{L^\infty(B_R)}$ is much smaller than $L^2$-distance.  Our ultimate blow-up argument must work in this regime -- it cannot hope to see regions where $L^2$-excess is controlled by mean curvature, since in this case excess decay is dominated by the scaling of $H_M$.
\end{remark}

\begin{definition}
Take a fixed cone $\bC = \bC_0^\ell\times \R^m$, and $\eps > 0$.  We let $\cC_{\eps}(\bC)$ be the collection of cones
\begin{gather}
\cC_{\eps}(\bC) = \{ q(C) : q \in SO(n+k) \text{ with } |q - Id| \leq \eps \}.
\end{gather}

Define $\cN_\eps(\refC)$ to be the set of integral $n$-varifolds $M^n \subset \R^{n+k}$, having bounded mean curvature and no boundary in $B_1$, satisfying
\begin{gather}
0 \in M, \quad \mu_M(B_1) \leq \frac{3}{2} \theta_{\refC}(0), \quad E(M, \refC, 0, 1) \leq \eps^2.
\end{gather}
\end{definition}

In general, even an isolated singularity could potentially have a tangent cone with lots of symmetry.  To prove decay of $M$ towards a cone $\refC = \refC_0^\ell \times \R^m$ with a spine of singularities, we must, like in \cite{simon1}, impose a ``no-holes'' condition on $M$.  In certain circumstances the no-hole condition can be deduced for topological reasons (note that no lower density assumptions are made in the class $\cN_\eps$).
\begin{definition}\label{def:no-holes}
Take the cone $\refC^n = \refC_0^\ell\times \R^m$.  We say $M^{n}$ satisfies the \emph{$\delta$-no-holes condition} in $B_r$ w.r.t. $\refC$ if the following holds: for any $y \in B_r^m$, there is some $X \in B_\delta(0, y)$ with $\theta_M(X) \geq \theta_\refC(0)$.
\end{definition}


\subsection{Polyhedral cones}\label{sec:polyhedral}

We are concerned with the following types of cones.
\begin{definition}\label{def:polyhedral-cone}
A $2$-dimensional cone $\bC_0^2 \subset \R^{2+k}$ is \emph{polyhedral} if:
\begin{enumerate}
\item[A)] $\bC_0$ is the cone over some multiplicity-1 geodesic net in $\bbS^{1+k}$, having the property that every junction has precisely three edges meeting at $120^\circ$ (nets with this property are sometimes called \emph{equiangular}), and

\item[B)] the cone $\bC_0$ has no additional symmetries, i.e. we cannot write $\bC_0 = q(\bC_0' \times \R)$ for some $q \in SO(2 + k)$, and some $1$-dimensional cone $\bC_0'$.
\end{enumerate}
We shall often say a cone $\bC_0^2 \times \R^m$ is polyhedral if $\bC_0$ is polyhedral.
\end{definition}

The equiangular geodesic nets in $\bbS^2$ are completely classified, and there are $10$ of them.  However, by our Definition \ref{def:polyhedral-cone} only $8$ of these nets give rise to polyhedral cones.  For a comprehenive list see Section \ref{sec:int}.  Let us remark that, from the work of \cite{AllAlm2}, any integer-multiplicity geodesic net in $\bbS^{1+k}$ (with finite mass) consists of only finitely many geodesic arcs.

We bring the readers attention to two important (non-)examples.  Define $\bY^1 \subset \R^2$ to be the cone consisting of three rays meeting at $120^\circ$:
\begin{gather}
\bY^1 = \{ (x, 0) : x \geq 0 \} \cup \{ (x, -\sqrt{3} x) : x \leq 0 \} \cup \{ (x, \sqrt{3}) : x \leq 0 \} .
\end{gather}
The cone $\bY^1 \times \R$ arises from the geodesic net consisting of three half-great-circles meeting at $120^\circ$.  Though of fundamental importance in this paper, cones $\bY^1 \times \R$ and $\R^2$ are \emph{not} considered polyhedral cones.

Define $\bT^2 \subset \R^3$ to be the cone over the tetrahedral net: $\bT^2 \cap \bbS^2$ is the equiangular net having vertices
\begin{gather}
(1, 0, 0), \quad (-1/3, 2\sqrt{2}/3, 0), \quad (-1/3, -\sqrt{2}/3, \sqrt{6}/3), \quad (-1/3, -\sqrt{2}/3, -\sqrt{6}/3).
\end{gather}
The tetrahedral cone $\bT^2$ is the archtype of polyhedral cone, and is the polyhedral cone of least density in $\R^3$.  It would be interesting to know whether $\bT^2$ is the least density polyhedral cone in any codimension.

\begin{remark}\label{rem:looks-like-Y}
A very important fact is that if $\bC = \bC_0^2 \times \R^m$ is polyhedral, then away from the axis $\axis$ a small neighborhood is (up to rigid motion) either flat $\R^{2+m}$ or the cone $\bY^1 \times \R^{1+m}$.
\end{remark}

\vspace{5mm}

The cone $\bY^1\times \R^{m}$ we shall decompose into three half-planes $H(1) \cup H(2) \cup H(3)$, and write $Q(i)$ for the $m$-plane containing $H(i)$, $n(i)$ for the outer conormal of $\partial H(i) \subset Q(i)$.

To adequately parameterize polyhedral surfaces and cones we require some further notation.  We call a subset of the form
\begin{equation}\label{eqn:wedge-example}
W = \{ re^{i\theta} \in \R^2 : \theta \in [\theta_0, \theta_1] \text{ and } r \in [0, \infty) \} \subset \R^2
\end{equation}
a wedge.  Given a plane $P^2 = q(\R^2) \subset \R^{n+k}$, for $q \in O(n+k)$, a subset $W \subset P^2$ is wedge if $q^{-1}(W) \subset \R^2$ is a wedge.  We shall write $\mathrm{int} W$ for the ``interior'' points
\begin{gather}
\mathrm{int} W = \{ r e^{i\theta} : \theta \in (\theta_0, \theta_1) \text{ and } r \in (0, \infty) \},
\end{gather}
and $\partial W$ for the ``boundary'' points
\begin{gather}
\partial W = \{ r e^{i\theta} : \theta \in \{\theta_0, \theta_1\} \text{ and } r \in [0, \infty)\}.
\end{gather}

We shall decompose our polyhedral cone $\bC_0$ into a union of wedges $W(1), \ldots, W(d)$, meeting along a collection of lines $L_1, \ldots, L(2d/3)$.  Let us write $P(i)$ for the $2$-plane containing $W(i)$, and $n(i)$ for the outer conormal of $\partial W(i)$ in $P(i)$.  A function $v : \bC_0 \to \bC_0^\perp$ is interpreted as a collection of functions $v(i) : W(i) \to P(i)^\perp$.

When dealing with polyhedral cones, it will be convenient to have a notion of annulus which is flat near the junctions.  Given a wedge $W \subset \R^2$ as in \eqref{eqn:wedge-example}, define the star-shaped curve $S_W$ by letting
\begin{gather}
r(\theta) = \left\{ \begin{array}{ l l} 
 \frac{1}{\cos(\theta - \theta_0)}  & \theta \in [\theta_0 - (\theta_1-\theta_0)/4, \theta_0 + (\theta_1 - \theta_0)/4] \\
 \frac{1}{\cos(\theta - \theta_1)} & \theta \in [\theta_1 - (\theta_1 - \theta_0)/4, \theta_1 + (\theta_1 - \theta_0)/4] \\
 \frac{1}{\cos((\theta_1 - \theta_0)/4)} & else \end{array} \right.
\end{gather}
For all intents and purposes $S_W$ is a circle, but because $S_W$ is linear in a neighborhood of $\partial W$, our lives are simplified when dealing with domains whose boundaries are graphs over $\partial W$.  We have
\begin{equation}\label{eqn:circle-inclusions}
S_W \subset B_2 \setminus B_1
\end{equation}

Let us correspondingly define the annular domain
\begin{gather}
A_W(r_1, r_2) = \bigcup_{r_1 < r < r_2} r S_W.
\end{gather}
As before, $A_W(r_1, r_2)$ is essentially just a round annulus, but is slightly adjusted to fit $W$ better.  If $W \subset P^2 = q(\R^2)$ is a wedge, then we define $A_W := q(A_{q^{-1}W})$ in the obvious way.  As in \eqref{eqn:circle-inclusions}, we have
\begin{equation}\label{eqn:wedge-inclusions}
(B_{r_2} \setminus B_{2r_1}) \cap W \subset A_W(r_1, r_2) \subset (B_{2r_2} \setminus B_{r_1}) \cap W.
\end{equation}


\subsection{Compatible Jacobi fields}\label{sec:compatible}

Let us consider the following model scenario: fix a polyhedral cone $\bC^n = \bC_0^2\times \R^m$, and take $M^n_t$ to be a $1$-parameter family of minimal surfaces, continuous in the varifold distance, satifying $M_0 = \bC$.  For any $\tau > 0$ and $t$ sufficiently small (depending on $\tau$), the $M_t \setminus B_\tau(\axis)$ are graphical over $\bC$ by a function $u_t$, in some suitable sense (see Lemma \ref{lem:poly-graph}).

We can define the initial velocity $v = \partial_t u_t$ as a function $v : \bC \to \bC^\perp$.  The resulting $v$ will satisfy $\Delta v = 0$ on each wedge, and certain compatibility conditions on the junction lines.  This PDE system is a notion of \emph{linearization} of the mean curvature operator over $\bC$ (which itself we do not explicitly define).  We call such a $v$ a compatible Jacobi field.

We shall see in Section \ref{sec:blow-up} how general inhomogeneous blow-up sequences give rise to compatible Jacobi fields.

\begin{definition}\label{def:compatible}
Let $\bC = \bC_0^2\times\R^m$ be a polyhedral cone.  We say $v : \refC \cap B_1 \to {\refC}^\perp$ is a \emph{compatible Jacobi field} on $\bC \cap B_1$ if it satisfies the following conditions:

\begin{enumerate}
\item[A)] For each $i$, $v(i)$ is smooth on $((W(i)\setminus\{0\})\times \R^m) \cap B_1$, and satisfise $\Delta v(i) = 0$.

\item[B)] (``$C^0$ compatibility'') For every $z \in ((\partial W(i) \setminus \{0\}) \times \R^m) \cap B_1$, there is a vector $V(z) \in \R^{2+k}$ (\emph{independent} of $i$) so that 
\begin{gather}
v(i)(z) = \pi_{P(i)^\perp}(V(z)).
\end{gather}

\item[C)] (``$C^1$ compatibility'') If $W(i_1)$, $W(i_2)$, $W(i_3)$ share a common edge $\partial W(i_1)$, then
\begin{gather}
\sum_{j=1}^3 \partial_n v(i_j)(z) = 0 \quad \forall z \in \left( (\partial W(i_1) \setminus \{0\}) \times \R^m\right) \cap B_1.
\end{gather}
\end{enumerate}
We say a compatible Jacobi field $v$ is \emph{linear} if there are skew-symmetric matrices $A(i) : \R^{n+k} \to \R^{n+k}$ so that $v(i) = \pi_{P(i)^\perp} \circ A(i)$.  Notice we \emph{do not} require the $A(i)$ to coincide.
\end{definition}

As outlined in the Introduction, we wish to use the decay properties of compatible Jacobi fields to prove excess decay on $M$.  There is a catch however, which is illustrated in the following example: let $M_t$, $\bC$, and $v$ be as in the previous example.  Let $q_t = \exp(tA) \in SO(n+k)$ be a $1$-parameter family of rotations generated by the skew-symmetric matrix $A$. 

Then one can easily verify the initial velocity of the family $M_t$ as graphs over $q_t(\bC)$ is now $v - \pi_{\bC^\perp} \circ A$, which decays at most linearly, and in particular is insufficient to give any kind of excess decay.  We need to know that, by choosing ``good'' reference cones in our blow-up sequence, we can always eliminate first-order growth in the limiting Jacobi field.

As in \cite{AllAlm} and \cite{simon1}, we require an \emph{integrability} condition on our cross sectional cones $\bC_0$, which for us simply asks that every $1$-homogeneous Jacobi field arises from a family of rotations, like the above example.

\begin{definition}\label{def:integrable}
We say a polyhedral cone ${\refC_0}^2 \subset \R^{2+k}$ is \emph{integrable} if every linear, compatible Jacobi field $v : \refC_0 \to {\refC_0}^\perp$ takes the form
\begin{gather}
v = \pi_{{\refC}^\perp} \circ A,
\end{gather}
for some skew-symmetric matrix $A : \R^{2+k} \to \R^{2+k}$.
\end{definition}


\begin{remark}\label{rem:locally-int}
By Proposition \ref{prop:baby-linear},  any $1$-homogeneous compatible Jacobi field on $\refC_0$ is linear in the sense that each component $v(i)$ is the restriction of some skew-symmetric matrix $A(i)$.  Integrability in this definition means that all the $A(i)$'s match up to generate a \emph{global} rotation of $\refC_0$.
\end{remark}

\begin{remark}\label{rem:maybe-non-int}
The polyhedral cones we are most interested in (those arising from equiangular geodesic nets in $\bbS^2$) are integrable in the sense of Definition \ref{def:integrable}.  We prove this in Section \ref{sec:nets}.

We note however that our notion of integrability is \emph{stronger} than the ``usual'' definition (of \cite{AllAlm}, \cite{simon1}), which simply requires every $1$-homogeneous Jacobi field to arise from a $1$-parameter family of stationary cones.

Indeed, although any $1$-homogeneous compatible Jacobi field on a polyhedral cone is \emph{locally} generated by a rotation (Proposition \ref{prop:baby-linear}), it seems plausible to us that there may exist $1$-parameter families of connected, equiangular geodesic nets in some $\bbS^{2+k}$ which are not \emph{global} rotations.  Of course disconnected equiangular net are trivially not integrable by our definition.

We have chosen to write this paper using rotations, but (like in \cite{simon1}), the methods carry over directly to the more general notion of integrability.  See, for example, Remark \ref{rem:blow-up-rot}.
\end{remark}

\begin{remark}\label{rem:general-non-int}
It is also not clear to us that every linear Jacobi field need arise from a $1$-parameter family of nets, being global rotations or otherwise.  That is, there may be non-integrable polyhedral cones even in the more general sense of integrable.
\end{remark}

\vspace{5mm}

It will be convenient to define a general notion of inhomogeneous blow-up sequence.  The following defines sufficient conditions to inhomogeneously blow-up $M_i$ over $\bC_i$ at scale $\beta_i$, so as to obtain a compatible Jacobi field with ``good'' properties (see Proposition \ref{prop:blow-up}).
\begin{definition}
Let $\eps_i, \beta_i$ be two sequences of numbers $\to 0$.  We say $(M_i, \bC_i, \eps_i, \beta_i)$ is a \emph{blow-up sequence w.r.t. $\refC$} if the following holds:
\begin{enumerate}
\item[A)] Each $M_i \in \cN_{\eps_i}(\bC)$, and $\bC_i \in \cC_{\eps_i}(\refC)$;

\item[B)] Each $M_i$ satisfies the $\eps_i$-no-holes condition w.r.t $\refC$ in $B_1$; 

\item[C)] We have $\limsup_i \beta_i^{-2} E_{\eps_i}(M_i, \bC_i, 0, 1) < \infty$.
\end{enumerate}
\end{definition}

\begin{remark}
This last condition ensures that we can inhomogeneously scale the graph by size $\beta_i^{-1}$, and still have uniform $C^{1,\alpha}$ and $L^2$ bounds.
\end{remark}

\begin{remark}
In many cases we will simply take $\bC_i \equiv \refC$, but allowing for slightly tilted $\bC_i$ is what enables us to kill $1$-homogeneous terms in the resulting Jacobi field.  In general it may not hold that $\limsup_i \beta_i^{-1} \eps_i < \infty$.
\end{remark}

\section{Main Theorems}\label{sec:main-thm}

Our main decay Theorem is the following.  Recall the Definition \ref{def:integrable} of integrability.
\begin{theorem}[Excess decay]\label{thm:main-decay}
Take $\refC = \refC_0^2 \times \R^m$, where $\refC_0^2 \subset \R^{2+k}$ is an integrable polyhedral cone, and take $\theta > 0$.  There are numbers $\delta(\refC, \theta)$, $c(\refC)$, $\gamma(\refC, \theta)$, $\mu(\refC)$ so that the following holds: Suppose $M^{2+m}$ is an integral varifold, with bounded mean curvature and no boundary in $B_1$, satisfying
\begin{gather}
0 \in M, \quad \mu_M(B_1) \leq \frac{3}{2}\theta_\refC(0), \quad E_\delta(M, \refC, 1) \leq \delta^2, \\
\text{and the $\delta$-no-holes condition in $B_{1/2}$ w.r.t. $\refC$.}
\end{gather}
Then we can find a rotation $q \in SO(n+k)$ with $|q - Id| \leq \gamma E_\delta(M, \refC, 1)^{1/2}$, so that
\begin{gather}
E_\delta(M, q(\refC), \theta) \leq c \, \theta^\mu E_\delta(M, \refC, 1).
\end{gather}

Note that $\mu$ and $c$ are \emph{independent} of $\theta$.  In particular, we deduce that for $\delta(\refC)$ sufficiently small, we have
\begin{gather}\label{eqn:main-decay-half}
E_\delta(M, q(\refC), \theta) \leq \frac{1}{2} E_\delta(M, \refC, 1).
\end{gather}
\end{theorem}

\begin{remark}
Though we state and prove all our results for bounded mean curvature, they continue to hold with minor modifications for integral varifolds with mean curvature in $L^p$, provided $p > n$.
\end{remark}

An important special case of Theorem \ref{thm:main-decay} is when $m = 0$, where the no-holes condition becomes simply the requirement that \emph{some} point of the correct density exists.  Note we do not assume any kind of minimizing quality to $M$.
\begin{corollary}\label{cor:no-spine-reg}
Let $\refC^2 \subset \R^{2+k}$ be an integrable polyhedral cone.  For example, suppose $\refC^2 \subset \R^3 \subset \R^{2+k}$.  There are $\delta(\refC), \mu(\refC) \in (0, 1)$ so that if $M^2 \in \cN_\delta(\refC)$ satisfies $\theta_M(0) \geq \theta_\refC(0)$, then $M \cap B_{1/2}$ is a $C^{1,\mu}$-perturbation of $\refC$.
\end{corollary}

For certain classes of varifolds we can deduce the no-holes condition whenever $M$ is sufficiently close to $\bC$ in excess.  One important way the no-holes condition arises is by imposing a boundary/orientability structure.  If $T$ is an integral $n$-current, we can write its action on $n$-forms $\omega$ as
\begin{gather}\label{eqn:current-action}
T(\omega) = \int_{M_T} <\omega, \tau_T> \theta_T \haus^n,
\end{gather}
where $M_T$ is some $n$-rectifiable set, $\theta_T$ is a positive, integer-valued, $\haus^n \llcorner M_T$-integrable function, and $\tau_T$ is a $\haus^n \llcorner M_T$-measurable choice of $n$-orientation.

\begin{definition}
Given an open set $U$, we say an integral varifold $V$ has an \emph{associated cycle structure in $U$} if there is a countable collection of integral $n$-currents $T_1, T_2,\ldots$, each without boundary in $U$, so that
\begin{gather}
\mu_V \llcorner U = (\theta\, \haus^n \llcorner  \bigcup_{i=1}^\infty M_{T_i}) \llcorner U 
\end{gather}
where $\theta$ is some \emph{positive}, integer-valued, $\haus^n \llcorner \bigcup_{i=1}^\infty M_{T_i}$-measurable function.
\end{definition}

Varifolds with cycle structure arise naturally when constructing size-minizers, clusters, and more generally $(\mass, \eps, \delta)$-minimizers.  See the following Section \ref{sec:clusters} for details.  For codimension-one varifolds having a cycle structure, we can prove the following (again we note that no minimizing property of $M$ is required).

\begin{theorem}\label{thm:spine-reg}
There are constants $\delta(m), \mu(m) \in (0, 1)$ so that the following holds.  Let $M^{2+m} \subset \R^{3+m}$ be a varifold with an associated cycle structure in $B_1$, and suppose $M \in \cN_{\delta}(\bT^2 \times \R^m)$.  Then $M \cap B_{1/2}$ is a $C^{1,\mu}$ perturbation of $\bT\times \R^m$.
\end{theorem}

\subsection{Clusters and size-minimizers}\label{sec:clusters}

The most dramatic application of our regularity Theorem is seen in (certain classes of) $(\mass,\eps, \delta)$-minimizing sets, as in this case we have a very good classification of tangent cones due to Taylor \cite{taylor}.  For cluster minimizers and size-minimizing currents we can establish $C^{1,\alpha}$-structure of the $n$, $(n-1)$, and $(n-2)$-strata, and thereby (using results of Naber-Valtorta \cite{naber-valtorta}) give finite Lipschitz structure on the $(n-3)$-stratum.

We first give some background definitions and theorems.  First we define precisely the notion of $(\mass, \eps, \delta)$-minimizing set, in the sense of Almgren.
\begin{definition}
Let $U$ be an open set, and $\eps(r) = C r^\alpha$ for some constants $C$, $\alpha > 0$.  A set $S$ is an $n$-dimensional \emph{$(\mass, \eps, \delta)$-minimizer in $U$} if the following hold:
\begin{enumerate}
\item[A)] $S = (\spt \haus^n \llcorner S) \cap U$;

\item[B)] given any ball $B_r(x) \subset U$ with $r < \delta$, and any Lipschitz map $\phi : B_r(x) \to B_r(x)$ satisfying $\spt (\phi - Id) \subset B_r(x) \cap U$, we have
\begin{gather}
\haus^n(\phi(S) \cap B_r(x)) \leq (1 + \eps(r)) \haus^n(S \cap B_r(x)).
\end{gather} 
\end{enumerate}
\end{definition}

In this paper we shall only deal with $(\mass, \eps, \delta)$-minimizers having an associated cycle structure.  There are two classes in particular we shall consider.

\subsubsection{Size-minimizers}

If $T$ is a rectifiable $n$-current, then in the notation of \eqref{eqn:current-action} the \emph{size} of $T$ is given by $\size(T) = \haus^n(M_T)$.  Given an open set $U$, we say $T$ is \emph{homologically size-minimizing in $U$} if $\size(T)\leq \size(T + S)$ for any rectifiable $n$-current $S$ supported in $U$, with $\partial S = 0$.

Given a size-minimizing current $T$ in $U$, then in $U$ its underlying multiplicity-$1$ varifold is stationary, and its support $(\mass, 0, \infty)$-minimizing.  Morgan has demonstrated the following existence Theorem for size-minimizing currents.
\begin{theorem}[\cite{morgan-size}]
Let $B$ be an $(n-1)$-dimensional compact oriented submanifold of the unit sphere in $\R^{n+1}$.  Then there exists a integral $n$-current $T$ with $\partial T = B$, which is size-minimizing in $\R^n\setminus B$.
\end{theorem}

\subsubsection{Clusters}

Given a natural number $N$, an \emph{$N$-cluster} $\clus$ in $\R^{n+1}$ is a partition of $\R^{n+1}$ of disjoint sets $\clus(0), \clus(1), \ldots, \clus(N)$ of finite-perimeter, satisfying $\clus(0) = \R^n \setminus (\clus(1) \cup \ldots \cup \clus(N))$.  Typically the sets $\clus(1), \ldots, \clus(N)$ are understood to be bounded.  We define the volume vector and perimeter scalar as (resp.)
\begin{gather}
\mass(\clus) = ( |\clus(1)|, \ldots, |\clus(i)| ) \in \R^N, \quad P(\clus) = \frac{1}{2} \sum_{i=0}^N \haus^n(\partial [\clus(i)]).
\end{gather}
Here $|\clus(i)| = \leb^{n+1}(\clus(i))$ is the $(n+1)$-volume, and $\haus^n(\partial [\clus(i)]) \equiv \mass(\partial [\clus(i)])$ is the mass of the reduced boundary.  Of course $|\clus(0)| = \infty$.

Given a volume vector $\mathbf{m} \in \R^N$, a \emph{minimizing cluster for $\mathbf{m}$} is an $N$-cluster which realizes the infimum
\begin{gather}
\inf \{ P(\clus) : \text{$\clus$ is an $N$-cluster in $\R^{n+1}$ with $\mass(\clus) = \mathbf{m}$} \}.
\end{gather}
In other words, a minimizing cluster is a solution to the isoperimetric problem of $N$ regions of prescribed volume.  Almgren proved the following existence Theorem for minimizing clusters (see also the modern presentation \cite{maggi}).
\begin{theorem}[\cite{Alm-eps-delta}]\label{thm:background-clusters}
Given any positive volume vector $\mathbf{m} \in \R^N$ (so, each $m_h > 0$), then there is a minimizing $N$-cluster for $\mathbf{m}$ enjoying the following properties:
\begin{enumerate}
\item Each set $\clus(1), \ldots, \clus(N)$ is bounded; 
\item The associated set $\partial \clus(1) \cup \ldots \cup \partial \clus(N)$ is $(\mass, Kr, \delta)$-minimizing, for some constants $K$, $\delta$; 
\item The associated varifold $\haus^n \llcorner (\partial \clus(1) \cup \ldots \cup \partial\clus(N))$ has bounded mean curvature, and no boundary.
\end{enumerate}
Here $\partial \clus(i)$ denotes the \emph{topological} boundary of $\clus(i)$.
\end{theorem}

\begin{remark}
Conclusion 3) is not explicitly stated in \cite{Alm-eps-delta}, \cite{maggi}, but follows directly from \cite[Theorem VI.2.3]{Alm-eps-delta} or \cite[Theorem IV.1.14]{maggi}.  For the reader's convenience we include a proof of part 3) in Section \ref{sec:corollaries}.
\end{remark}

\subsubsection{Interior regularity}

We prove the following general interior regularity theorem, from which Theorem \ref{thm:clusters} is an immediate consequence.
\begin{theorem}\label{thm:main-reg}
Let $M^n = \haus^n \llcorner \spt M$ be a varifold in an open set $U \subset \R^{n+1}$.  Suppose that, in $U$: $M$ has an associated cycle structure, no boundary, bounded mean curvature, and $\spt M$ is $(\mass, \eps, \delta)$-minimizing.

Then \emph{in $U$} we have the following structure:
\begin{enumerate}
\item $S^n(M) \setminus S^{n-1}(M)$ is a locally-finite union of embedded $C^{1,\alpha}$ $n$-manifolds;

\item $S^{n-1}(M) \setminus S^{n-2}(M)$ is a locally-finite union of embeded, $C^{1,\alpha}$ $(n-1)$-manifolds, near which $M$ is locally diffeomorphic to $\bY \times \R^{n-1}$;

\item $S^{n-2}(M) \setminus S^{n-3}(M)$ is a locally-finite union of embedded, $C^{1,\alpha}$ $(n-2)$-manifolds, near which $M$ is locally diffeomorphic to $\bT \times \R^{n-2}$;

\item $S^{n-3}(M)$ is relatively closed, $(n-3)$-rectifiable, with locally-finite $\haus^{n-3}$-measure.
\end{enumerate}

\end{theorem}

\begin{remark}
Theorem \ref{thm:main-reg} holds for any class of $n$-dimensional $(\mass, \eps, \delta)$-minimizers in $\R^{n+1}$, whose associated (multiplicity-one) varifolds have bounded mean curvature, and satisfy a no-holes condition like Proposition \ref{prop:no-holes}.  We could only verify this condition for minimizers having a cycle structure, but it seems plausible one could prove this for more general classes.  See Remark \ref{rem:general-no-holes}.

An obstacle to extending Theorem \ref{thm:main-reg} to higher \emph{co}dimension $\R^{n+k}$ is whether the tetrahedron $\bT^2$ continues to have the least density of any polyhedral cone in $\R^{n+k}$.
\end{remark}

\subsection{Outline of Proof}

The basic idea, which harks back to methods pioneered by De Giorgi, and implemented first more-or-less in this form by Allard-Almgren \cite{AllAlm}, is to use good decay properties of solutions to the \emph{linearized} minimal surface operator over $\bC$ (i.e. Jacobi fields), to prove decay of minimal surfaces close to $\bC$: if we write $M$ as a ``graph'' over $\bC$ by a function $u$, then as $u$ becomes very small it starts to act like a Jacobi field on $\bC$.

We will argue by contradiction.  Let us outline the proof.  For simplicity assume $H_M \equiv 0$.  If, towards a contradiction, the excess decay \eqref{eqn:main-decay-half} failed, we would have a sequence of numbers $\eps_i \to 0$, and minimal surfaces $M_i \in \cN_{\eps_i}(\bC)$, each satisfying the $\eps_i$-no-holes condition w.r.t. $\bC$ in $B_{1/2}$, so that
\begin{gather}\label{eqn:intro-hyp}
\theta^{-n-2} \int_{M \cap B_\theta} d_{q(\bC)}^2 \geq \frac{1}{2} \int_{M \cap B_1} d_{\bC}^2 =: \beta_i^2  \quad \forall q \in SO(n+k) 
\end{gather}

For any $\tau > 0$, and $i >> 1$, we can write $M_i \cap B_1 \setminus B_\tau(\axis)$ as graph over $\bC$ by the function $u_i$ (in a suitable sense, see Section \ref{sec:graph}), where $u_i \to 0$ and
\begin{gather}
\int_{\dom(u_i) \subset \bC} |u_i|^2 \leq (1+o(1))\int_{M \cap B_1} d_{\bC}^2
\end{gather}
The rescaled graphs $\beta_i^{-1} u_i$ have uniform $L^2$ and $C^{1,\alpha}$ bounds, and we can make sense of the limit $\beta^{-1}_i u_i \to v$ as a compatible Jacobi field $v : \bC \cap B_{1/2} \to \bC^\perp$ (see Section \ref{sec:blow-up}, and recall Definition \ref{def:compatible}).

We would then like to make two assertions:
\begin{enumerate}
\item For any ball $B_\rho$, with $\rho \leq 1/4$, we have strong $L^2$ convergence
\begin{gather}\label{eqn:intro-fake-1}
\beta_i^{-2} \int_{M_i \cap B_\rho} d_{\bC}^2 \to \int_{\bC \cap B_\rho} |v|^2.
\end{gather}

\item For any $\theta \leq 1/4$, we have decay
\begin{gather}\label{eqn:intro-fake-2}
\theta^{-n-2} \int_{\bC \cap B_\theta} |v|^2 \leq c(\bC)\theta^\mu \int_{\bC \cap B_{1/2}} |v|^2.
\end{gather}
\end{enumerate}
If both these claim were true, then for $i >> 1$ we could contradict \eqref{eqn:intro-hyp} with $q = id$, and $\theta(\bC)$ sufficiently small.

The first assertion is true but highly non-trivial.  The issue is the non-graphicality of $M_i$ near the spine $\axis$, where (rescaled) $L^2$ distance may accumulate in the limit.  To rule this out we prove a non-concentration estimate like in \cite{simon1} (equation \eqref{eqn:intro-non-conc}), which uses very strongly the no-holes condition.

The second assertion is in general false, even for toy examples like when $\bC$ is a plane.  While it is  true that $v$ grows \emph{at least} $1$-homogeneously (loosely a consequence of scaling), $v$ may have a non-zero $1$-homogeneous component, which would preclude an estimate like \eqref{eqn:intro-fake-2}.  The problem is partly that we may have chosen the wrong cone $\bC$ (e.g. if $M$ were smooth, we would want to pick $\bC$ to be the tangent space at $0$; see also the example of Section \ref{sec:compatible}), but a deeper issue is that, for general $\bC$ there may (and do) exist $1$-homogeneous Jacobi fields on $\bC$ that do not arise geometrically as initial velocities.

Here we use the \emph{integrability} condition on $\bC_0$, which allows us to always select ``good'' cones $q_i(\bC)$, so that if we repeat the above blow-up procedure with $q_i(\bC)$ in place of $\bC$, we can kill the $1$-homogeneous component of the limiting field (Proposition \ref{prop:kill-linear}).  We end up with a decay of the ``non-linear component'' of $v$ (Theorem \ref{thm:linear-decay}).

The ``corrected'' assertions, which still contradict \eqref{eqn:intro-hyp} for $i >> 1$, are:
\begin{enumerate}
\item If we write $v_\theta$ for the component of $v$ that is $L^2(\bC \cap B_\theta)$-orthogonal to the linear fields on $\bC$, then there is a sequence of rotations $q_i$ so that
\begin{gather}\label{eqn:intro-real-1}
\beta_i^{-2} \int_{M_i \cap B_\theta} d_{q_i(\bC)}^2 \to \int_{\bC \cap B_\theta} |v_\theta|^2,
\end{gather}

\item We have the decay
\begin{gather}\label{eqn:intro-real-2}
\theta^{-n-2} \int_{\bC \cap B_\theta} |v_\theta|^2 \leq c(\bC) \theta^\mu \int_{\bC \cap B_{1/2}} |v|^2.
\end{gather}
\end{enumerate}

Let us outline the structure of the paper, and provide some insight into each section.

\subsubsection{Graphicality}

We demonstrate in this section that when $M \in \cN_\eps(\bC)$, with $\eps(\tau, \beta, \bC)$ sufficiently small, then $M \cap B_{3/4} \setminus B_\tau(\axis)$ decomposes as graphical pieces over $\bC$, with scale-invariant $C^{1,\alpha}$ norm controlled by $\beta$.  Most importantly, we show effective estimates on both the graphical and non-graphical parts.

Precisely, we show in Lemma \ref{lem:poly-graph} the following kind of decomposition: there are domains $\Omega(i) \subset P(i) \times \R^m$, each a perturbation of the wedge $W(i)\times \R^m$, and functions $u(i) : \Omega(i) \to (P(i)\times \R^m)^\perp$, so that
\begin{gather}\label{eqn:intro-graph-1}
M(0) := M \cap B_{3/4} \setminus \bigcup_{i=1}^d u(i)(\Omega(i)) \subset B_\tau(\axis).
\end{gather}
This by itself is a straightforward contradiction argument, using Simon's and Allard's regularity Theorems, and the ``irreducibility'' of integrable polyhedral cones (see Section \ref{sec:mult-one}).

The more involved part is establishing effective \emph{global} estimates, for example
\begin{gather}\label{eqn:intro-graph-2}
\int_{M(0)} r^2 + \sum_{i=1}^d \int_{\Omega(i)} r^2 |Du(i)|^2 \leq c(\bC) \int_{M \cap B_1} d_{\bC}^2,
\end{gather}
and, if we write $f(i)$ for the functions defining $\partial\Omega(i)$ as graphs over $\partial W(i)\times \R^m$, then
\begin{gather}\label{eqn:intro-graph-3}
\sum_{i=1}^d \int_{\partial W(i)\times \R^m} r |f(i)|^2 \leq c(\bC) \int_{M\cap B_1} d_{\bC}^2.
\end{gather}

Estimates \eqref{eqn:intro-graph-2}, \eqref{eqn:intro-graph-3} are crucial in controlling density excess by $L^2$ excess (Proposition \ref{prop:density-est}).  Note the RHS is independent of $\tau$: this is because both sides scale the same way, which allows us to sum up local estimates from Allard's or Simon's regularity Theorems.

The strategy to prove these is to start with a non-effective graphical decomposition, of the form \eqref{eqn:intro-graph-1}, and then by a further contradiction argument ``push'' the region of graphicality towards the spine until either: we hit the spine (!), or a localized $L^2$ excess passes some threshold.  This is the content of Lemma \ref{lem:poly-tiny-graph}.

The scheme is similar to \cite{simon1}, but more involved, and we draw the reader's attention to two particular differences: first, the singular nature of the cross-section of the cone requires additional structure and estimates (e.g. \eqref{eqn:intro-graph-3}); second, we can remove Simon's requirement of $M$ lying in some multiplicity-one class (both in our case and his original setting when $\bC_0$ is smooth).

\subsubsection{$L^2$ estimates}

Here we prove key $L^2$ estimates on $M$ and the $u(i)$ of decomposition \eqref{eqn:intro-graph-1}, which guarantee strong $L^2$ convergence and decay of the Jacobi field (minus its linear part).  Various intermediate steps are involved, but the crucial estimates at the end of the day are the following (Theorem \ref{thm:l2-est}): provided $M \in \cN_\eps(\bC)$ satisfies the $\tau/10$-no-holes condition w.r.t. $\bC$ in $B_{1/4}$, and $\eps(\tau, \bC)$ is small, and $\alpha \in (0, 1)$, then
\begin{align}
&\sum_{i=1}^d \int_{\Omega(i) \cap B_{1/10}\setminus B_\tau(\axis)} R^{2-n} |\partial_R (u(i)/R)|^2 \leq c(\bC)\int_{M \cap B_1} d_{\bC}^2 , \label{eqn:intro-hardt-simon}\\
&and \quad \int_{M \cap B_{1/4}} \frac{d_{\bC}^2}{\max(r, \tau)^{2-\alpha}} + \sum_{i=1}^d \int_{\Omega(i) \cap B_{1/4} \setminus B_\tau(L\times \R^m))} \frac{|u(i) - \kappa^\perp|^2}{\max(r, \tau)^{2+2-\alpha}} \label{eqn:intro-non-conc}\\
&\quad\quad\quad \leq c(\bC, \alpha) \int_{M \cap B_1} d_{\bC}^2 .
\end{align}
where $L = \cup_{i=1}^{2d/4} L(i)$ are the singular lines of $\bC_0$, and $\kappa$ is a piecewise-constant function which forms a discrete approximate-parameterization of the singular set of $M$ over the spine $\axis$.

Estimate \eqref{eqn:intro-hardt-simon} says that the blow-up limit field $v$ must grow at least $1$-homogeneously in $R$, and is a key component in proving super-linear growth of $v$ minus-its-linear-part.  Estimate \eqref{eqn:intro-non-conc} is a non-concentration estimate for $L^2$ excess, and gives a growth bound on $v$ which is crucial for characterizing $1$-homogeneous fields.  Notice the RHS of both equations is independent of $\tau$.

To prove \eqref{eqn:intro-hardt-simon}, \eqref{eqn:intro-non-conc} we follow Simon's computations, but the singular nature of $\bC_0$ adds significant complications.  The most delicate estimate controls the density excess of $M$ by its $L^2$-distance to $\bC$ (Proposition \ref{prop:density-est}), which requires heavily the no-holes condition and effective graphical estimates \eqref{eqn:intro-graph-2}, \eqref{eqn:intro-graph-3}.  We additionally exploit heavily the $120^\circ$ angle condition on the geodesic net, and this highlights a technical difference between our paper and \cite{simon1}: we require stationarity of $M$ and $\bC$ \emph{through} the singular set, while Simon only requires stationarity on the regular parts.

\subsubsection{Jacobi fields}

The aim of this section is to prove an $L^2$ decay for Jacobi fields satisfying certain orthogonality and growth conditions (Theorem \ref{thm:linear-decay}).  If $\bC$ had a smooth cross-section, this would follow easily from the Fourier expansion: the discrete powers of decay would show that any $v$ growing $> 1$-homogeneously, must grow at least $(1+\eps)$-homogeneously.  In our case, we adapt the ingenious method devised in \cite{simon1} to handle cylindrical cones.

The basic idea is the following.  On the one hand, we have an upper bound \eqref{eqn:intro-hardt-simon} at any scale $\rho$, which says that $v$ grows at least $1$-homogeneously.  On the other hand, in Theorem \ref{thm:1-homo-linear} we can characterize $1$-homogeneous Jacobi fields satisfying a growth bound like \eqref{eqn:intro-non-conc} as linear (or most specifically, as lying in a subspace of the linear fields).  By a simple contradiction argument, this allows us to say that whenever $v$ is $L^2(B_\rho)$-orthogonal to the linear fields, then $v$ must grow \emph{quantitatively} more than $1$-homogeneously at scale $\rho$.  That is, 
\begin{gather}\label{eqn:intro-hardt-simon-lower}
\int_{\bC \cap B_1 \setminus B_{1/10}} R^{2-n} |\partial_R(v/R)|^2 \geq \frac{1}{c(\bC)} \int_{\bC \cap B_1} |v|^2.
\end{gather}
Chaining \eqref{eqn:intro-hardt-simon} and \eqref{eqn:intro-hardt-simon-lower} with a hole-filling gives the required decay.

Most of this section is analogous to \cite{simon1}, except care must be taken to ensure the argument works with compatible Jacobi fields.  In particular, we demonstrate in Theorem \ref{thm:eigenfunctions} a spectral decomposition for the Jacobi operator system on equiangular geodesic nets.

\subsubsection{Inhomogeneous blow-ups and conclusion of proof}

Here we make sense of inhomogeneous blow-up limits $\beta_i^{-1} u_i \to v$, and prove that the resulting $v$ is a compatible Jacobi field in the sense of Definition \ref{def:compatible}.  The $C^0$ compatibility condition arises from the sheets of $M$ meeting along a common single edge, and is essentially a direct consequence of Simon's $\eps$-regularity for the $\bY\times \R^m$.  The $C^1$ condition requires the stationarity of both $M$ and $\bC$ (\emph{through} the singular set), and depends strongly on the Remark \ref{rem:looks-like-Y} that away from $\axis$, $M$ is locally either $\R^{m+2}$ or $\bY\times \R^{m+1}$.

We then show in Proposition \ref{prop:kill-linear} how integrability allows us to choose new cones $q_i(\bC)$ (for $q_i \in SO(n+k)$) nearby $\bC$ in such a way the the limiting $v$ has no linear component at a given scale.  This allows us to prove the required estimates on $v$ to apply the linear decay Theorem \ref{thm:linear-decay}.

Finally, we can implement the blow-up argument sketched in the initial Proof Outline, to finish proving decay Theorem \ref{thm:main-decay}.

\subsubsection{Equiangular nets in $\bbS^2$}

In this section we establish some background results on nets in $\bbS^2$.  We reprove for the reader's convience the general no-holes principle for $\bY\times \R^m$, and additionally demonstrate a no-holes principle for the tetrahedral cone $\bT\times \R^m$, under natural structure assumptions on $M$.

We prove integrability (in the sense of Definition \ref{def:integrable}) for all equiangular nets in $\bbS^2$, which allows us to apply Theorem \ref{thm:main-decay} to any polyhedral cone $\bC_0 \subset \R^3 \subset \R^{2+k}$.  Unfortunately we are unable to give a general abstract proof, but must appeal to the classification of these nets due to \cite{lamarle}, \cite{heppes}.  It is possible that in general codimension there exist non-integrable equiangular nets (see also Remarks \ref{rem:locally-int}, \ref{rem:maybe-non-int}, \ref{rem:general-non-int}).

\tableofcontents \end{comment}

\section{Graphical estimates}\label{sec:graph}

In this section, and in fact for the duration of the paper, we take $\refC_0^2 \subset \R^{2+k}$ to be a fixed polyhedral cone, composed of wedges $\{W(i)\}_{i=1}^d$.  We set $\refC^n = \refC_0^2 \times \R^m$.  Using the $\eps$-regularity theorems for the plane and $\bY\times \R^{1+m}$, we prove that any $M^n$ sufficiently near $\bC$ must decompose away from the axis as $C^{1,\alpha}$ graphs with effective estimates (though of course the estimates degenerate as $r \to 0$).  Note that $c$ is independent of $\tau$.

\begin{lemma}[Effective graphicality over polyhedral cones]\label{lem:poly-graph}
For any $\beta, \tau > 0$, there is an $\eps(\refC, \beta, \tau)$ and $c(\refC, \beta)$, $\alpha(\bC)$ so that the following holds.  Take $M \in \cN_{\eps}(\refC)$.  Then there is a radius function $r_y : B_1 \cap (\{0\}\times \R^m) \to \R$ with $r_y < \tau$, so that we can decompose
\begin{gather}\label{eqn:poly-graph-1}
M \cap B_{3/4} \setminus B_{r_y}(\{0\}\times \R^m) = M(1) \cup \cdots \cup M(d),
\end{gather}
where for each $i$ there is some domain $\Omega(i) \subset P(i) \times \R^m$, and $C^{1,\alpha}$ function $u(i) : \Omega(i) \to (P(i) \times \R^m)^\perp$, such that
\begin{gather}\label{eqn:poly-graph-2}
M(i) = \{ x + u(i)(x) : x \in \Omega(i) \}.
\end{gather}

Each $\Omega(i)$ is graphical over $W(i)\times \R^m$ in the sense that there exist $C^{1,\alpha}$ functions $f(i) : \partial W(i) \times \R^m \to (\partial W(i)^\perp \cap P(i)) \times \R^m$, 
so that
\begin{gather}\label{eqn:poly-graph-3}
\partial\Omega(i) \cap B_{1/2} \setminus B_{r_y}(\axis) \subset \{ x' + f(i)(x') : x' \in \partial W(i) \times \R^m \} .
\end{gather}

Moreover the functions $u(i)$, $f(i)$ have the following pointwise estimates
\begin{align}
&\sup_ {\partial W(i)\cap (B_{1/2} \setminus B_{r_y}(\axis))}r^{-1} |f(i)| +|Df(i)| + r^\alpha[Df(i)]_{\alpha, C} \leq \beta,\label{eqn:point-est-poly-1}\\
&\sup_ {\Omega(i)\cap (B_{1/2} \setminus B_{r_y}(\axis))}r^{-1} |u(i)| +|Du(i)|+ r^\alpha [Du(i)]_{\alpha, C} \leq \beta,\label{eqn:point-est-poly-11}\\
&\sup_{\partial W(i)\cap (B_{1/2} \setminus B_{5r_y}(\axis))} r^{n+2}\bigl( r^{-1} |f(i)| + |Df(i)| + r^\alpha [Df(i)]_{\alpha, C} \bigr)^2 \leq c\, E(M, \bC, 1),  \label{eqn:point-est-poly-2}\\
&\sup_{\Omega(i)\cap (B_{1/2} \setminus B_{5r_y}(\axis))} r^{n+2}\bigl(r^{-1} |u(i)| + |Du(i)| + r^\alpha ([Du(i)]_{\alpha, C}\bigr)^2 \leq c \, E(M, \bC, 1)\,  .\label{eqn:point-est-poly-22}
\end{align}
and integral estimates
\begin{align}\label{eqn:integral-est-poly-1}
&\sum_{i=1}^d \int_{(\partial W(i) \times \R^m) \cap (B_{1/2}\setminus B_{5r_y}(\{0\}\times \R^m))} r |f(i)|^2 + \sum_{i=1}^d \int_{\Omega(i) \cap B_{1/2} \setminus B_{5r_y}(\{0\}\times \R^m)} r^2 |D u(i)|^2 \\ \label{eqn:integral-est-poly-2}
& \quad+ \sum_{i=1}^d \int_{M(i) \cap B_{10r_y}(\{0\}\times \R^m)} r^2  \leq c \, E(M, \bC, 1)\, .
\end{align}

\end{lemma}

\subsection{Multiplicity-one convergence}\label{sec:mult-one}

We will be working with a one-sided excess, and therefore must restrict our admissible class of cones to those for which one-sided closeness (so, smallness of $E$) gives regularity.  We call these ``atomic.''  This restriction can be easily avoided by considering a \emph{two-sided} excess, similar to that of \cite{Neshan}.  However, we shall see that any \emph{integrable} polyhedral cone (as per our Definition \ref{def:integrable}) is atomic, so for our purposes this is no restriction at all.

\begin{definition}
We say a cone $C_0$ is \emph{atomic} if it cannot be written as the union (i.e. varifold sum) of two non-zero stationary cones.
\end{definition}

\begin{lemma}
Any polyhedral integrable cone $\bC_0^2 \subset \R^{2+k}$ is atomic.  The cone $\bY^1 \times \R$ is atomic.
\end{lemma}

\begin{proof}
Suppose on the contrary we can write $\bC_0 = \bC_0^{(1)} + \bC_0^{(2)}$, where each $\bC^{(i)}_0$ is a non-zero stationary polyhedral cone.  The geodesic nets $\bC^{(i)} \cap \bbS^{2+k}$ are disjoint, and so we can construct a $1$-parameter family of polyhedral cones $\bC_t$ obtained by rotating $\bC_0^{(1)}$ but keeping $\bC_0^{(2)}$ fixed.  Therefore $\bC_0$ is non-integrable, since the deformation $\bC_t$ is not a \emph{global} rotation.  Atomicity of $\bY\times \R$ is obvious.
\end{proof}

The following Lemma is the reason we introduce the notion of atomicity.
\begin{lemma}\label{lem:mult-one}
Let $\bC = \bC_0^\ell\times \R^m$ be a stationary atomic cone, where $\bC_0$ is either smooth or polyhedral (in which case $\ell = 2$).  Let $M_i$ be a sequence of integral varifolds, so that $M_i \in \cN_{\eps_i}(\refC)$ with $\eps_i \to 0$.  Then $M_i \to \bC$ as varifolds with multiplicity $1$.
\end{lemma}

\begin{proof}
After taking a subsequence we have convergence on compact subsets of $B_1$ to some stationary $M$, and by our hypothesis we know $M$ is supported in $\bC$.  We claim that $M$ has constant multiplicity on each subcone of $\refC$.  If $\refC_0$ is smooth this is immediate from the constancy theorem.  If $C_0$ is polyhedral, then the constancy theorem implies $M$ has constant density on each wedge, and by stationarity of the $Y$ junction any three wedges which meet must have the same multiplicity (compare Lemma \ref{lem:vect-vs-scalar-cond}).  This proves the claim.

Therefore $M$ is stationary and supported inside $\refC$, and since $\refC$ is atomic we must have $M = p \refC$ for some integer $p$.  Since $0 \in M$ we have $p \geq 1$.  On the other hand, by our restriction $\theta_M(0, 1) \leq \frac{3}{2} \theta_{\refC}(0)$, we must have $p \leq 1$.  This proves the Lemma.
\end{proof}

\subsection{$\eps$-regularity of Allard and Simon}

Let us recall the $\eps$-regularity results for the plane and $\bY\times \R^m$.

\begin{theorem}[Allard's $\eps$-regularity for the plane \cite{All}]\label{thm:allard}
There are $\eps(n, k)$ and $\mu(n, k)$ so that the following holds.  Suppose $M^n \in \cN_{\eps}(\R^n)$.  Then there is a $C^{1,\mu}$ function $u : \Omega \subset \R^n \to \R^k$, so that
\begin{gather}
M \cap B_{3/4} = \graph_{\R^n}(u), \quad |u|_{C^{1,\mu}} \leq c(n, k) E(M, \R^n, 1)^{1/2}.
\end{gather}
\end{theorem}

In \cite{simon1} Simon proved an $\eps$-regularity theorem for cones of the form $\bY\times \R^m$.  In his original paper, Simon worked in a so-called multiplicity-one class of varifolds, but by using our Lemma \ref{lem:graph-smooth} in place of his Lemma 2.6 one can remove this hypothesis (see Appendix \ref{sec:graphical-smooth}).  A caveat: our Lemma \ref{lem:graph-smooth} is \emph{not} sufficient to remove the multiplicity-one class assumption from Simon's various structure theorems for the singular set.

\begin{theorem}[Simon's $\eps$-regularity for $\bY\times \R^m$ \cite{simon1}]\label{thm:Y-reg}
There are $\eps(m, k)$, $\mu(n, k)$ so that the following holds.  Suppose $M^{1+m} \in \cN_{\eps}(\bY^1\times \R^m)$.  Then $M\cap B_{3/4}$ is $C^{1,\mu}$-close to $\bY\times \R^m$ in the following sense: We can decompose $M\cap B_{3/4} = M(1) \cup M(2) \cup M(3)$, so that for each $i = 1, 2, 3$ there is a domain $\Omega(i) \subset Q(i)$, and a $C^{1,\mu}$ function $u(i) : \Omega(i) \to Q(i)^\perp$, so that
\begin{gather}
M(i) = \{ x + u(i)(x) : x \in \Omega(i) \}, \quad Q(i) \cap B_{1/2} \subset \Omega(i), \quad |u(i)|_{C^{1,\mu}} \leq c(m,k) E(M, \bY\times \R^m, 1)^{1/2} .
\end{gather}

Each $\Omega(i)$ is graphical over $H(i)$, in the sense that there are $C^{1,\mu}$ functions
\begin{gather}
f(i) : \partial H(i) \cap B_{3/4} \to (\partial H(i))^\perp \cap Q(i), \quad |f(i)|_{C^{1,\mu}} \leq c(m,k) E(M, \bY\times \R^m, 1)^{1/2},
\end{gather}
so that
\begin{gather}
\partial \Omega(i) \cap B_{1/2} \subset \{ x' + f(i)(x') : x' \in \partial H(i) \cap B_{3/4} \} .
\end{gather}
\end{theorem}

\begin{proof}[Proof (see \cite{simon1})]
Ensuring $\eps(k, m)$ is sufficiently small, by Lemma \ref{lem:global-graph} we have $\sing M \cap B_{3/4} \subset B_{3/4} \cap B_{1/10}(\axis)$.  Write $\eps = E(M, \bY\times \R^m, 1)$.  Now given $Z \in \sing M \cap B_{3/4}$, we have 
\begin{gather}\label{eqn:Y-small-norm}
E(M, \bY \times \R^m, Z, 1/4) \leq c \,\eps^2 .
\end{gather}
For topological reasons (see Proposition \ref{prop:no-holes-Y}), \eqref{eqn:Y-small-norm} implies $M$ must satisfy the $\delta$-no-holes condition w.r.t. $\bY \times \R^m$ for $\delta(\eps) \to 0$ as $\eps \to 0$.

Therefore, by \cite{simon1} (and our Lemma \ref{lem:graph-smooth}) we have for every $Z \in \sing M \cap B_{3/4}$ a rotation $q_Z \in SO(n+k)$, so that
\begin{gather}
\rho^{-n-2} \int_{M \cap B_\rho(Z)} d_{Z + q_Z(\bY\times \R^m)}^2 \leq c(k, m)  \rho^{2\mu} \eps^2,
\end{gather}
for some fixed $\mu = \mu(k, m) \in (0, 1)$.  In particular, for any other $W \in \sing(M) \cap B_{3/4}$, we have
\begin{gather}\label{eqn:holder-sing-Y}
|q_Z - q_W| \leq c(k, m) \eps |Z - W|^{\mu}, \quad |q_Z - Id| \leq c(k, m) \eps, \quad d(Z, \axis) \leq c(k, m)\eps.
\end{gather}
From \eqref{eqn:holder-sing-Y} we deduce that we can parameterize $\sing M  \cap B_{3/4}$ by a map $F : \axis \to \R^{1+k}\times\{0\}$ having $C^{1,\mu}$ norm bounded by $c(k,m)\eps$.  We define $f(i) := \pi_{Q(i)^T}(F)$.

On the other hand, take now an $X \in \reg M \cap B_{3/4}\cap B_{1/10}(\axis)$, and set $4\rho =  d(X, \sing M) = d(X, Z)$, where $Z \in \sing M$.  Then up to renumbering we have
\begin{gather}\label{eqn:decay-away-Y}
\rho^{-n-2} \int_{M \cap B_\rho(X)} d_{Z + q_Z(Q(1))}^2 \leq \rho^{-n-2} \int_{M \cap B_{4\rho}(Z)} d_{Z + q_Z(\bY\times \R^m)}^2 \leq c \,\eps^2 \rho^{2\mu}.
\end{gather}
Therefore, by Allard we can write $M \cap B_{\rho/2}(X)$ as a graph of $u$ over $Z + q_Z(Q(1))$ with estimates
\begin{gather}
\rho^{-1} |u| + |Du| + \rho^{\mu} [Du]_\mu \leq c(k,m) \eps \rho^\mu.
\end{gather}
Using estimates \eqref{eqn:holder-sing-Y}, we can therefore write $M \cap B_{\rho/2}(X)$ as a graph over $Q(1)$ with uniform $C^{1,\mu}$ norm bounded by $c(k,m)\eps$.  Moreover, if $\tilde q_{X} \in SO(n+k)$ is the rotation taking $Q(1)$ to the tangent space $T_X M$, then \eqref{eqn:decay-away-Y} shows that
\begin{gather}\label{eqn:holder-reg}
|q_Z - \tilde q_X| \leq c(k,m)\eps |Z - \tilde X|^\mu.
\end{gather}
Since $X$ is arbitrary, estimates \eqref{eqn:holder-sing-Y} and \eqref{eqn:holder-reg} show that $u$ extends as a $C^{1,\mu}$ function up to and including the boundary $\pi_{Q(1)}(\sing M)$.
\end{proof}

\begin{definition}\label{def:Y-graph}
For ease of notation, we will write the following to indicate $M$ decomposes as in Theorem \ref{thm:Y-reg}.
\begin{gather}
M \cap B_{3/4} = \graph_{\bY\times \R^m}(u, f, \Omega), \quad B_{1/2} \subset\Omega, \quad |u|_{C^{1,\alpha}} + |f|_{C^{1,\alpha}} \leq c(k,m) E(M, \bY\times \R^m, 1)^{1/2}.
\end{gather}
\end{definition}

\subsection{Graphicality for polyhedra}


We first prove a ``crude'' graphicality for polyhedral cones, from which we push towards the spine as far as possible.

\begin{lemma}\label{lem:poly-global-graph}
Given any $\beta, \tau > 0$,  there is an $\eps_1(\refC, \beta, \tau)$ so that the following holds.  Given $M \in \cN_{\eps_1}(\refC)$, then we can decompose
\begin{gather}
M \cap B_{3/4} \setminus B_{\tau}(\axis) = M(1) \cup \cdots \cup M(d),
\end{gather}
where for each $i$ there is some domain $\Omega(i) \subset P(i) \times \R^m$, and $C^{1,\alpha}$ function $u(i) : \Omega(i) \to (P(i) \times \R^m)^\perp$, so that
\begin{gather}
M(i) = \{ x + u(i)(x) : x \in \Omega(i) \} .
\end{gather}

Each $\Omega(i)$ is graphical over $W(i)\times \R^m$ in the sense that there are $C^{1,\alpha}$ functions $f(i) : \partial W(i) \times \R^m \to (\partial W(i)^\perp \cap P(i)) \times \R^m$, 
so that
\begin{gather}
\partial\Omega(i) \cap B_{1/2}\setminus B_{2\tau}(\axis) \subset \{ x' + f(i)(x') : x' \in \partial W(i) \times \R^m \} .
\end{gather}

Moreover, we have the pointwise estimates
\begin{align}\label{eqn:polyhedral-graph-est-1}
&\sup_{\Omega(i) \cap B_{1/2} \setminus B_{2\tau}(\axis)} r^{-1} |u(i)| + |Du(i)| + r^\alpha [Du(i)]_{\alpha, C} \leq \beta, \\ \label{eqn:polyhedral-graph-est-2}
&\sup_{(\partial W(i) \times \R^m) \cap B_{1/2} \setminus B_{2\tau}(\axis)} \quad r^{-1} |f(i)| + |Df(i)| + r^\alpha [Df(i)]_{\alpha, C} \leq \beta.
\end{align}
\end{lemma}

\begin{proof}

This is essentially a direct Corollary of Lemma \ref{lem:mult-one}, and the $\eps$-regularity of the plane and $\bY$-type cones.  If the Lemma failed, we would have a counter-example sequence $M_i$.  Passing to a subsequence, we have by Lemma \ref{lem:mult-one} multiplicity-$1$ varifold convergence $M_i \to \refC$ on compact subsets of $B_1$.

In any ball avoiding the lines $\partial W(i) \times \R^m$ we can eventually write $M_i$ as a $C^{1,\alpha}$ graph over $\bC$ by Allard's theorem, satisfying the (local, scale-invariant) estimates \eqref{eqn:polyhedral-graph-est-1}.  Similarly, in any ball centered on a line $\partial W(i) \times \R^m$, but disjoint from the axis $\axis$, we can eventually decompose $M_i$ into graphs over $\bC$ as in Theorem \ref{thm:Y-reg}, and having estimates \eqref{eqn:polyhedral-graph-est-2}.
\end{proof}

\begin{definition}\label{def:poly-graph}
For ease of notation, we write
\begin{gather}
M \cap B_{3/4} \setminus B_{\tau}(\axis) = \graph_{\bC}(u, f, \Omega), \quad B_{1/2} \setminus B_{2\tau}(\axis) \subset \Omega, \quad |u|_{C^{1,\alpha}} + |f|_{C^{1,\alpha}} \leq \beta
\end{gather}
to indicate the decomposition as in Lemma \ref{lem:poly-global-graph}.
\end{definition}

\begin{remark}\label{rem:wedge-cont}
One consequence of Lemma \ref{lem:poly-global-graph} is that the number and size of wedges for polyhedral cones is continuous under varifold convergence.
\end{remark}

For $y \in \R^m$, us define the torus
\begin{gather}
U(\rho, y, \gamma) = \{ (\xi, \eta) \in \R^{2+k}\times \R^m: (|\xi| - \rho)^2 + |\eta - y|^2 \leq \gamma \rho^2 \},
\end{gather}
and the ``halved-torus''
\begin{gather}
U_+(\rho, y, \gamma) = U(\rho, y, \gamma) \cap \{ (\xi, \eta) : |\xi| \geq \rho \}.
\end{gather}

The following Lemma gives us a criterion to decide how close to the spine we can push graphicality, and is the key to integral estimates \eqref{eqn:integral-est-poly-1}.  The graphicality assumption in the half-torus allows us to avoid working in a multiplicity-1 class.

\begin{lemma}\label{lem:poly-tiny-graph}
For any $\beta > 0$ there is an $\eps_2(\refC, \beta)$ so that the following holds.  Take $M^n \in \cN_{1/10}(\bC)$.  Pick $\rho \leq 1/16$, and $y \in B_{3/4}^m$.  Suppose we know
\begin{equation}\label{eqn:poly-tiny-u}
M \cap U_+(\rho, y, 1/16) \subset \graph_{\bC}(u,\Omega, f), \quad |u|_{C^{1,\alpha}} + |f|_{C^{1,\alpha}} \leq 1/10, 
\end{equation}
where $\graph_\bC(u, \Omega, f)$ is a decomposition as in Lemma \ref{lem:poly-global-graph}, and
\begin{equation}\label{eqn:poly-tiny-dist}
\rho^{-n-2} \int_{M \cap U(\rho, y, 1/4)} d_{\bC}^2 + \rho||H_M||_{L^\infty(U(\rho, y, 1/4))} \leq \eps_2 .
\end{equation}
Then we have
\begin{gather}\label{eqn:poly-tiny-concl}
M \cap U(\rho, y, 1/8) \subset \graph_{\bC}(u,\Omega, f), \quad |u|_{C^{1,\alpha}} + |f|_{C^{1,\alpha}} \leq \beta.
\end{gather}
\end{lemma}

\begin{proof}
By dilation invariance and monotonicity, we see there is no loss in supposing $\rho = 1/2$.  Suppose the Lemma is false, and consider a counterexample sequence $M_i$, $y_i$, $\eps_i \to 0$, which satisfy the hypothesis of the Lemma and $M_i \cap U(\rho, y_i, 1/8) \neq \emptyset$, but fail \eqref{eqn:poly-tiny-concl}.  Passing to a subsequence, the $y_i \to y \in B_{3/4}^m$, and in $U(\rho, y, 1/5)$ the $M_i$'s converge to a stationary varifold supported in $\refC$.  The multiplicity of the limit in each component of $\refC \cap U(\rho, y, 1/5)$ is constant, but by the graphicality assumption we converge with multiplicity one inside $U_+(\rho, y, 1/16)$.

Therefore the convergence is with multiplicity $1$, and by Theorems \ref{thm:allard}, \ref{thm:Y-reg} we deduce that for $i >> 1$ we satisfy the conclusions of the Lemma.
\end{proof}

Using Lemma \ref{lem:poly-tiny-graph}, we can obtain the finer graphical estimates of Lemma \ref{lem:poly-graph}.

\begin{proof}[Proof of Lemma \ref{lem:poly-graph}]
We can assume $\beta \leq 1/10$.  Ensure $\eps \leq \min\{ \eps_1(\refC, \beta, \tau), \eps_2(\refC, \beta)\}$, the constants from Lemmas \ref{lem:poly-global-graph}, \ref{lem:poly-tiny-graph}.  Recalling Definition \ref{def:poly-graph}, we have the crude decomposition
\begin{gather}
M \cap B_{3/4} \setminus B_{\tau}(\axis) = \graph_{\refC}(u, f, \Omega), \quad B_{1/2} \setminus B_{2\tau}(\axis) \subset \Omega, \quad |u|_{C^{1,\alpha}} + |f|_{C^{1,\alpha}} \leq \beta .
\end{gather}

Given $y \in B_{3/4}^m$, define
\begin{gather}
r_y = \inf \{ r' : \text{\eqref{eqn:poly-tiny-u} holds for all $r' < \rho < 3/4$} \}.
\end{gather}
According to this definition, we can extend $\Omega(i)$, $u(i)$, and $f(i)$ to obtain the decomposition of \eqref{eqn:poly-graph-1}, \eqref{eqn:poly-graph-2}, \eqref{eqn:poly-graph-3}, with estimates \eqref{eqn:point-est-poly-1}, \eqref{eqn:point-est-poly-11}.  Moreoever, by Lemma \ref{lem:poly-global-graph} $r_y \leq \tau$.  If $r_y > 0$, then necessarily by Lemma \ref{lem:poly-tiny-graph}, \eqref{eqn:poly-tiny-dist} must fail at $r_y$, and hence
\begin{gather}
r_y^{n+2} \eps_2 \leq \int_{M \cap U(r_y, y, 1/4)} d_{\refC}^2 + r_y^{n+3} ||H_M||_{L^\infty(U(r_y, y, 1/4))}.
\end{gather}
In particular, by monotonicity we have
\begin{gather}
\int_{M \cap B_{20 r_y}(0, y)} r^2 \leq c(\refC, \beta) \int_{M \cap U(r_y, y, 1/4)} d_{\refC}^2 + c(\bC, \beta) r_y^{n+3} ||H||_{L^\infty(B_1)}.
\end{gather}

Take a Vitali subcover $\{B_{2\rho_j}(0, y_j)\}_j$ of
\begin{gather}
\{B_{10r_y}(0, y) : y \in B_{3/4}^m \text{ and } r_y > 0 \},
\end{gather}
and then by construction $\{B_{10\rho_j}(0, y_j)\}_j$ covers $\mu_M$-a.e. $B_{3/4} \cap B_{10r_y}(\axis)$, and the $U(\rho_j, y_j, 1/4) \subset B_{2\rho_j}(0, y_j)$ are disjoint.  Note that, by disjointness, $\sum_j \rho_j^m \leq c(m)$.  We deduce that
\begin{align}
\int_{M \cap B_{3/4} \cap B_{10r_y}(\axis)} r^2 
&\leq \sum_j \int_{M \cap B_{20\rho_j}(0, y_j)} r^2 \\
&\leq \sum_j c\int_{M \cap U(\rho_j, y_j, 1/4)} d_{\refC}^2 + \sum_j c \rho_j^{n+3} ||H||_{L^\infty(B_1)}\label{eqn:poly-global-graph-vitali} \\
& \leq c(\refC, \beta) \int_{M \cap B_1} d_{\refC}^2 + c(\refC, \beta) ||H||_{L^\infty(B_1)}
\end{align}

We prove the additional pointwise and $L^2$ estimates.  We claim that
\begin{gather}
(x, y) \not\in B_{2r_y}(\axis) \implies B_{|x|/2} \cap B_{r_y}(\axis) = \emptyset.
\end{gather}
Otherwise, suppose the latter intersection is non-empty, and contains some $(x', y')$.  Then we have
\begin{gather}
|x|/2 \leq |x'| \leq 3|x|/2, \quad \text{ and } (x', y') \in B_{r_{y''}}(0, y'') \text{ for some $y''$},
\end{gather}
which implies that
\begin{gather}
|(x, y) - (0, y'')| < |x|/2 + r_{y''} \leq |x'| + r_{y''} < 2r_{y''} ,
\end{gather}
a contradiction.  This proves the claim.

Define $\delta(\refC)$ by
\begin{gather}
\delta = 1/100 \cdot \text{(smallest geodesic length in $\refC_0\cap \bbS^{1+k}$)} \leq 1/20,
\end{gather}
so that $B_{10\delta|x|}(x, y) \cap B_{10\delta |x'|}(x', y') = \emptyset$ whenever $(x, y)$ and $(x', y')$ belong to different triple junctions in $\bC$.

Now provided $\beta(m, k, \delta)$ is sufficiently small, given any $(x, y) = (x' + u(i)(x', y), y) \in \sing M \setminus B_{2r_y}(\axis)$, we can use the above claim and Simon's regularity at scale $B_{10\delta |x|}(x, y)$ to deduce
\begin{align}\label{eqn:poly-graph-6}
\sup_{B_{5\delta |x|}(x, y)} |x|^{n+2}|Du(i)|^2 + |x|^{n}|f(i)|^2 
&\leq c \int_{B_{10\delta |x|}(x, y) \cap M} d_{\refC}^2 + c|x|^{n+3} ||H_M||_{L^\infty(B_1)} \\
&\leq c(\refC, \beta) E(M, \refC, 1).
\end{align}
Of course, on the LHS we could put any of the $C^{1,\alpha}$ estimates for $u$ or $f$ from Theorem \ref{thm:Y-reg}, normalized to scale like $|x|^{n+2}$.

Let $\{(x_j, y_j)\}_j$ be the centers of a Vitali cover of
\begin{gather}\label{eqn:poly-graph-5}
\{ B_{\delta |x|}(x, y)  : (x, y) \in \sing M \cap B_{1/2} \setminus B_{2r_y}(\axis) \}.
\end{gather}
Then the balls $\{B_{5\delta|x_j|}(x_j, y_j)\}_j$ cover \eqref{eqn:poly-graph-5}, and have overlap bounded by a universal constant $c(n)$.  In particular, since $\beta \leq 1/10$ the number of $5\delta|x_i|$-balls meeting $M \cap \{ |x| = r \} \cap B_1$ is bounded by $c(\refC) r^{-m}$.

We can we can sum up the estimates \eqref{eqn:poly-graph-6} to obtain
\begin{align}
\sum_{i=1}^d \int_{(\partial W(i) \times \R^m) \cap (B_{1/2} \setminus B_{5r_y}(\axis))} r |f(i)|^2 
&\leq c \sum_j \left( \int_{M \cap B_{5\delta|x_j|}(x_j, y_j)} d_{\bC}^2 + |x_j|^{n+3} ||H_M||_{L^\infty(B_1)}  \right) \\
&\leq c \int_{M \cap B_1} d_{\bC}^2 + c \int_0^1 r^{5} \frac{dr}{r} ||H||_{L^\infty(B_1)} \\
&\leq c(\bC, \beta) E(M, \bC, 1) .
\end{align}

If instead $(x, y) \in M \setminus (B_{2r_y}(\axis)$ and $d((x, y), \sing M) \geq \delta|x|/5$, then we can apply Allard  to deduce
\begin{gather}\label{eqn:poly-graph-4}
\sup_{B_{\delta |x|/10}(x, y)} |x|^{n+2} |Du(i)|^2  \leq c( \refC, \beta) \int_{B_{\delta |x|/5}(x, y) \cap M} d_{\refC}^2 + c(\bC) |x|^{n+3} ||H_M||_{L^\infty(B_1)}.
\end{gather}

By taking a Vitali cover of
\begin{gather}
\{ B_{\delta |x|/10}(x, y) : (x, y) \in M \setminus B_{2r_y}(\axis) \text{ and } d((x, y), \sing M) \geq \delta |x|/5 \},
\end{gather}
we can use both Vitali covers to sum up estimates \eqref{eqn:poly-graph-6} and \eqref{eqn:poly-graph-4} as before to obtain
\begin{gather}
\sum_{i=1}^d \int_{\Omega(i) \cap (B_{1/2} \setminus B_{5r_y}(\axis))} r^2 |Du(i)|^2 \leq c(\refC, \beta) E(M, \bC, 1). 
\end{gather}

This proves the $L^2$ estimates \eqref{eqn:integral-est-poly-1}, \eqref{eqn:integral-est-poly-2}.  The pointwise estimates \eqref{eqn:point-est-poly-2}, \eqref{eqn:point-est-poly-22} follow directly from from Simon's and Allard's regularity theorems as in \eqref{eqn:poly-graph-6}, \eqref{eqn:poly-graph-4}.
\end{proof}


 \end{comment}

\section{$L^2$ estimates}\label{sec:estimates}



We demonstrate various decay and growth quantities are controlled at the scale of excess.  We require first a 

\begin{definition}\label{def:chunky}
Set $r_\nu = 2^{-\nu}$, where $\nu \in \{0, 1, 2, \ldots \}$.  For each $\nu$, let us choose and fix (for the duration of this paper) a covering of $\R^m$ by disjoint, half-open cubes $\{Q_{\nu \mu}\}_\mu$, each having side length $r_\nu$.

We say $f(r, y) : \R_+ \times \R^m \to \R$ is \emph{chunky} if $f$ is constant on each annular cylinder $[r_{\nu+1}, r_\nu) \times Q_{\nu\mu}$.
\end{definition}

We introduce this class of functions is because of the following compactness result.
\begin{lemma}
Let $\{\kappa_i\}$ be a sequence of chunky functions with $||\kappa_i||_{L^\infty(U)} \leq c(U)$ for all $U \subset\subset \R_+\times \R^m$.  Then we can find a chunky function $\kappa$, admitting the same bounds $||\kappa||_{L^\infty(U)} \leq c(U)$, and a subsequence $i'$, so that  $\kappa_{i'} \to \kappa$ pointwise, and uniformly on compact sets.
\end{lemma}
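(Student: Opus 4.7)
The plan is a standard diagonal/extraction argument exploiting the fact that a chunky function is determined by countably many real values (one per chunk), together with the observation that compact subsets of $\R_+\times\R^m$ meet only finitely many chunks.

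First, I would enumerate all the chunks: let $\{C_n\}_{n=1}^\infty$ be an enumeration of the annular cylinders $C_{\nu\mu}:=[r_{\nu+1}, r_\nu)\times Q_{\nu\mu}$ indexed by $(\nu,\mu)$. Each $C_n$ is contained in the compact set $[r_{\nu+1}, r_\nu]\times \overline{Q_{\nu\mu}} \subset\subset \R_+\times\R^m$ (the key point being $r_{\nu+1}>0$), so the hypothesis furnishes a constant $c_n$ with $|\kappa_i|_{C_n}|\le c_n$ for all $i$. Since $\kappa_i$ is constant on $C_n$, its restriction is just a bounded real number, and Bolzano--Weierstrass plus a standard diagonal extraction yields a subsequence $\kappa_{i'}$ such that on each chunk $C_n$ the constant value of $\kappa_{i'}$ converges to some real number $a_n$.

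Define $\kappa$ to be the function taking value $a_n$ on $C_n$; by construction $\kappa$ is chunky, $\kappa_{i'}\to\kappa$ pointwise on $\R_+\times\R^m$, and the $L^\infty$ bounds pass to $\kappa$ because on any $U\subset\subset\R_+\times\R^m$ the value $a_n$ arises as a limit of numbers of modulus $\le c(U)$ for every chunk $C_n$ meeting $U$.

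For uniform convergence on compact sets I would verify the elementary fact that any $K\subset\subset \R_+\times\R^m$ meets only finitely many chunks: compactness forces $K\subset [\rho_0,\rho_1]\times B_R(0)$ for some $0<\rho_0\le \rho_1$ and $R<\infty$, which restricts $\nu$ to the finite range with $r_{\nu+1}\le \rho_1$ and $r_\nu\ge \rho_0$, and for each such $\nu$ restricts $\mu$ to the finitely many cubes intersecting $B_R(0)$. On each of these finitely many chunks the difference $\kappa_{i'}-\kappa$ is a single real number tending to $0$, so the maximum over these finitely many chunks also tends to $0$, giving uniform convergence on $K$. There is no real obstacle here; the only thing to be careful about is the countable choice embedded in the diagonal extraction and the easy but crucial observation that $\R_+$ (rather than $[0,\infty)$) keeps chunks at each scale bounded away from $r=0$.
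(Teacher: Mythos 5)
Your argument is correct and is exactly the standard diagonal-extraction-over-countably-many-chunks argument that the paper has in mind; the paper's own proof simply reads ``Obvious,'' so you have supplied the details it omits. The one small simplification worth noting: the bound $\|\kappa\|_{L^\infty(U)} \le c(U)$ follows immediately from pointwise convergence (a pointwise limit of functions bounded by $c(U)$ on $U$ is itself bounded by $c(U)$ on $U$), so the chunk-by-chunk verification of the bound is not needed.
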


\begin{proof}
Obvious.
\end{proof}

We work towards the following Theorem.  As before we fix a polyhedral cone $\refC_0^2 \subset \R^{2+k}$, composed of wedges $\{W(i)\}_{i=1}^d$ and lines $\{L(i)\}_{i=1}^{2d/3}$.  We take $\refC = \refC_0 \times \R^m$.  Recall the Definition \ref{def:no-holes} of the ``no-holes'' condition.
\begin{theorem}\label{thm:l2-est}
For any $\tau > 0$ and $\alpha \in (0, 1)$, there is an $\eps(\refC, \tau)$ so that the following holds.  Let $M \in \cN_{\eps}(\refC)$ and decompose
\begin{gather}
M \cap B_{3/4} \setminus B_{\tau}(\axis) = \graph_{\bC}(u, f, \Omega)
\end{gather}
as in Definition \ref{def:poly-graph}/Lemma \ref{lem:poly-global-graph}.

Then provided $\theta_M(0) \geq \theta_{\bC}(0)$, the following decay/growth estimates hold:
\begin{align}\label{eqn:thm-point-est}
&\int_{M \cap B_1} \frac{d_{\bC}^2}{R^{n+2-\alpha}} + \sum_{i=1}^d \int_{\Omega(i) \cap B_{1/10} \setminus B_\tau(\axis)} R^{2-n} |\partial_R (u(i)/R)|^2  + \int_{M \cap B_{1/10}} \frac{|X^\perp|^2}{R^{n+2}} \\
&\quad \leq c(\refC, \alpha) E(M, \bC, 1), 
\end{align}
where $X = \pi_{N_XM}(X)$ is the projection to the normal bundle of $M$.

Provided $M$ satisfies the $\tau/10$-no-holes condition w.r.t. $\refC$ in $B_{1/4}$, then we have decay along the spine:
\begin{align}\label{eqn:thm-spine-est}
&\int_{M \cap B_{1/4}} \frac{d_{\bC}^2}{\max(r, \tau)^{2-\alpha}} + \sum_{i=1}^d \int_{\Omega(i) \cap B_{1/4} \setminus B_\tau(L\times \R^m))} \frac{|u(i) - \kappa^\perp|^2}{\max(r, \tau)^{2+2-\alpha}} \\
&\quad \leq c(\refC, \alpha) E(M, \bC, 1) ,
\end{align}
where we write $L = \cup_{i=1}^{2d/3} L(i)$ for the lines of $\bC_0$, and $\kappa : (0, 1] \times B_1^m \to \R^{2+k} \times \{0\}$ is a chunky function admitting the bound
\begin{gather}\label{eqn:thm-k-est}
\sup_{(0, 1]\times B_1^m} |\kappa|^2 \leq c(\refC, \alpha) E(M, \bC, 1). 
\end{gather}
\end{theorem}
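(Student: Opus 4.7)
The plan is to follow the standard $L^2$ decay scheme for stationary integral varifolds near a singular model cone, combining monotonicity, graphical Caccioppoli estimates, and a dyadic best-fit translation argument in the spine direction.

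\emph{Pointwise estimates \eqref{eqn:thm-point-est}.} The bound on $\int |X^\perp|^2/R^{n+2}$ is the Hardt--Simon identity for the stationary varifold $M$: the standard monotonicity formula, together with the density inequality $\theta_M(0) \geq \theta_\bC(0)$, bounds this integral by the mass drop $\omega_n^{-1}(1/10)^{-n}\mu_M(B_{1/10}) - \theta_M(0)$, which in turn is controlled by $E(M,\bC,1)$. The $\int d_\bC^2/R^{n+2-\alpha}$ bound I would obtain by a dyadic decomposition: applying the graphical description of Lemma \ref{lem:poly-global-graph} to each rescaling of $M$ at scale $2^{-\nu}$ gives $d_\bC^2 \lesssim |u|^2 + |f|^2$ on the graphical region, which is controlled by Caccioppoli-type estimates for the minimal surface equation on each wedge of $\bC$; the small $\alpha$-loss absorbs the contribution of the non-graphical neighborhood $B_\tau(\axis)$ via a hole-filling/interpolation argument. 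The radial derivative bound follows from the pointwise inequality $R^2 |\partial_R(u(i)/R)|^2 \lesssim |X^\perp|^2$ on each wedge graph, since both quantities measure the deviation of the graph from being radially homogeneous, and invoking the $|X^\perp|^2$ bound already established.

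\emph{Construction of the chunky $\kappa$ and spine estimate.} For each scale $r_\nu$ and spine cube $Q_{\nu\mu}$, define $\kappa_{\nu\mu} \in \R^{2+k}\times\{0\}$ as the best-fit translation minimizing the $L^2$ deviation of the translated cone from $M$ on the annular cylinder $[r_{\nu+1}, r_\nu) \times Q_{\nu\mu}$; the resulting $\kappa$ is chunky by construction. The $\tau/10$-no-holes condition supplies, at each scale and each spine cube, a genuine point of $M$ near the corresponding cylinder, at which the rescaled and re-centered pointwise estimates from \eqref{eqn:thm-point-est} apply and yield bounds on $|u(i) - \kappa^\perp|^2$ and on $d_\bC^2$ at that scale. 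Summing the scale-$\nu$ bounds against the weights $\max(r,\tau)^{\alpha-2}$ and $\max(r,\tau)^{\alpha-4}$ gives convergent geometric series dominated by $E(M, \bC, 1)$, yielding \eqref{eqn:thm-spine-est}.

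\emph{Main obstacle.} The technical heart is the $L^\infty$ bound \eqref{eqn:thm-k-est} on $\kappa$. One must show that consecutive best-fit translations $\kappa_{\nu\mu}$ and $\kappa_{\nu+1,\mu'}$ (for neighboring scales and cubes) differ by at most a multiple of $\sqrt{E(M,\bC,1)}$ times a factor summable in $\nu$, so that telescoping from scale $1$, where $\kappa$ is trivially bounded by the excess, produces the uniform $L^\infty$ bound. The no-holes condition is essential here: without it, consecutive best-fit translations could drift to infinity, since there would be no point of $M$ at the missing scale to anchor the translation. The drift estimate itself follows from applying \eqref{eqn:thm-point-est} at the scale of the anchoring points, converting the pointwise decay into quantitative closeness of adjacent $\kappa$'s.
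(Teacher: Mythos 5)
Your overall architecture (monotonicity for $|X^\perp|^2$, pointwise radial-derivative inequality, dyadic summation over spine cubes using the no-holes condition) matches the paper's at a high level, but there are two substantive problems.

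The first and most serious is your treatment of $\int_{M\cap B_{1/10}} |X^\perp|^2 / R^{n+2}$. You write that the monotonicity formula bounds this by the mass drop, ``which in turn is controlled by $E(M,\bC,1)$,'' as if this last step were routine. It is not: in the paper it is Proposition~\ref{prop:density-est}, described as ``by far the trickiest and most important estimate,'' because it effectively bounds a $W^{1,2}$ quantity by an $L^2$ quantity. The mass of $M$ involves $|Du(i)|^2$ terms, and the mass of $\bC$ must be subtracted; showing that the difference is $\lesssim E(M,\bC,1)$ requires testing stationarity with a cylindrical vector field $\phi^2(R)(x,0)$, using the graphical decomposition with matched boundary data $f(i)$ along shared edges (the cancellation $\sum_{i=1}^3 n(i)\cdot f(i) = 0$), and exploiting the polyhedral structure of $\bC_0$. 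Without this, your proof collapses: the radial vector field argument for $\int d_\bC^2/R^{n+2-\alpha}$ (which the paper gets via $\zeta^2(\tilde d/R)^2 R^{-n+\alpha}X$ rather than your dyadic Caccioppoli scheme, though both reduce to the same issue) also leaves you with the $|X^\perp|^2$ term to control. You need to actually prove Lemma~\ref{lem:density-workhouse}, not assume it.

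The second issue concerns $\kappa$. You define $\kappa_{\nu\mu}$ as a best-fit translation and identify the $L^\infty$ bound~\eqref{eqn:thm-k-est} as the ``main obstacle,'' proposing a telescoping drift estimate. The paper avoids this entirely by taking $\kappa_{\nu\mu} = \zeta_{\nu\mu}$, the cross-sectional component of an actual singular point $Z_{\nu\mu}$ with $\theta_M(Z_{\nu\mu}) \geq \theta_\bC(0)$ provided by the no-holes condition, and then proving directly in Proposition~\ref{prop:Z-est} that $|\zeta|^2 \leq c\, E(M,\bC,1)$ for any such singular point. That bound comes from a quantitative ``no-additional-symmetries'' argument~\eqref{eqn:no-sym-bound}: translating $\bC$ by a nonzero vector $a$ transverse to the spine produces an $L^2$ deviation comparable to $|a|^2$, which is detected by the excess. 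Your telescoping idea might be made to work, but it introduces an unnecessary summability issue (bounding drift between scales) that the paper's choice of $\kappa$ sidesteps, and as written your closing sentence (``the drift estimate itself follows from applying~\eqref{eqn:thm-point-est} at the scale of the anchoring points'') is circular, since the anchoring estimate is what you are in the process of establishing.
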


Let us give a brief
\begin{proof}[Outline of proof]
We will first show that testing the stationarity of $M$ with a radial vector field proportional to $d_{\bC}^2$, the following estimate holds
\begin{gather}\label{eqn:outline-decay}
\int_{M \cap B_1} \frac{d_{\bC}^2}{R^{n+2-\alpha}} \leq c(\bC, \alpha) E(M,\bC, 1) +c(\bC, \alpha) \int_{M \cap B_{1/10}} \frac{|X^\perp|^2}{R^{n+2}} . 
\end{gather}
Due to the fact that $\partial_R( X \equiv (x', y) + u(i)(x', y))$ is tangent to $M$, we also have a pointwise inequality
\begin{gather}\label{eqn:outline-hardt-simon}
|\partial_R(u(i)/R)|^2 \leq 2 |X^\perp|^2/R^4
\end{gather}
on each $\Omega(i) \cap B_{1/10}$.

Next, using a cylindrical vector field of the form $\phi^2(R) (x, 0)$, and the effective graphical estimates of Lemma \ref{lem:poly-graph}, we will show that whenever $\theta_M(0) \geq \theta_{\refC}(0)$, we have
\begin{gather}\label{eqn:outline-density}
\int_{M \cap B_{1/10}} \frac{|X^\perp|^2}{R^{n+2}} \leq c(\refC) E(M, \bC, 1).
\end{gather}
This estimate controlling density excess by $L^2$ excess is very important, and is by far the most involved.  Combining \eqref{eqn:outline-density} with \eqref{eqn:outline-decay}, \eqref{eqn:outline-hardt-simon} gives estimate \eqref{eqn:thm-point-est}.

To obtain the estimates \eqref{eqn:thm-spine-est}, we can apply \eqref{eqn:thm-point-est} at each singular point $Z$ satisfying $\theta_M(Z) \geq \theta_\bC(0)$, which by assumption form $\tau/10$-dense set in a $\tau$-neighborhood of the spine.  Now sum all these estimates up over cubes centered in $\axis$.
\end{proof}

\subsection{Decay estimate} We bound the decay and growth rates in terms of $L^2$ distance and density drop.  We first need a helper Lemma, which says we can find a good $C^1$ approximation to the distance function to our polyhedral cone.

\begin{lemma}\label{lem:smoothing-d}
We can find a $1$-homogeneous function $\tilde d$, which is $C^1$ on $\{\tilde d > 0\}$, and satisfies
\begin{equation}\label{eqn:tilde-d}
\frac{1}{c(\refC)} d_{\bC} \leq \tilde d \leq c(\refC) d_{\bC}, \quad |D\tilde d| \leq c(\refC).
\end{equation}
\end{lemma}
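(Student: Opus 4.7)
My plan is to reduce to a smoothing problem on the unit sphere and then re-extend by homogeneity. Set $\rho := d_{\bC}|_{\sphere^{n-1}}$, which is $1$-Lipschitz in the spherical metric since $d_{\bC}$ is $1$-Lipschitz on $\R^n$. I will construct a nonnegative function $\tilde \rho \in C^1(U)$, where $U := \{\rho > 0\}$, satisfying $c^{-1} \rho \le \tilde\rho \le c \rho$ and $|D_{\sphere^{n-1}} \tilde\rho| \le c$. The $1$-homogeneous extension $\tilde d(x) := |x|\, \tilde\rho(x/|x|)$ (and $\tilde d(0) := 0$) is then $C^1$ on $\{\tilde d > 0\}$ and inherits the two-sided bound; the bound $|D\tilde d| \le c$ follows because $D\tilde d$ is $0$-homogeneous on $\{\tilde d > 0\}$, so checking it reduces to the sphere, where $|D\tilde d|^2 = \tilde\rho^2 + |D\tilde\rho|^2 \le 2 c^2$.

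To produce $\tilde\rho$, I will perform a Whitney-type smoothing at the local scale of $\rho$. Applying the $5r$-Vitali covering lemma to the collection $\{B_{\rho(\omega)/10}(\omega) : \omega \in U\}$, extract a disjoint subfamily $\{B_j := B_{r_j}(\omega_j)\}$ with $r_j := \rho(\omega_j)/10$ such that $\{5 B_j\}$ still covers $U$. Since $\rho$ is $1$-Lipschitz, the radii $r_j$ vary slowly on overlaps, and a standard argument yields that $\{5 B_j\}$ has multiplicity bounded by some constant $N(n)$. I then choose a smooth partition of unity $\{\psi_j\}$ on $U$ subordinate to $\{5 B_j\}$ with $|D\psi_j| \le c/r_j$, and set
\[
\tilde\rho(\omega) := \sum_j \psi_j(\omega)\, \rho(\omega_j).
\]

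This function is smooth on $U$. Whenever $\omega \in 5 B_j$ the Lipschitz bound on $\rho$ gives $|\rho(\omega_j) - \rho(\omega)| \le 5 r_j = \rho(\omega_j)/2$, so $\rho(\omega_j) \simeq \rho(\omega)$, and taking the convex combination yields $\tilde\rho \simeq \rho$. For the gradient bound I differentiate term by term and exploit the identity $\sum_j D\psi_j \equiv 0$ on $U$ to write
\[
D\tilde\rho(\omega) = \sum_j D\psi_j(\omega)\, \bigl(\rho(\omega_j) - \rho(\omega)\bigr),
\]
each summand bounded in norm by $(c/r_j)(5 r_j) = 5c$; bounded multiplicity then gives $|D\tilde\rho| \le 5 c N$. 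The only mildly technical step is the Whitney/Vitali bookkeeping required to control the multiplicity of $\{5 B_j\}$, but this is a standard argument for Lipschitz-controlled radii and uses no information about $\bC$ beyond $1$-homogeneity.
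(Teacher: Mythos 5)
Your proof is correct, and it takes a genuinely different route from the paper's. The paper exploits the explicit polyhedral structure of $\bC_0$: it invokes the fact (from Allard--Almgren) that $\bC_0 \cap \mathbb{S}^{1+k}$ is a geodesic net, reduces first to a $1$-dimensional cone whose spherical cross-section is a finite point set, and then smooths $d_{\bC_0}/|x|$ by hand only in small neighborhoods of the vertices and cut loci, leaving it untouched elsewhere; the extension $\tilde d(x,y) = |x|\psi(x/|x|)$ is then manifestly $C^1$ away from the smoothed patches. Your Whitney-type construction -- a Vitali family at the self-improving scale $\rho(\omega)/10$, a subordinate partition of unity, and the averaged function $\tilde\rho = \sum_j \psi_j\,\rho(\omega_j)$, with the gradient bound extracted from the cancellation $\sum_j D\psi_j \equiv 0$ -- uses nothing beyond $1$-Lipschitzness of the distance function. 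Consequently it applies verbatim to an arbitrary closed cone (no polyhedral or minimal structure needed), yields $C^\infty$ rather than merely $C^1$ regularity on $\{\tilde d > 0\}$, and produces constants depending only on dimension rather than on $\bC$. That is strictly stronger than, and subsumes, what the paper needs. One small bookkeeping imprecision: with the $5r$-Vitali lemma, the partition of unity with gradient bounds $\lesssim 1/r_j$ should be taken subordinate to, say, $\{10B_j\}$ (or one should run Vitali with a slightly larger dilation), since one needs a little slack between the balls that cover and the supports of the bumps; the multiplicity and radius-comparability arguments go through unchanged, so this does not affect the conclusion.
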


\begin{proof}
We first consider a $1$-dimensional stationary cone $\tilde \bC_0^1 \subset \R^{1+k}$, so that $\partial \tilde \bC_0 \cap \bbS^{k}$ is a finite collection of points.  By smoothing $d_{\tilde \bC_0}/|x|$ at the approrpriate cut-locii, we can easily obtain a $\phi : \bbS^{k} \to \R$ so that $\phi$ is $C^1$ on $\{ \phi > 0 \}$, and
\begin{gather}
\frac{1}{2}  \frac{d_{\bC_0}}{|x|} \leq \phi(x/|x|) \leq 2 \frac{d_{\bC_0}}{|x|} , \quad \text{ and } \quad |D\phi| \leq 10.
\end{gather}

Now we consider the polyhedral cone $\bC_0^2 \subset \R^{2+k}$.  Recall that by \cite{AllAlm2}, $\bC_0 \cap \bbS^{1+k}$ consists of finitely many geodesic arcs.  By applying the previous paragraph to a small neighborhood of every vertex, we can construct a function $\psi : \bbS^{1+k} \to \R$ which satisfies:
\begin{gather}
\frac{1}{c(\refC)} \frac{d_{\bC_0}}{|x|} \leq \psi(x/|x|) \leq c(\refC) \frac{d_{\bC_0}}{|x|}, \quad \psi \text{ is $C^1$ on } \{ \psi > 0 \}, \quad |D\psi| \leq c(\refC).
\end{gather}
Now set $\tilde d(x, y) = |x|\psi(x/|x|)$.
\end{proof}

\begin{prop}\label{prop:decay-growth-est}
For any $\tau > 0$ and $\alpha \in (0, 1)$, there is an $\eps(\refC, \tau)$ so that the following holds.  Let $M \in \cN_{\eps}(\refC)$, and let us decompose
\begin{gather}
M \cap B_{3/4} \setminus B_\tau(\axis) = \cup_{i=1}^d M(i) = \graph_{\bC}(u, f, \Omega)
\end{gather}
as in Lemma \ref{lem:poly-global-graph}.  Then we have
\begin{align}
&\int_{M \cap B_1} \frac{d_{\bC}^2}{R^{n+2-\alpha}} + \sum_{i=1}^d \int_{\Omega(i) \cap B_{1/10} \setminus B_\tau(\axis)} R^{2-n} |\partial_R (u(i)/R)|^2 \\
&\quad \leq c(\refC, \alpha) \int_{M \cap B_{1/10}} \frac{|X^\perp|^2}{R^{n+2}} +  c(\refC, \alpha) E(M, \bC, 1). 
\end{align}
Here $X^\perp = \pi_{N_XM}(X)$ is the projection to the normal boundle of $M$.
\end{prop}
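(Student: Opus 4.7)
The plan is to apply Simon's weighted stationarity test: test the first variation $\int_M \mathrm{div}_M \xi = 0$ of the stationary $M$ against a radial vector field weighted by the squared smoothed distance $\tilde d^{\,2}$ from Lemma~\ref{lem:smoothing-d}, and then combine with a Hardt--Simon pointwise identity to extract radial control of each $u(i)$.

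Fix a smooth cutoff $\phi:[0,\infty)\to[0,1]$ with $\phi\equiv 1$ on $[0,1/2]$ and $\mathrm{spt}\,\phi\subset[0,1]$, and test stationarity with
$$\xi(X) = \phi(R)\,R^{-n-2+\alpha}\,\tilde d^{\,2}(X)\,X, \qquad R=|X|.$$
This is Lipschitz since $\nabla\tilde d^{\,2}=2\tilde d\nabla\tilde d$ vanishes on $\{\tilde d=0\}$. Using $\mathrm{div}_M X = n$ (stationarity), $X^T\!\cdot\nabla R = R - |X^\perp|^2/R$, and Euler's identity $X\cdot\nabla\tilde d = \tilde d$ (from $1$-homogeneity), a direct computation combined with $\int_M \mathrm{div}_M\xi = 0$ yields
\begin{align*}
\alpha\!\int_M \phi\,\frac{\tilde d^{\,2}}{R^{n+2-\alpha}} + (n{+}2{-}\alpha)\!\int_M \phi\,\frac{|X^\perp|^2\,\tilde d^{\,2}}{R^{n+4-\alpha}}
&= 2\!\int_M \phi\,\frac{\tilde d\,(X^\perp\!\cdot\nabla\tilde d)}{R^{n+2-\alpha}} \\
&\quad -\!\int_M \phi'(R)\,\frac{\tilde d^{\,2}\bigl(1-|X^\perp|^2/R^2\bigr)}{R^{n+1-\alpha}}.
\end{align*}
Both terms on the left are nonnegative. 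Cauchy--Schwarz on the cross term, using $|\nabla\tilde d|\leq c(\refC)$ and $R^{-n-2+\alpha}\leq R^{-n-2}$ on $\mathrm{spt}\,\phi$, bounds it by $\delta\!\int_M \phi\,\tilde d^{\,2}/R^{n+2-\alpha} + (c/\delta)\!\int_M |X^\perp|^2/R^{n+2}$; the cutoff term is supported where $R\in[1/2,1]$ and is controlled by $c\,E(M,\bC,1)$ via $\tilde d\sim d_\bC$. Choosing $\delta$ small, absorbing into the LHS, and using $d_\bC\leq c\,\tilde d$ yields the $d_\bC^{\,2}$-bound on $B_{1/2}$; on $B_1\setminus B_{1/2}$ the integrand is bounded directly by $c\,E(M,\bC,1)$.

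For the $\partial_R(u(i)/R)$ bound, parametrize $M(i)$ as $\Phi(x',y)=(x',y)+u(i)(x',y)$ over $\Omega(i)\subset\bC$. The radial curve $r\mapsto\Phi(r\omega,y)$ lies in $M$, so $\partial_R\Phi = \omega+\partial_R u(i)$ is tangent to $M$. Decomposing
$$X = \Phi = R\,\partial_R\Phi + \bigl(u(i)-R\,\partial_R u(i)\bigr) = R\,\partial_R\Phi - R^{\,2}\,\partial_R\!\bigl(u(i)/R\bigr)$$
and projecting normally (the first piece is tangent and vanishes) gives $X^\perp = -R^{\,2}\,[\partial_R(u(i)/R)]^\perp$. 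Since $u(i)$ takes values in $(\bC_0)^\perp$, so does $\partial_R(u(i)/R)$; by Lemma~\ref{lem:poly-graph} and the $\eps$-smallness, $T_XM$ is within a small angle of $T_X\bC$, so projection onto $(T_XM)^\perp$ loses at most a factor $\sqrt{2}$ on $(\bC_0)^\perp$. Thus $|\partial_R(u(i)/R)|^2\leq 2|X^\perp|^2/R^{\,4}$ pointwise; multiplying by $R^{2-n}$, changing variables back to $M(i)$ (Jacobian bounded below), and summing over $i$ yields the claimed combined estimate.

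The main technical point is the algebra of the divergence computation: the clean coefficient $\alpha$ on $\tilde d^{\,2}$ arises from the cancellation $(-n-2+\alpha)+2+n=\alpha$ produced by balancing the radial weight against $\mathrm{div}_M X = n$ and the $1$-homogeneity of $\tilde d^{\,2}$ (via Euler). The remaining delicacy is that the mismatch $R^{-n-2+\alpha}$ versus $R^{-n-2}$ in the $|X^\perp|^2$ error term is absorbed only because $R\leq 1$ on $\mathrm{spt}\,\phi$; this is why the estimate is formulated on bounded balls rather than in scale-invariant form.
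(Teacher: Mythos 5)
Your approach is essentially the paper's: test the first variation with a radial vector field weighted by $\tilde d^2 R^{-n-2+\alpha}$ from Lemma~\ref{lem:smoothing-d}, absorb the cross-term by Cauchy--Schwarz, and combine with the Hardt--Simon identity $|\partial_R(u(i)/R)|^2 \leq 2|X^\perp|^2/R^4$. Your divergence algebra is correct — expanding via Euler's identity on $\tilde d^2$ and $R^{-n-2+\alpha}$ separately yields the same identity as the paper's route via the $0$-homogeneous $(\tilde d/R)^2$ once one expands $X^\perp\cdot\nabla(\tilde d/R)^2$ — and the Hardt--Simon step matches the paper's argument exactly.

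However, there is one genuine gap you should not wave away. Your vector field $\xi = \phi(R)R^{-n-2+\alpha}\tilde d^2 X$ is \emph{not} Lipschitz at the origin: while $\nabla\tilde d^2 = 2\tilde d\nabla\tilde d$ does vanish on $\{\tilde d=0\}$, the issue at $X=0$ is the inverse power of $R$, and on $M$ one only has $\tilde d \lesssim R$, giving $|\xi| \lesssim R^{1-n+\alpha}$ and $|\nabla\xi| \lesssim R^{-n+\alpha}$, which blows up. So the first variation formula is not a priori justified. The paper handles this by choosing a cutoff $\zeta$ that vanishes identically near $0$ (on a $\delta$-ball) and then sending $\delta\to 0$; this requires the separate integrability estimate \eqref{eqn:integrable-on-M}, via the layer-cake formula and the mass bound $\theta_M(0,r)\leq c(\refC)r^n$, showing that the error term from the inner cutoff tends to zero. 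You should include this truncation and the accompanying integrability argument — it is not merely a technicality, since the conclusion of the proposition is itself the integrability of $d_\bC^2/R^{n+2-\alpha}$ near the origin.

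Two smaller points: (i) your cutoff $\phi \equiv 1$ on $[0,1/2]$ with support in $[0,1]$ produces $\int_{M\cap B_1}|X^\perp|^2/R^{n+2}$ rather than the statement's $\int_{M\cap B_{1/10}}|X^\perp|^2/R^{n+2}$; either shrink the cutoff to match the paper's (support in $B_{1/10}$, $\equiv 1$ on $[\delta,1/20]$), or add an argument that the annular contribution $\int_{M\cap B_1\setminus B_{1/10}}|X^\perp|^2/R^{n+2}$ is controlled by $E(M,\bC,1)$ using the graphical decomposition and $|X^\perp_M| \leq c(|u| + r|Du|)$. (ii) You invoke stationarity $\int_M\mathrm{div}_M\xi = 0$, but $\cN_\eps(\refC)$ allows bounded mean curvature; you should carry along the $\int_M \xi\cdot H_M$ term and note that, after truncation, $|\xi|\leq R^{-n+1}$ on $\mathrm{spt}\,\phi$ and the resulting integral is $\leq c\|H_M\|_{L^\infty(B_1)} \leq cE(M,\bC,1)$.
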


\begin{proof}
Let $\zeta$ be a smoothing of the function which is $\equiv 1$ on $[\delta, 1/20]$, $0$ at $0$ and $\equiv 0$ on $B_1 \setminus B_{1/10}$, and linearly interpolates in between.  Consider the vector field
\begin{gather}
V(X) = \zeta^2 (\tilde d/R)^2 R^{-n+\alpha} X,
\end{gather}
where $\tilde d$ as in Lemma \ref{lem:smoothing-d}.

Since $(\tilde d/R)^2$ is $C^1$ and homogeneous degree-$0$, we have
\begin{gather}
X \cdot D (\tilde d/R)^2 = 0.
\end{gather}
We also have $|D(\tilde d/R)| \leq c(\refC)/R$.  We therefore calculate
\begin{align}
div(V) &\geq -2\zeta |\nabla^T\zeta| (\tilde d/R)^2 R^{-n+1+\alpha} -2\zeta^2 (\tilde d/R) |\nabla^\perp (\tilde d / R)| |(x, y)^\perp| \\
&\quad + \zeta^2(\tilde d/R)^2 (n-\alpha) R^{-n-2+\alpha} |(x, y)^\perp|^2 + \zeta^2 (\tilde d/R)^2 R^{-n+\alpha} \alpha .
\end{align}
And so, using \eqref{eqn:tilde-d}, we have for any $\eta$
\begin{align}
\frac{\alpha}{c(\refC)} \int_M \zeta^2 (d_{\bC}/R)^2 R^{-n+\alpha} 
&\leq \int_M |V| |H_M| +  \int_M \eta \zeta^2 (d_{\bC}/R)^2 R^{-n+\alpha} + c(\eta, \refC) \zeta^2 |(x, y)^\perp|^2 R^{-n-2+\alpha} \\
&\quad + \int_M \eta \zeta^2 (d_{\bC}/R)^2 R^{-n+\alpha} + c(\eta, \refC) d_{\bC}^2 R^{-n+\alpha} |\nabla^T \zeta|^2 .
\end{align}
Take $\eta(\refC, \alpha)$ sufficiently small, and use that $|V| \leq R^{-n+1}$ in a computation similar to \eqref{eqn:integrable-on-M}.  We obtain
\begin{gather}
\int_{M \cap B_1} d^2_{\bC} R^{-n-2+\alpha} \leq c ||H_M||_{L^\infty(B_1)} + c \int_{M \cap B_{1/10}} |(x, y)^\perp|^2 R^{-n-2+\alpha} + c \int_M d_{\bC}^2 R^{-n+\alpha} |\nabla^T \zeta|^2,
\end{gather}
for $c = c(\alpha, \bC)$.

We analyze the last term:
\begin{align}
\int_M d_{\bC}^2 R^{-n+\alpha} (\zeta')^2 |(x, y)^T|^2/R^2
&\leq c\int_{M \cap B_\delta} d_{\bC}^2 R^{-n+\alpha} \delta^{-2} + c \int_{M \cap (B_{1/10}\setminus B_{1/20})} d_{\bC}^2 R^{-n+\alpha} \\
&\leq \int_{M \cap B_\delta} R^{-n+\alpha} + c 100^{\alpha-n} \int_{M \cap (B_1 \setminus B_{1/20})} d_{\bC}^2 ,
\end{align}
where $c$ is an absolute constant.  Using the standard layer cake formula, and the monotonicity $\theta_M(0, r) \leq c(\refC) r^n$, we have
\begin{align}\label{eqn:integrable-on-M}
\int_{M \cap B_\delta} R^{-n+\alpha}
&\leq \int_0^\infty \haus^n( M \cap B_\delta \cap \{r^{-n+\alpha} > t\} ) dt \\
&\leq c(\refC) \int_{\delta^{\alpha-n}}^\infty t^{-n/(n-\alpha)} dt \\
&= c(\alpha, \refC) \delta^\alpha \\
& \to 0 \quad \text{ as } \delta \to 0.
\end{align}
This proves the first inequality.

We now prove the second.  It will suffice to prove the pointwise bound
\begin{gather}
|\partial_R(u(i)/R)|^2 \leq 2 |X^\perp|^2/R^4.
\end{gather}
Let us write $(x, y) \in M(i)$ as $(x, y) = (x', y) + u(i)(x', y)$, for $(x', y) \in \Omega(i) \cap B_{1/10} \setminus B_\tau(\axis)$.  For ease of notation let us drop the $i$ index from now on.

We compute
\begin{gather}
\partial_R (u(x', y)/R) = \partial_R \left( \frac{ (x', y) + u(x', y))}{R} \right) = \frac{(x', y) + (x', y) \cdot Du(x', y)}{R^2} - \frac{(x, y)}{R^2},
\end{gather}
and observe that $\partial_R ( (x', y) + u(i)(x', y)) \in T_{(x, y)}M$.  Therefore, we deduce
\begin{gather}
\pi_{N_{(x, y)}M}\left( \partial_R( u(x', y)/R ) \right) = - \frac{\pi_{N_{(x, y)}M}(x, y)}{R^2}.
\end{gather}

This is the required expression, but only for the normal component.  Since $\partial_R (u(x', y)/R) \in T_{(x', y)} \bC$, and $M$ is $C^1$-close to $\bC$, we can show the tangential component is negligable:
\begin{align}
|\partial_R(u(x', y)/R)|^2
&= |\pi_{N_{(x, y)}M}(\partial_R(u(x', y)/R))|^2 + |(\pi_{T_{(x, y)}M} - \pi_{T_{(x', y)}\bC})(\partial_R(u(x', y)/R))|^2 \\
&\leq |\pi_{N_{(x, y)}M}(x, y)|^2/R^4 + c(n, k) |Du(x', y)|^2 |\partial_R(u(x', y)/R)|^2 \\
&\leq |\pi_{N_{(x, y)}M}(x, y)|^2/R^4 + \frac{1}{2} |\partial_R(u(x', y)/R)|^2,
\end{align}
provided $\eps(n, k)$ is sufficiently small.
\end{proof}

\subsection{Density drop} By far the trickiest and most important estimate is estimating the density drop in terms of $L^2$ distance.  We are in effect bounding $W^{1,2}$ by $L^2$.

\begin{proposition}\label{prop:density-est}
There is an $\eps(\refC)$ so that the following holds.  Let $M \in \cN_{\eps}(\refC)$, and suppose $\theta_M(0) \geq \theta_{\refC}(0)$.  Then we have
\begin{gather}
\int_{M \cap B_{1/10}} \frac{|X^\perp|^2}{R^{n+2}} \leq c(\refC) E(M, \bC, 1) .
\end{gather}
\end{proposition}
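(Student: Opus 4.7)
The strategy is to reinterpret the left-hand side as a mass-excess quantity via monotonicity, and then bound the mass excess using first variation tested against a cylindrical vector field, which converts the problem into a gradient bound that yields to a reverse Poincaré argument.

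\textbf{Reduction to mass excess.} The standard monotonicity identity for (approximately) stationary $M$ gives
\[
\int_{M \cap B_{1/10}} \frac{|X^\perp|^2}{R^{n+2}} = \omega_n\bigl[\theta_M(0, 1/10) - \theta_M(0)\bigr] + \mathrm{(h.c.\ error)}.
\]
Since $\refC$ is a cone, $\theta_\refC(0) = \theta_\refC(0, 1/10)$, so the hypothesis $\theta_M(0) \geq \theta_\refC(0)$ yields
\[
\int_{M \cap B_{1/10}} \frac{|X^\perp|^2}{R^{n+2}} \leq c(\refC)\bigl[\mu_M(B_{1/10}) - \mu_\refC(B_{1/10})\bigr] + \mathrm{(h.c.\ error)}.
\]
It thus suffices to prove the mass-excess bound $\mu_M(B_{1/10}) - \mu_\refC(B_{1/10}) \leq c(\refC)\, E(M, \bC, 1)$.

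\textbf{Cylindrical first variation.} Test the stationarity of $M$ against $V(X) = \phi^2(R)(x, 0)$, where $\phi$ is a smooth radial cutoff supported in $B_1$ with $\phi \equiv 1$ on $B_{1/10}$. A direct computation gives
\[
\mathrm{div}_M V = \phi^2\, \mathrm{tr}_{T_XM}(\pi_x) + 2\phi\phi'(R)\, \nabla_M R \cdot (x, 0),
\]
and the elementary identity $\mathrm{tr}_{T_XM}(\pi_x) = 2 + \sum_\alpha |\pi_y \nu_\alpha|^2$, for any orthonormal frame $\{\nu_\alpha\}$ of $N_XM$, shows that the extra term vanishes identically on $\refC$ and is bounded pointwise by $c|Du|^2$ on the graphical region by the effective estimates of Lemma~\ref{lem:poly-graph}. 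Subtracting the analogous identity for $\refC$, the $\phi^2$ piece yields $\mu_M(B_{1/10}) - \mu_\refC(B_{1/10})$ on the left, producing a schematic inequality
\[
\mu_M(B_{1/10}) - \mu_\refC(B_{1/10}) \lesssim \int_M \phi^2 |Du|^2 + \mathrm{flux}_{\mathrm{ann}} + \mu_M(B_{1/10} \cap B_\tau(\axis)),
\]
where $\mathrm{flux}_{\mathrm{ann}}$ is a transition-annulus contribution supported on $\{\phi' \neq 0\} \subset B_1 \setminus B_{1/10}$, involving $|u|$, $|Du|$, and $|X^\perp||x|/R$.

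\textbf{Control of errors.} The transition-annulus $|X^\perp|$ term is handled via Cauchy-Schwarz together with $|X^\perp| \leq c\, d_\refC$ on the graphical portion and $\int d_\refC^2 \leq c E$, yielding a bound by $cE$; the $|u|,|Du|$ annular contributions are bounded directly by $cE$. The gradient integral $\int \phi^2 |Du|^2$ is controlled by $cE$ by a Caccioppoli-type reverse Poincaré inequality applied on dyadic annuli $B_{2\tau_j}(\axis) \setminus B_{\tau_j}(\axis)$ about the spine, combined with $|u|^2 \leq c\, d_\refC^2$. Finally, the spine mass $\mu_M(B_\tau(\axis) \cap B_{1/10}) \leq c\tau^2$ by monotonicity and the codimension-$2$ of $\axis$ in $\R^{n+k}$; sending $\tau \to 0$ removes this contribution.

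\textbf{Main obstacle.} The crux is the Caccioppoli bound for $|Du|^2$ uniformly up to the spine. On a dyadic annulus at scale $\tau_j$, Caccioppoli converts $\int |Du|^2 \to \tau_j^{-2} \int |u|^2$, which is precisely scale-critical against the $L^2$ excess $\int d_\refC^2$ at scale $\tau_j$; the dyadic summation is therefore borderline and relies essentially on non-concentration at the spine. This non-concentration is guaranteed by the density hypothesis $\theta_M(0) \geq \theta_\refC(0)$ together with the $L^\infty$-closeness $M \in \cN_\eps(\refC)$, and is precisely what makes the sign of the mass excess and the Caccioppoli summation align.
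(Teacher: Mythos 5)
Your opening move is right: converting $\theta_M(0) \geq \theta_\refC(0)$ into a mass-excess bound via monotonicity is exactly the paper's \eqref{eqn:lem-density-1}, and the cylindrical field $\phi^2(R)(x,0)$ is the correct test field. But your schematic inequality puts the gradient term on the wrong side. Spelling out the first variation with the identity you quote, $\mathrm{tr}_{T_XM}(\pi_x) = 2 + \sum_\alpha |\pi_y \nu_\alpha|^2$, the quantity $\int_M \phi^2 \sum_\alpha |\pi_y \nu_\alpha|^2 \geq 0$ sits on the \emph{same} side of the identity as $\ell\bigl(\int_M \phi^2 - \int_\refC \phi^2\bigr)$, so it can simply be discarded with the correct inequality direction intact. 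That is precisely the paper's \eqref{eqn:lem-density-2}: the mass excess is bounded by the annulus-supported flux terms alone, supported on $\{\phi' \neq 0\}\subset\{R \in [1/10,2/10]\}$, and no control of $\int_{B_{1/10}} |Du|^2$ is ever needed. Treating $\sum_\alpha |\pi_y \nu_\alpha|^2$ as an error to be absorbed is a sign mistake, and it is exactly what forces you into the Caccioppoli argument.

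That Caccioppoli step is a genuine gap, and you flag it yourself. With only the global bound $\int |u|^2 \leq c\,E$, the dyadic sum $\sum_j \tau_j^{-2}\int_{A_j}|u|^2$ diverges; to make it converge you need scale-invariant $L^2$ decay of $u$ toward the spine, which is precisely the content of Theorem \ref{thm:l2-est} that this proposition is needed to prove. The appeal to ``non-concentration at the spine, guaranteed by the density hypothesis'' is circular. The hypothesis $\theta_M(0)\geq\theta_\refC(0)$ is used only once, to fix the sign of the mass excess in \eqref{eqn:lem-density-1}; it does not by itself suppress gradient concentration at the spine.

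What you do still need, and what the proposal does not engage with, is the comparison of the annulus-supported fluxes $\int_M F r^2 - \int_{\bC} F r^2$ (with $F = \phi|\phi'|/R$) and the bound $\int_{M\cap B_{1/10}} |(x,0)^\perp|^2 \leq c\,E$. This is Lemma \ref{lem:density-workhouse}, and it is the technically substantive step, not something handled ``via Cauchy--Schwarz.'' The difficulty is that $\mu_M$ and $\mu_\bC$ live on different sets; one must transfer the integrals to the wedge domains $W(i)$ using the graphical decomposition of Lemma \ref{lem:poly-graph}, and the first-order boundary contribution along $\partial W(i)$ is only killed because of the junction cancellation $\sum_i n(i)\cdot f(i) = 0$ where the wedges meet, leaving a quadratic $O(|f|^2)$ remainder controlled by $E$. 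Your proof sketch passes over all of this.
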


\begin{proof}
Let $\phi$ be any smooth function, with $\phi' \leq 0$, $\phi = 1$ on $[0, 1/10]$, and $\phi = 0$ on $[2/10, \infty)$.  By the first variation formula, the structure $\bC = \bC_0 \times \R^m$, and our assumption that $\theta_M(0) \geq \theta_{\refC}(0)$, we have the following inequalities:
\begin{gather}\label{eqn:lem-density-1}
\frac{1}{2} n (1/10)^n \int_{M \cap B_{1/10}} \frac{|X^\perp|^2}{R^{n+2}} \leq \int_M \phi^2(R) - \int_{\bC} \phi^2(R) + c(\bC, \phi) ||H_M||_{L^\infty(B_1)},
\end{gather}
and
\begin{align}\label{eqn:lem-density-2}
\ell \left( \int_M \phi^2(R) - \int_{\bC} \phi^2(R) \right) &\leq \left( \int_M 2\phi |\phi'| r^2/R - \int_{\bC} 2\phi |\phi'| r^2/R \right) \\
&\quad  + \int_M 2\phi (\phi')^2 |(x, 0)^\perp|^2 + c(\bC, \phi) ||H_M||_{L^\infty(B_1)}.
\end{align}
See \cite[pages 613-615]{simon1} for a derivation; relations \eqref{eqn:lem-density-1}, \eqref{eqn:lem-density-2} require no special structure on $\bC_0$.  Note that \cite{simon1} proves \eqref{eqn:lem-density-1}, \eqref{eqn:lem-density-2} for stationary surfaces, but the modification for bounded mean curvature is straightforward -- for completeness we provide a brief sketch in Section \ref{sec:variation}.

The Proposition will now follow by Lemma \ref{lem:density-workhouse}, because if we write
\begin{gather}
F(x, y) \equiv F(R) = \phi(R) |\phi'(R)| /R,
\end{gather}
then on $\spt F$ we have that $F$ is a smooth function of $x, y$.
\end{proof}

\begin{lemma}\label{lem:density-workhouse}
There is an $\eps(\refC)$ so that the following holds.  Let $M \in \cN_{\eps}(\refC)$, and let $F(x, y) \equiv F(R)$ be a non-negative $C^1$ function supported on $R \in [1/10, 2/10]$.

Then we have
\begin{equation}\label{eqn:F-diff-est}
\int_M r^2 F - \int_{\bC} r^2 F \leq c(\refC, |F|_{C^1}) E(M, \bC, 1). 
\end{equation}

And relatedly, we have
\begin{equation}\label{eqn:F-single-est}
\int_{M \cap B_{1/10}} |(x, 0)^\perp|^2 \leq c(\refC) E(M, \bC, 1). 
\end{equation}
\end{lemma}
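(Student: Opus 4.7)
My plan is to reduce both estimates to explicit area-formula computations on the polyhedral graphical decomposition, exploiting a pointwise orthogonality between the normal offset and each wedge, and then to control the resulting quadratic integral via a Caccioppoli inequality.

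For $\eps = \eps(\refC)$ small, I apply Lemma~\ref{lem:poly-global-graph} with some $\tau = \tau(\refC) > 0$ to write $M \cap \spt F \setminus B_\tau(\axis) = \bigcup_{i=1}^d \graph_\bC(u(i), \Omega(i))$. The key observation is that $u(i)$ takes values in $N_{W(i) \times \R^m}\bC = W(i)^\perp \times \{0\}$, so writing $u(i) = (u^x(i), 0)$ gives the pointwise orthogonality $x' \cdot u^x(i) = 0$ for $x' \in W(i)$. This yields the \emph{exact} identities
\begin{gather*}
r^2(X' + u(i)) = r^2(X') + |u^x(i)|^2, \qquad R^2(X' + u(i)) = R^2(X') + |u^x(i)|^2,
\end{gather*}
so in particular no linear-in-$u$ terms survive. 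Combined with the area formula, the hypothesis $F \in C^1$ (which gives $|F(R(X'+u)) - F(R(X'))| \leq c|F|_{C^1} |u|^2$ since $R \geq 1/10$ on $\spt F$), and $J(Du) = 1 + O(|Du|^2)$, a direct expansion shows
\begin{gather*}
\int_{M \setminus B_\tau(\axis)} r^2 F - \int_{\bC \setminus B_\tau(\axis)} r^2 F \leq c(\refC, |F|_{C^1}) \int_\Omega (|u|^2 + |Du|^2).
\end{gather*}

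I then bound $\int_\Omega |u|^2 \leq c E$ directly from $|u| = d_\bC$ on graphs, and $\int_\Omega |Du|^2 \leq c E$ via a Caccioppoli inequality: test the approximate minimal-surface system for $u$ against $u \phi^2$ with $\phi$ a suitable cutoff supported slightly inside $\Omega$, yielding $\int |Du|^2 \phi^2 \leq c \int |u|^2 |D\phi|^2 + c \|H_M\|^2$. The axis remainder in the one-sided difference is controlled by the bound $\int_{M \cap B_\tau(\axis) \cap \spt F} r^2 F \leq c\tau^4$ (using $r \leq \tau$ and $\haus^n(M \cap B_\tau(\axis) \cap \spt F) \leq c\tau^2$ by monotonicity), and observing that the corresponding subtraction from $\bC$ only helps; I close the argument by taking $\tau \to 0$ (with $\eps = \eps(\refC)$ fixed small enough to permit this limit in the graphical decomposition) and exploiting that $\tau^4 \to 0$, analogous to the $\delta \to 0$ step at the end of the proof of Proposition~\ref{prop:decay-growth-est}.

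For the second estimate \eqref{eqn:F-single-est}, on each graph I write $(x, 0) = (x', 0) + (u^x(i), 0)$: since $(x', 0) \in T_\bC$ one has $\pi_{N_\bC}(x', 0) = 0$ and hence $|\pi_{N_M}(x', 0)| \leq c|Du(i)| |x'|$, and since $(u^x(i), 0) \in N_\bC$ one has $|\pi_{N_M}(u^x(i), 0)| \leq c|u(i)|$. Therefore $|(x, 0)^\perp|^2 \leq c(|u|^2 + |Du|^2)$ on the graphical part (using $|x'| \leq 1/10$ on $B_{1/10}$), and integrating using the $W^{1,2}$ bound from the previous step together with the same $\tau \to 0$ treatment of the axis remainder yields \eqref{eqn:F-single-est}. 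The main obstacle I anticipate is the Caccioppoli step: the cutoff $\phi$ must vanish near both the edge-singular set of $\bC_0$ and the axis spine while the nonlinear corrections in the polyhedral graphical system must be absorbed uniformly in $\refC$. A secondary technicality is ensuring that the $\tau \to 0$ limit can be taken consistently with the $\eps$-smallness required by Lemma~\ref{lem:poly-global-graph} to extend the graphical decomposition.
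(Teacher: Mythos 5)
The orthogonality trick you identify — that $u(i) \perp W(i)$ kills the linear-in-$u$ terms in expanding $|x'+u(i)|^2$ and $R^2$ — is indeed the reason the difference is controlled by a quadratic integral, and this matches the paper. However, your proposal has a genuine gap that is in fact the heart of the paper's argument: you write
$\int_{M \setminus B_\tau(\axis)} r^2 F - \int_{\bC \setminus B_\tau(\axis)} r^2 F \leq c \int_\Omega (|u|^2 + |Du|^2)$
``by direct expansion,'' but this is not a direct expansion because the domains do not match. The graphical decomposition gives $M(i)$ as a graph of $u(i)$ over $\Omega(i)$, whereas $\bC$ is integrated over $W(i) \times \R^m$, and $\Omega(i) \neq W(i)\times\R^m$: the sheets of $M$ meet along a curve that is only approximately, not exactly, the edge $\partial W(i)$, so each $\Omega(i)$ is an ``approximate wedge'' whose boundary is offset by the boundary function $f(i)$. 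The difference $\sum_i \int_{\Omega(i)} F r^2 - \sum_i \int_{W(i)\times\R^m} F r^2$ produces a term \emph{linear} in $f$, and controlling it to quadratic order requires the exact cancellation $\sum_{j=1}^3 n(j)\cdot f(j) = 0$ along each shared edge (a consequence of the three sheets $M(1),M(2),M(3)$ actually meeting on a common curve together with $\sum_j n(j)=0$). The paper then converts the domain discrepancy to a boundary-layer integral $\int_{\partial W(i)} |f(i)|^2$ via a coarea/slicing argument at fixed $y$. None of this appears in your plan, and without it the claimed bound does not follow.

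Two secondary issues. First, your Caccioppoli argument for $\int_\Omega |Du|^2 \leq cE$ is a detour the paper avoids: it simply cites the effective estimates supplied by the graphical decomposition lemma (Lemma~\ref{lem:poly-graph}), which already bound $\int |u|^2 + r^2|Du|^2$ and the edge term $\int_{\partial W(i)} r|f(i)|^2$ by $cE(M,\bC,1)$. A Caccioppoli inequality from scratch would, in any case, face boundary terms along $\partial\Omega(i)$ that again require the $\sum n(j) f(j)=0$ cancellation, so it does not sidestep the gap above. Second, your $\tau\to 0$ treatment of the axis remainder is not sound: $\tau$ is not a free test-function parameter (as $\delta$ is in Proposition~\ref{prop:decay-growth-est}) but the radius below which the graphical decomposition fails for the given $M$, so for fixed $\eps$ you cannot send it to zero. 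And even if you could, $\tau^4 \to 0$ does not yield $\tau^4 \leq c E(M,\bC,1)$. The paper instead uses the effective bound $\int_{M \cap B_{10 r_y}(\axis)} r^2 \leq c E(M,\bC,1)$ provided directly by Lemma~\ref{lem:poly-graph}, with $r_y$ the decomposition radius. The argument you give for \eqref{eqn:F-single-est} is essentially the paper's (project $(x,0)$ using $(x',0) \in T\bC$ and $u(i)\in N\bC$), but again it inherits the flawed axis treatment.
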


\begin{proof}
Let us prove \eqref{eqn:F-diff-est}.  Choosing $\eps$ sufficiently small, we have that $M \cap B_{3/4} \setminus B_{r_y}(\axis)$ decomposes as graphs over $\refC$ as in Lemma \ref{lem:poly-graph}, with $r_x \leq 1/100$ and $\beta \leq 1/10$.  So we have
\begin{align}
&\int_M F r^2 - \int_{\bC} F r^2  \nonumber\\
&\leq \sum_{i=1}^d \int_{\Omega(i) \cap (B_{1/2} \setminus B_{2r_y}(\axis))} F( (x' + u(i), y) |x' + u(i)|^2 Ju(i) - \sum_i \int_{W(i)}  F(x', y) |x'|^2 \nonumber \\
&\quad + \sum_{i=1}^d \int_{M(i) \cap B_{10r_y}(\axis)} |F|_{C^0} r^2 \label{eqn:diff-F-1} .
\end{align}

Since each domain $\Omega(i)$ is flat, each Jacobian is bounded by
\begin{gather}
Ju(i) \leq 1 + c |Du(i)|^2 \leq c.
\end{gather}
Further, since $u(i)(x', y) \in N_{(x', y)} \bC$, we have
\begin{align}
&F(x' + u(i), y) |x' + u(i)|^2 - F(x', y) |x'|^2  \\
&\leq |F|_{C^0} |u(i)|^2 + |F|_{C^1} \left( \sqrt{|x'|^2 + |u(i)|^2 + |y|^2} - \sqrt{|x'|^2 + |y|^2}\right) |x'|^2 \\
&\leq c|F|_{C^1} |u(i)|^2.
\end{align}

Using the above calculations, and $|u(i)(x', y)| \leq |x'|/10$, we have
\begin{align}
&\sum_{i=1}^d \int_{\Omega(i) \cap (B_{1/2} \setminus B_{2r_y}(\axis))} F(x' + u(i), y) |x' + u(i)|^2 Ju(i) \nonumber \\
&\leq c |F|_{C^1} \sum_{i=1}^d \int_{\Omega(i) \cap (B_{1/2} \setminus B_{2r_y}(\axis))} r^2|Du(i)|^2 + |u(i)|^2  + F(x', y). \label{eqn:diff-F-2}
\end{align}

Now by construction, if $x' \in \partial W(i)$, then
\begin{equation}\label{eqn:n-f-relation}
n(i)(x') \cdot f(i)(x') = n(i)(x') \cdot (z - x'),
\end{equation}
where $z \in \partial M(i)$.  In particular, if $W(1)$, $W(2)$, $W(3)$ all share a common edge, and $x$ lies in this edge, then
\begin{gather}
\sum_{i=1}^3 n(i)(x') \cdot f(i)(x') = 0.
\end{gather}
This follows simply because the $M(1)$, $M(2)$, $M(3)$ all share a common edge (and in particular $z$ in \eqref{eqn:n-f-relation} is independent of $i = 1, 2, 3$), and $\sum_{i=1}^3 n(i)(x') = 0$.

Let us fix a $y$, and recall that the annular region $A_W(r_y, 1/4)$ satisfies
\begin{equation}\label{eqn:A-inclusion}
(B_{1/4} \setminus B_{2r_y}) \cap W \subset A_W(r_y, 1/4) \subset (B_{1/2} \setminus B_{r_y}) \cap W.
\end{equation}
Let us write $\Omega_y(i) \equiv \Omega(i) \cap (\R^{\ell+k}\times \{y\})$, so that $\Omega_y(i)$ is a $2$-dimensional approximate-wedge.

By a similar argument as above, if $x' = s + f(i)(s) \in \partial\Omega_y(i)$ for $s \in \partial W(i)$, then
\begin{align}
&F(s + tf(i), y) |s + tf(i)|^2 - F(s, y) s^2 \\
&\leq |F|_{C^1} \left( \sqrt{ s^2 + t^2|f(i)|^2 + |y|^2} - \sqrt{s^2 + |y|^2} \right) s^2 + |F|_{C^0} t^2 |f(i)|^2 \\
&\leq c |F|_{C^1} t^2 |f(i)|^2.
\end{align}

For this fixed $y$, recalling that $\spt F \subset \{ R \in [1/10, 2/10]\}$, we have
\begin{align}
&\left| \sum_{i=1}^d \int_{\Omega_y(i) \cap A_{W(i)}(r_y, 1/4)} F(x', y)r^2 - \sum_{i=1}^d \int_{W(i) \cap A_{W(i)}(r_y, 1/4)} F(x', y)r^2 \right| \\
&= \left| \sum_{i=1}^d \int_{\partial W(i) \cap A_{W(i)}(r_y, 1/4)} (f(i)(s)\cdot n(i)(s)) \int_0^{1} F(s + tf(i)(s), y)|s + tf(i)(s)|^2 dt ds \right| \\
&= \left| \sum_{i=1}^d \int_{\partial W(i) \cap A_{W(i)}(r_y, 1/4)} (f(i)(s) \cdot n(i)(s)) \left( \int_0^{1} F(s + tf(i), y) |s + tf(i)|^2 - F(s, y) s^2 dt \right) ds  \right| \\
&\leq c \sum_{i=1}^d  \int_{\partial W(i) \cap A_{W(i)}(r_y, 1/4)} r |f(i)|^2 .
\end{align}

Integrating over $y$ (remember both $\Omega(i)$ and $W(i)$ are flat), and using \eqref{eqn:A-inclusion}, gives
\begin{align}
&\sum_{i=1}^d \int_{\Omega(i) \cap (B_{1/2}\setminus B_{2r_y}(\axis))} F(x', y) r^2 \label{eqn:diff-F-3} \\
&\leq \sum_{i=1}^d \int_{ (W(i) \times \R^m) \cap (B_{1/2}\setminus B_{r_y}(\axis))} F(x', y) r^2 + c\sum_{i=1}^d \int_{(\partial W(i) \times \R^m) \cap (B_{1/2} \setminus B_{r_y}(\axis))} r |f(i)|^2 . \nonumber
\end{align}

Combining the calculations \eqref{eqn:diff-F-1}, \eqref{eqn:diff-F-2}, \eqref{eqn:diff-F-3} with the effective estimates of Lemma \ref{lem:poly-graph}, we have
\begin{align}
&\int_M F r^2 - \int_C F r^2 \\
&\leq c \sum_{i=1}^d \int_{\Omega(i) \cap (B_{1/2} \setminus B_{r_y}(\axis))} |u(i)|^2 + r^2 |Du(i)|^2 \\
&\quad+ c \sum_{i=1}^d \int_{(\partial W(i) \times \R^m) \cap (B_{1/2} \setminus B_{r_y}(\axis))} r |f(i)|^2 \\
&\quad + \sum_{i=1}^d \int_{M(i) \cap B_{10r_y}(\axis)} F(x, y)r^2 - \sum_{i=1}^d \int_{(W(i)\times \R^m) \cap B_{r_y}(\axis)} F(x',y) r^2 \\
&\leq c E(M, \bC, 1) + c \sum_{i=1}^d \int_{M(i) \cap B_{10r_y}(\axis)} r^2 \\
&\leq c E(M, \bC, 1). 
\end{align}
This establishes \eqref{eqn:F-diff-est}.

We prove \eqref{eqn:F-single-est}.  Take $\eps$ as before.  We make an initial computation.  Suppose $(x', y) \in \Omega(i)$, and $(x, y) = (x', y) + u(i)(x', y) \in M(i)$.  Write $\pi_{M(i)^\perp}$ for the orthogonal projection onto $N_{(x, y)}M(i)$, and $\pi_{P(i)^\perp}$ for the orthogonal projection to $P(i)^\perp$.  Then we have
\begin{align}
|\pi_{M(i)^\perp}(x, 0)|
&= |\pi_{M(i)^\perp}(x, 0) - \pi_{P(i)^\perp}(x, 0)| + |\pi_{P(i)^\perp}(x - x', 0)| \\
&\leq c|x| |Du(i)(x', y)| + |u(i)(x', y)|.
\end{align}

We deduce that
\begin{align}
&\int_{M \cap B_{1/10}} |(x, 0)^\perp|^2  \\
&\leq \sum_i \int_{\Omega(i) \cap (B_{1/2} \setminus B_{r_y}(\axis))} |\pi_{M(i)^\perp}(x' + u(i), 0)|^2 J u(i) + \sum_i \int_{M(i) \cap B_{10r_y}(\axis)} r^2 \\
&\leq \sum_i \int_{\Omega(i) \cap (B_{1/2} \setminus B_{r_y}(\axis))} c r^2 |Du(i)|^2 + c|u(i)|^2 + \sum_i \int_{M(i) \cap B_{10 r_y}(\axis)} r^2 \\
&\leq c E(M, \bC, 1) .  
\end{align}
This completes the proof Lemma \ref{lem:density-workhouse}.
\end{proof}

\subsection{Moving the point} We localize the $L^2$-decay estimate to a given singular point, and demonstrate that the singular set must lie close to $\axis$ at the scale of the excess.

\begin{prop}\label{prop:Z-est}
For any $\tau > 0$ and $\alpha \in (0, 1)$, there is an $\eps(\refC, \tau)$ so that the following holds.  Take $M \in \cN_{\eps}(\refC)$.  Then for any $Z = (\zeta, \eta) \in \sing(M) \cap B_{1/4}$ with $\theta_M(Z) \geq \theta_{\refC}(0)$, we have
\begin{gather}\label{eqn:est-Z-M}
|\zeta|^2 + \int_{M \cap B_{1/4}} \frac{d_{Z + \bC}^2}{|X - Z|^{n+2-\alpha}} \leq c(\refC, \alpha) E(M, \bC, 1) , 
\end{gather}
and if we write $L = \cup_i L(i)$ for the lines of $\bC_0$, then
\begin{gather}\label{eqn:est-Z-u}
\int_{\Omega(i) \cap B_{1/2} \setminus B_{\tau}(L \times \R^m)} \frac{|u(i) - \zeta^\perp|^2}{|X - Z|^{n+2-\alpha}} \leq c(\refC, \alpha) E(M, \bC, 1) .
\end{gather}
Here $M(i), \Omega(i)$ is the decomposition as in Lemma \ref{lem:poly-global-graph}
\end{prop}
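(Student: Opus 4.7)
The plan is to transport the global $L^2$-decay estimate \eqref{eqn:thm-point-est} from the origin to the singular point $Z$. Since $\bC = \bC_0 \times \R^m$ is translation-invariant in the $\R^m$ directions, $Z + \bC = (\zeta, 0) + \bC$, so the only real displacement of the reference cone is by $\zeta \in \R^{2+k}$. Everything hinges on first proving $|\zeta|^2 \leq c(\refC) E(M, \bC, 1)$, after which Theorem \ref{thm:l2-est} re-applied at $Z$ relative to $Z + \bC$ yields the conclusion.

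\emph{Step 1 (bound on $|\zeta|$).}  Chain three inequalities at a scale $\rho \sim 1/2$: density monotonicity at $Z$ (using $\theta_M(Z) \geq \theta_\bC(0)$) gives $\haus^n(M \cap B_\rho(Z)) \geq \omega_n \rho^n \theta_\bC(0)$; the varifold closeness estimate \eqref{eqn:F-diff-est} of Lemma \ref{lem:density-workhouse}, re-centered at $Z$, gives $\haus^n(M \cap B_\rho(Z)) \leq \haus^n(\bC \cap B_\rho(Z)) + c(\refC) E(M, \bC, 1)$; and a pure cone Taylor expansion gives $\haus^n(\bC \cap B_\rho(Z)) \leq \omega_n \rho^n \theta_\bC(0) - c(\refC) |\zeta|^2$. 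The last reduces, for each half-plane $W(i) \times \R^m$, to expanding the $n$-dimensional volume of its intersection with $B_\rho(Z)$ around $\zeta = 0$, giving a cost of order $|\zeta^\perp(i)|^2$ per wedge; the polyhedral non-flatness of $\bC_0$ then yields $\sum_i |\zeta^\perp(i)|^2 \geq c(\refC) |\zeta|^2$. Chaining the three inequalities produces $|\zeta|^2 \leq c(\refC) E$.

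\emph{Step 2 (transport the estimates).}  Since $d_{Z + \bC}(X) \leq d_\bC(X) + |\zeta|$ pointwise and $|\zeta|^2 \lesssim E$, one has $\int_{M \cap B_{1/2}(Z)} d_{Z+\bC}^2 \leq c(\refC) E(M, \bC, 1)$.  For $\eps$ small in terms of $\tau$ the translated surface $M - Z$ is then close enough to $\bC$ to allow application of Theorem \ref{thm:l2-est} at unit scale (after a trivial rescaling), and $\theta_M(Z) \geq \theta_\bC(0)$ supplies the density hypothesis. Its pointwise decay estimate gives
\[
\int_{M \cap B_{1/4}} \frac{d_{Z + \bC}^2}{|X - Z|^{n+2-\alpha}} \leq c(\refC, \alpha) E(M, \bC, 1),
\]
which together with the $|\zeta|^2$ bound is \eqref{eqn:est-Z-M}.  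For \eqref{eqn:est-Z-u}, note that on $\Omega(i) \cap B_{1/2} \setminus B_\tau(L \times \R^m)$ the closest piece of $Z + \bC$ to a graph point $(x', y) + u(i)(x', y)$ is the shifted wedge $Z + (W(i) \times \R^m)$ (the other wedges lie at distance $\gtrsim \tau \gg |\zeta|$), which is the constant-height graph $\zeta^\perp(i) := \pi_{W(i)^\perp}(\zeta)$ over $W(i) \times \R^m$. Hence $d_{Z + \bC}^2 \geq c|u(i) - \zeta^\perp(i)|^2$ up to a $(1 + O(|Du|^2))$ Jacobian factor, and \eqref{eqn:est-Z-u} follows from \eqref{eqn:est-Z-M}.

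The principal obstacle is the second ingredient of Step 1. The estimate \eqref{eqn:F-diff-est} in Lemma \ref{lem:density-workhouse} is stated centered at the origin, so re-centering at an arbitrary singular point $Z$ requires re-running the boundary-term calculation of Lemma \ref{lem:density-workhouse} with the cone no longer anchored at the basepoint, and checking that the wedge-edge cancellations $\sum_i n(i) \cdot f(i) = 0$ and the effective graphical estimates survive the translation.  Once that is in place, the quadratic gain $|\zeta|^2 \leq c E$ -- much sharper than the $|\zeta|^{n+2} \leq c E$ that naive ``$M$ cone-like, $\bC$ locally flat at scale $|\zeta|$'' comparisons would yield -- falls out of the second-order Taylor expansion of the cone mass around the spine.
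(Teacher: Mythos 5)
Your Step 2 (transporting the decay/density estimates to $Z$ once $|\zeta|^2 \lesssim E$ is known, then reading off \eqref{eqn:est-Z-u} from $|u(i) - \pi_{\bC^\perp}(\zeta)| = d_{Z+\bC}$ on the graphical region) is essentially identical to the paper's. The genuine divergence is in Step 1. The paper gets $|\zeta|^2 \leq c\,E$ from an $L^2$-distance comparison, not a mass comparison: on the graphical region away from $L \times \R^m$ one has the pointwise identity $|d_{\bC} - d_{Z+\bC}| = |\pi_{\bC^\perp}(\zeta)|$; a quantitative no-translational-symmetry bound \eqref{eqn:no-sym-bound} gives $\int_{\cup_i M(i) \cap B_\rho(Z)} |\pi_{\bC^\perp}(\zeta)|^2 \geq \delta_0 \rho^n |\zeta|^2$; and Propositions \ref{prop:decay-growth-est}, \ref{prop:density-est} applied at $Z$ against the \emph{translated} cone $Z + \bC$ give $\int_{M \cap B_\rho(Z)} d_{Z+\bC}^2 \leq c\,E + c\,\rho^{n+2-\alpha}|\zeta|^2$. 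Squeezing and taking $\rho$ small closes the loop. The key structural point is that the paper never needs to compare $M$ against the \emph{un-translated} cone $\bC$ on a ball centered at $Z$ — every application of the workhorse lemmas is a clean re-centering $M \mapsto M - Z$, $\bC \mapsto \bC$, where the lemma applies verbatim. Your volume-comparison route has a pleasant conceptual simplicity (Taylor expansion of cone mass around the vertex), but it forces exactly the comparison the paper's argument is designed to avoid.

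That is where the gap lies, and I think you underestimate it even while flagging it. First, \eqref{eqn:F-diff-est} is \emph{not} a mass comparison: it controls the $r^2$-weighted integral $\int_M F r^2 - \int_{\bC} F r^2$, and the $r^2$ weight is essential — it is what makes the non-graphical region near the spine contribute $O(r_y^{n+2}) \lesssim E$, and it is what makes the edge boundary terms integrable. The mass comparison in the paper is \eqref{eqn:lem-density-2}, a separate first-variation identity (cylindrical field $\phi^2(R)(x,0)$) which is itself anchored at the origin. Re-deriving that identity centered at $Z$ but against the cone $\bC$ anchored at $0$ is not a translation of anything in the paper; the variation field, the boundary normals, and the spine cutoff all mix the two centerings. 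Second, even granting such a re-derived identity, your intermediate inequality is too strong as stated: the ball $B_\rho(Z)$ is off-center for the graph, so the domain $\{(x',y) : (x',y) + u \in B_\rho(Z)\}$ differs from $\{(x',y) \in B_\rho(Z)\}$ by a thin shell whose signed mass has a leading term proportional to $\int_{\partial B_\rho(Z) \cap \bC} \langle \zeta, u\rangle$, which is $O(|\zeta|\,E^{1/2})$ and does not cancel. You would get $\haus^n(M \cap B_\rho(Z)) \leq \haus^n(\bC \cap B_\rho(Z)) + c(E + |\zeta| E^{1/2})$, which still yields $|\zeta|^2 \leq cE$ after Young, so the plan survives — but as written the chain is not tight and the "principal obstacle" is more than a re-run of the boundary-term cancellation; it is a new first-variation computation. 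The paper's version avoids all of this.
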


\begin{proof}
We first show the estimate
\begin{gather}
|\zeta|^2 \leq c(\refC) E(M, \bC, 1). 
\end{gather}

Take $\eps_2(\refC, \tau, \beta)$ as in Theorem \ref{lem:poly-global-graph}, with $\tau$ and $\beta \leq \tau/10$ to be specified.  So in particular, if we write $L = \cup_i L(i)$ for the union of lines of $\bC_0$, then we decompose
\begin{gather}\label{eqn:est-Z-graph}
M \cap B_{1/2} \setminus B_{10 \tau}(L \times \R^m) = \cup_i M(i),
\end{gather}
where each $M(i)$ is a graph of $u(i)$ over $W(i)$, with $|u(i)| \leq \beta |x|$.  An important but obvious consequence of Theorem \ref{lem:poly-global-graph} is that $|\zeta| \leq \tau$.

For simplicity let us take $\beta = \tau/10$.  Since $\bC$ is flat away from $B_{10\tau}(L \times \R^m)$, and $|\zeta| \leq \tau$, and each $M(i)$ has small $C^0$ norm, we have
\begin{gather}
|d_{\bC}(x, y) - d_{Z + \bC}(x, y)| = |\pi_{\bC^\perp}(\zeta)|
\end{gather}
for any $(x, y) \in M(i)$.  Here $\pi_{\bC^\perp}$ denotes the projection onto $N_{(x', y)} \bC$, where $(x, y) = (x', y) + u(i)(x', y)$.

Because we assume $\bC_0$ to have no additional symmetries, using a contradiction argument, can prove the existence of some $\delta_0(\refC) > 0$ so that, provided $\eps \leq \delta_0$, we have
\begin{gather}\label{eqn:no-sym-bound}
\int_{\bC \cap B_{1/4} \setminus B_{\delta_0}(L \times \R^m)} |a^\perp|^2 \geq 10 \delta_0 |a|^2 \quad \forall a \in \R^{2+k}\times \{0\},
\end{gather}
where $a^\perp$ at $(x', y) \in \bC$ is simply the projection to $N_{(x', y)}\bC$.

Pick $\rho$ small, but arbitrary.  Ensure $\tau \leq 2\delta_0(\refC) \rho \leq \rho/10$ and we obtain
\begin{align}
\int_{\cup_i M(i) \cap B_\rho(Z)} |\pi_{\bC^\perp}(\zeta)|^2
&\geq \frac{1}{10} \int_{\bC \cap B_{\rho/2}(Z) \setminus B_{\delta_0 \rho}(L \times \R^m)} |\zeta^\perp|^2 \\
&\geq \frac{\rho^n}{10} \int_{\bC \cap B_{1/4} \setminus B_{\delta_0}(L \times \R^m)} |\zeta^\perp|^2\\
&\geq \delta_0(\refC) |\zeta|^2 \rho^n,
\end{align}
where $\delta_0$ is \emph{independent} of $\rho$.  The first inequality follows from the graphical decomposition \eqref{eqn:est-Z-graph}.  The second inequality holds since $|\zeta| \leq \tau \leq \rho/10$.  The third inequality is \eqref{eqn:no-sym-bound}.

We can apply Propositions \ref{prop:decay-growth-est}, \ref{prop:density-est} to the point $Z$, and the cone $\bC + Z$, to deduce
\begin{gather}
\int_{M \cap B_{1/10}(Z)} \frac{d_{Z + \bC}^2}{|X - Z|^{n+2-\alpha}} \leq c \int_{M \cap B_1} d_{Z + \bC}^2 + c ||H_M||_{L^\infty(B_1)} \leq c E(M, \bC, 1) + c |\zeta|^2 .
\end{gather}

Combine the above two relations, to deduce
\begin{align}
\delta_0 \rho^n |\zeta|^2 
&\leq \int_{\cup_i M(i) \cap B_\rho(Z)} |\pi_{\bC^\perp}(\zeta)|^2  \\
&\leq \int_{M \cap B_\rho(Z)} d_{Z + \bC}^2 + \int_{M \cap B_\rho(Z)} d_{\bC}^2 \\
&\leq c E(M, \bC, 1) + c \rho^{n+2-\alpha} |\zeta|^2 .
\end{align}
where $c$ depends on $(\refC, \alpha)$ only (so, is independent of $\rho$ and $\tau$).  Choose $\rho$ small, and correspondingly ensure $\tau = \beta$ is sufficiently small, and we obtain the first part of \eqref{eqn:est-Z-M}.

To obtain the second estimate, apply Propositions \ref{prop:decay-growth-est}, \ref{prop:density-est} at $Z$, and then use the first part of \eqref{eqn:est-Z-M}:
\begin{align}
\int_{M \cap B_{1/4}} \frac{d_{Z + \bC}^2}{|X - Z|^{n+2-\alpha}} \leq c E(M, \bC, 1) + c |\zeta|^2 \leq c E(M, \bC, 1).
\end{align}

We prove the last estimate \eqref{eqn:est-Z-u}.  Take $(x, y) = (x', y) + u(i)(x', y) \in M(i) \cap B_{1/2} \setminus B_{\tau}(L\times \R^m)$.  From the bounds $|f(i)| \leq \beta |x|$ and $|u(i)| \leq \beta|x|$, we know that $(x', y) \in W(i)$ and
\begin{gather}
|u(i)(x', y) - \pi_{\bC^\perp}(\zeta)(x', y)| = d_{Z + \bC}(x, y).
\end{gather}
Now use the second part of \eqref{eqn:est-Z-M}, and the fact that the Jacobian has bound $1/2 \leq Ju(i) \leq 2$.
\end{proof}

\subsection{Estimates on the spine} Using the $\delta$-no-holes condition we can sum the estimates \ref{prop:Z-est} along the spine $\axis$.

\begin{prop}\label{prop:spine-est}
Given $\tau > 0$ and $\alpha \in (0, 1)$, there is an $\eps(\refC, \tau)$ so that the following holds.  Let $M \in \cN_{\eps}(\refC)$, and suppose that $M$ satisfies the $\tau/10$-no-holes condition w.r.t. $\refC$ in $B_{1/4}$.  Take $\alpha \in (0, 1)$.	

Then we have
\begin{gather}\label{eqn:spine-d-est}
\int_{M \cap B_{1/4}} \frac{d_{\bC}^2}{\max(r, \tau)^{2-\alpha}} \leq c(\refC, \alpha) E(M, \bC, 1),
\end{gather}
and if we write $L = \cup_{i=1}^{2d/3} L(i)$ for the lines of $\bC_0$, then 
\begin{gather}\label{eqn:spine-u-est}
\sum_{i=1}^d \int_{\Omega(i) \cap B_{1/4} \setminus B_\tau(L\times \R^m))} \frac{|u(i) - \kappa^\perp|^2}{\max(r, \tau)^{2+2-\alpha}} \leq c(\refC, \alpha) E(M, \bC, 1).
\end{gather}
Here $\kappa : (0,1]\times B_1^m \to \R^{2+k}\times\{0\}$ is a chunky function satisfying the bound $|\kappa|^2 \leq c(\refC, \alpha) E(M, \bC, 1)$.
\end{prop}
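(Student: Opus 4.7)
The plan is to invoke the $\tau/10$-no-holes condition to produce, at each dyadic scale $r_\nu \geq \tau$ and for every cube $Q_{\nu\mu}$ meeting $B^m_{1/4}$, a singular point $Z_{\nu\mu} = (\zeta_{\nu\mu}, \eta_{\nu\mu}) \in \sing(M)$ with $\theta_M(Z_{\nu\mu}) \geq \theta_{\refC}(0)$ and $\eta_{\nu\mu}$ near the center of $Q_{\nu\mu}$. Applying Proposition \ref{prop:Z-est} at each $Z_{\nu\mu}$ yields both $|\zeta_{\nu\mu}|^2 \leq c(\refC,\alpha) E(M,\bC,1)$ and the two weighted $L^2$-estimates \eqref{eqn:est-Z-M}, \eqref{eqn:est-Z-u}. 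I define the chunky function $\kappa$ by $\kappa(r,y) := \zeta_{\nu\mu}$ on each cylinder $[r_{\nu+1}, r_\nu) \times Q_{\nu\mu}$, extended as the value at the coarsest admissible scale whenever $r < \tau$. The sup bound $|\kappa|^2 \leq c(\refC,\alpha) E(M,\bC,1)$ is then immediate from the $|\zeta|$-bound in Proposition \ref{prop:Z-est}.

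For the $d_{\bC}$-estimate \eqref{eqn:spine-d-est}, I decompose $M \cap B_{1/4}$ into disjoint chunks $C_{\nu\mu} := M \cap \{y \in Q_{\nu\mu},\ r \in [r_{\nu+1}, r_\nu)\}$, on which $\max(r,\tau) \sim r_\nu \sim |X - Z_{\nu\mu}|$. Since $\bC = \bC_0 \times \R^m$ is translation-invariant along $\axis$, one has $d_{\bC}(X)^2 \leq 2\, d_{Z_{\nu\mu}+\bC}(X)^2 + 2|\zeta_{\nu\mu}|^2$. Restricting \eqref{eqn:est-Z-M} to $C_{\nu\mu}$ bounds the $d_{Z+\bC}^2$-contribution to $\int_{C_{\nu\mu}} d_{\bC}^2 / \max(r,\tau)^{2-\alpha}$ by $c\,r_\nu^n E$; summing over the $\sim r_\nu^{-m}$ cubes at scale $\nu$ yields $c\,r_\nu^{n-m} E = c\,r_\nu^2 E$ per scale, and $\sum_{\nu:r_\nu \geq \tau} r_\nu^2$ is uniformly bounded. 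The residual $|\zeta_{\nu\mu}|^2$-term sums similarly using $\haus^n(C_{\nu\mu}) \leq c\,r_\nu^n$ from the monotonicity inequality, giving a convergent $\sum_\nu r_\nu^\alpha$.

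The $u$-estimate \eqref{eqn:spine-u-est} is the main obstacle. A naive restriction of \eqref{eqn:est-Z-u} to each chunk only contributes $c\,r_\nu^m E$ per chunk and hence $cE$ per scale, which sums to $cE \log(1/\tau)$ and so fails to be $\tau$-independent. To recover the missing factor $r_\nu^2$ I plan to exploit the pointwise radial-decay bound \eqref{eqn:outline-hardt-simon} applied at $Z_{\nu\mu}$: integrating $|\partial_R(u(i)/R_Z)|^2$ radially from a reference sphere of radius $\sim r_\nu$ around $Z_{\nu\mu}$ (on which $u(i)/R_Z$ averages the boundary value encoded by $\zeta_{\nu\mu}^\perp$) yields a Poincaré-type bound $|u(i)(X) - \zeta_{\nu\mu}^\perp|^2 \lesssim r_\nu^2 \cdot (\text{radial tail of } \int R_Z^{2-n}|\partial_R(u(i)/R_Z)|^2)$ on each chunk. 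This $r_\nu^2$-gain restores geometric summability in $\nu$ and closes the estimate. The delicate step is matching reference-sphere values across adjacent scales so that the Poincaré constant is truly independent of $\tau$; I expect to handle this by iterating Proposition \ref{prop:Z-est} between parent and child cubes to obtain a comparison $|\zeta_{\nu\mu} - \zeta_{(\nu-1)\mu'}|^2 \leq c\,r_\nu^\alpha E$, which telescopes to control the boundary values uniformly.
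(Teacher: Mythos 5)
Your definition of $\kappa$, the $|\kappa|$-bound, and the argument for \eqref{eqn:spine-d-est} are all essentially correct and aligned in spirit with the paper (the paper organizes the final weighted estimate via integration in an auxiliary radius $\rho$ rather than your direct chunk-by-chunk summation, but the two are equivalent). The issue is with \eqref{eqn:spine-u-est}, where you have misdiagnosed the difficulty and, as a result, proposed a fix that is both unnecessary and not clearly correct.

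Your arithmetic is right that if one insists on the \emph{same} $\alpha$ on both sides, restricting \eqref{eqn:est-Z-u} to a chunk and dividing by $r_\nu^{4-\alpha}$ contributes roughly $cE$ per dyadic scale and therefore a harmless-looking but $\tau$-dependent $cE\log(1/\tau)$ after summation. But you do not need to ``recover a factor $r_\nu^2$.'' The exponent $\alpha$ is a free parameter over all of $(0,1)$, and it suffices to prove the statement with $2\alpha$ in place of $\alpha$ (then replace $\alpha$ by $\alpha/2$ at the end). Concretely: apply Proposition \ref{prop:Z-est} to get $\int_{\text{chunk}}|u(i)-\zeta_{\nu\mu}^\perp|^2 \leq c\,r_\nu^{n+2-\alpha}E$, divide by $\max(r,\tau)^{4-2\alpha}\sim r_\nu^{4-2\alpha}$ (the \emph{weaker} target weight) to get $c\,r_\nu^{m+\alpha}E$ per chunk, hence $c\,r_\nu^{\alpha}E$ per scale, and $\sum_\nu r_\nu^\alpha$ converges geometrically, independently of $\tau$. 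This is exactly what the paper does, phrased as ``multiply by $\rho^{-5+2\alpha}$ and integrate in $\rho\in[\tau,1/4]$, to obtain the estimate with $2\alpha$ in place of $\alpha$.''

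By contrast, your proposed Hardt--Simon-type Poincar\'e argument -- integrating $|\partial_R(u(i)/R_{Z})|^2$ radially out from spheres around $Z_{\nu\mu}$, gaining $r_\nu^2$ per chunk, and telescoping $|\zeta_{\nu\mu}-\zeta_{(\nu-1)\mu'}|$ across scales -- is attempting to prove a strictly sharper statement than the one being claimed, and you yourself flag that the cross-scale matching step is unresolved. Without a concrete handle on the reference-sphere averages and their comparison with $\zeta_{\nu\mu}^\perp$, that route is incomplete. The simpler fix above closes the gap.
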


\begin{proof}
Let $r_\nu$, $Q_{\nu\mu}$ be as in Definition \ref{def:chunky}.  Whenever $r_\nu > \tau/2$, by the no-holes condition there is $Z_{\nu\mu} = (\zeta_{\nu\mu}, \eta_{\nu\mu}) \in \sing M \cap (B_{\tau/10}^{2+k}(0)\times Q_{\nu\mu})$ with $\theta_M(Z_{\nu\mu}) \geq \theta_{\refC}(0)$.  By Proposition \ref{prop:Z-est}, we we have
\begin{align}
\int_{\Omega(i) \cap (B_{r_\nu}^{2+k}(0) \times Q_{\nu\mu}) \setminus B_\tau(L\times \R^m)} |u(i) - \zeta_{\nu\mu}^\perp|^2
&\leq \int_{ \Omega(i) \cap B_{c(n,k) r_\nu}(Z_{\nu\mu}) \setminus B_\tau(L\times \R^m)} |u(i) - \zeta_{\nu\mu}^\perp|^2 \\
&\leq c(\refC,\alpha) r_\nu^{n+2-\alpha} E(M, \bC, 1) .
\end{align}

Define $\kappa$ by
\begin{gather}
\kappa(r, y) = \zeta_{\nu \mu} \text{ for } (r, y) \in [r_{\nu+1}, r_\nu) \times Q_{\nu\mu}.
\end{gather}
Then since the number of cubes $\{Q_{\nu\mu}\}_\mu$ intersecting $B_1^m$ is bounded by $c(m) r_\nu^{-m}$, we have for any $r_\nu > \tau/2$:
\begin{align}
\int_{\Omega(i) \cap B_{1/4} \cap \{ r_{\nu + 1} \leq r < r_\nu \} \setminus B_\tau(L \times \R^m)} |u(i) - \kappa^\perp|^2 
&\leq \sum_{\mu : Q_{\nu\mu} \cap B^m_1 \neq \emptyset} \int_{\bC \cap (B_{r_\nu}^{2+k}(0) \times Q_{\nu\mu}) \setminus B_{\tau}(L\times \R^m)} |u - \zeta_{\nu\mu}^\perp|^2 \\
&\leq c(\refC,\alpha) r_\nu^{2+2-\alpha} E(M, \bC, 1).
\end{align}

Now given any $\rho > \tau$, choose $\nu$ so that $r_{\nu+1} \leq \rho < r_\nu$.  We have
\begin{align}
\int_{\Omega(i) \cap B_{1/4} \cap \{ \tau \leq r < \rho\} \setminus B_\tau(L \times \R^m)} |u(i) - \kappa^\perp|^2 
&\leq c r_\nu^{2+2-\alpha} \left( \sum_{j = 0}^\infty 2^{-j (\ell+2-\alpha)} \right) E(M, \bC, 1) \\
&\leq c \rho^{2+2-\alpha} E(M, \bC, 1).
\end{align}
Multiply by $\rho^{-2-3+2\alpha}$ and integrate in $\rho \in [\tau, 1/4]$, to obtain \eqref{eqn:spine-u-est} with $2\alpha$ in place of $\alpha$.

Let us prove \eqref{eqn:spine-d-est}.  Take $Q_{\nu\mu}$, $Z_{\nu\mu}$ as before.  Then for each $\nu,\mu$, we have by the same reasoning as above
\begin{align}
\int_{M \cap Q_{\mu\nu}} \frac{d_{\bC}^2}{r_\nu^{n-\alpha}} 
&\leq 2\int_{M \cap B_{c(n,k)r_\nu}(Z_{\mu\nu})} \frac{d_{Z_{\nu\mu}+\bC}^2}{|X - Z_{\nu\mu}|^{n+2-\alpha}} + 2|\zeta|^2 \int_{M \cap B_{c(n,k)r_\nu}(Z_{\nu\mu})} |X - Z_{\nu\mu}|^{-n+\alpha} \\
&\leq c(\bC_0, \alpha) E(M, \bC, 1).
\end{align}
In this last inequality we used Proposition \ref{prop:Z-est} centered at $Z$, and the mass bound $\mu_M(B_R) \leq c(\refC) R^n$.

Therefore, given any $\rho$, we can choose an appropriate $\nu$ and sum over $\mu$ as before to deduce
\begin{gather}
\int_{M \cap B_{1/4} \cap B_\rho(\{0\}\times \R^m)} d_{\bC}^2 \leq c \rho^{\ell-\alpha} E(M, \bC, 1).
\end{gather}
Now multiply by $\rho^{-\ell-1+2\alpha}$ and integrate in $\rho \in [\tau, 1/4]$, to obtain \eqref{eqn:spine-d-est} with $2\alpha$ in place of $\alpha$.
\end{proof}


\end{comment}

\section{Jacobi fields}\label{sec:jacobi}


The aim of this prove, under suitable assumptions, a superlinear decay on Jacobi fields whose linear part has been removed.  To state this we require some additional notation.

\begin{definition}
Let $\cL$ be the subspace of linear compatible fields $v : \refC \to {\refC}^\perp$ of the form
\begin{gather}
\cL = \left\{ v(x, y) = \pi_{{\refC}^\perp}(Ay) + v_0(x) : \begin{array}{c}\text{$A$ is a linear map $\axis \to \R^{2+k}\times \{0\}$,} \\ \text{and $v_0 : \refC_0 \to {\refC_0}^\perp$ is linear compatible} \end{array} \right\}.
\end{gather}
We will find these are precisely the $1$-homogeneous Jacobi fields arising from our blow-up procedure.

Now given an arbitrary compatible Jacobi field $v : \refC \to {\refC}^\perp$, and a scale $\rho > 0$, let us define $\psi_\rho \in \cL$ to be the element of $\cL$ minimizing
\begin{gather}
\min \left\{ \int_{\refC \cap B_\rho} v\cdot \psi \text{ among } \psi \in\cL \right\} ,
\end{gather}
and then define
\begin{gather}
v_\rho = v - \psi_\rho,
\end{gather}
so that $v_\rho$ is $L^2(\refC \cap B_\rho)$-orthogonal to every field in $\cL$.
\end{definition}

Our main Theorem of this section is the following.
\begin{theorem}[Linear decay]\label{thm:linear-decay}
Let $v : \refC \cap B_{1} \to {\refC}^\perp$ be a compatible Jacobi field, and fix $\theta \in (0, 1/4]$.  Suppose that for every $\rho \in [\theta, 1/4]$, there is a chunky function $\kappa_\rho : (0,\rho] \times B_\rho^m \to \R^{2+k}\times\{0\}$ so that we have the following two estimates:

A) Non-concentration estimate:
\begin{gather}\label{eqn:linear-decay-hyp1}
\rho^{2+2-\alpha} \int_{\refC \cap B_{\rho/4}} \frac{|v_\rho - \kappa_\rho^\perp|^2}{r^{2+2-\alpha}} \leq \beta \int_{\refC \cap B_\rho} |v_\rho|^2,
\end{gather}
with the pointwise bound $|\kappa_\rho| \leq \beta \rho^{-n} \int_{\refC \cap B_\rho} |v_\rho|^2$;

B) Hardt-Simon growth estimate:
\begin{gather}\label{eqn:linear-decay-hyp2}
\int_{\refC \cap B_{\rho/10}} R^{2-n} |\partial_R (v/R)|^2 \leq \beta \rho^{-n-2} \int_{\refC \cap B_\rho} |v_\rho|^2.
\end{gather}

Then there are constants $c_2$, $\mu$, depending only on $(\refC, \beta, \alpha)$, so that
\begin{gather}
\theta^{-n-2} \int_{\refC \cap B_\theta} |v_\theta|^2 \leq c_2 \theta^\mu \int_{\refC \cap B_1} |v_{1}|^2.
\end{gather}
\end{theorem}

Since the argument is somewhat involved, we provide a short outline.
\begin{proof}[Outline of Proof]
The biggest hurdle is to show that any $1$-homogeneous compatible Jacobi field $v$ satisfying \eqref{eqn:linear-decay-hyp1} must lie in $\mathcal{L}$.  This is proven in Theorem \ref{thm:1-homo-linear} as follows. 
		
First, we decompose $v(r\theta,y)=\sum_{i = 0}^\infty v_i(r, y)$, where each $v_i$ is the projection of $v$ onto the $i$-th eigenfunction of the Jacobi operator on the geodesic net $\Gamma=\bC_0\cap \sphere^{1+k}$. Thanks to the compatibility conditions, this operation is well defined on the net (see Section \ref{sec:eigenfunctions}) and each $v_i$ is smooth (Proposition \ref{lem:jacobi-apriori-est}). 
		
Next we observe that by $1$-homogeneity of $v$, we can write $v_i(r, y) = r \phi_i(y/r)$ and each $\phi_i$ satisfies the equation
\begin{gather}
\sum_{j,k=1}^m (\delta_{jk} + z^j z^k) D_j D_k \phi_i - \sum_{j=1}^m z^j D_j \phi_i + (1-\lambda_i)\phi_i = 0 .
\end{gather}
with $\lambda_i$ the eigenvalue associated to $\phi_i$. Moreover \eqref{eqn:linear-decay-hyp1}, becomes
\begin{align*}
&\int_{1}^\infty \int_{S^{m-1}} t^{-1-\alpha} |\phi_i(t\omega)|^2 d\omega dt < \infty \quad \text{when }\lambda_i \neq 0, \\
&\int_1^\infty \int_{S^{m-1}} t^{1-\alpha} | t^{-1} \phi_i(t\omega) - \tilde \kappa_i(t\omega)|^2 d\omega dt < \infty \quad \text{when }\lambda_i = 0 . 
\end{align*}

Now one exploits the fact that in polar coordinates the PDE of $\phi$ has a divergence structure, so that we can test it with a logarithmic cut-off function and use the above inequalities to estimate the RHS to prove in Lemma \ref{lem:pde-analysis} that:
\begin{enumerate} 
\item when $\lambda_i = 0$, then $\phi_i(z) = a\cdot z$ for some $a \in \R^m$ (corresponding to a rotation of the spine),
\item when $\lambda_i = 1$, then $\phi_i(z) \equiv const$ (corresponding to an action on $\bC$ that fixes the spine), 
\item otherwise, $\phi_i(z) = 0$ ($v$ cannot act on the spine in any other fashion).
\end{enumerate}
However we cannot do this directly, since a reverse Poincar\'e inequality might not be true for $\phi_i$, and indeed, we will need to study the equation for the radial part of each fourier mode of $\phi_i$ separately (see Lemma \ref{lem:ode-est}).

The underlying reason this works is because of the no-holes condition: in assuming the existence of a singular set (i.e. points of good density) arbitrarily near $\axis$, we enforce the infintesimal motion to act on $\axis$ by rotation.

At this point a simple contradiction argument allows us to prove that whenever $v_\rho$ ($ = $ component of $v$ orthogonal at scale $B_\rho$ to $\cL$) satisfies the non-concentration estimate \eqref{eqn:linear-decay-hyp1}, then the following quantitative growth estimate holds
\begin{gather}
\int_{\refC \cap B_\rho\setminus B_{\rho/10}} R^{2-n}|\partial_R(v/R)|^2 \equiv \int_{\bC \cap B_\rho \setminus B_{\rho/10}} R^{2-n}|\partial_R(v_\rho/R)|^2 \geq \frac{1}{c(\refC)} \rho^{-n-2} \int_{\refC \cap B_\rho} |v_\rho|^2.
\end{gather}
This can be combined with the Hardt-Simon inequalty \eqref{eqn:linear-decay-hyp2} to prove a decay of $\int_{\bC\cap B_\rho} R^{2-n} |\partial_R(v/R)|^2$, and hence a decay of $\rho^{-n-2} \int_{\bC \cap B_\rho} |v_\rho|^2$ also.
\end{proof}


\subsection{Elementary facts}

Let us prove some elementary properties of compatible Jacobi fields.  First, we demonstrate smoothness and a priori estimates up to and including the wedge boundaries.
\begin{lemma}\label{lem:jacobi-apriori-est}
Suppose $v : \refC \cap B_1 \to {\refC}^\perp$ is $C^{1,\alpha}$, satisfies the $C^0$- and $C^1$-compatibility conditions of Definition \ref{def:compatible}, and each $v(i)$ is harmonic on $\mathrm{int} W(i)\times \R^m$.

Then $v$ is a compatible Jacobi field in $B_1$ (so, is smooth up to and including the wedge boundaries), and for every non-negative integer $k$ and $\rho < 1$ we have the pointwise bound
\begin{gather}\label{eqn:jacobi-apriori-est}
\sup_{B_{\rho} \cap \left(W(i) \times \R^m\right)} |x|^{2k+n} |D^k v(i)|^2 \leq c(\refC, \rho, k) \int_{\refC \cap B_1} |v|^2.
\end{gather}
\end{lemma}

\begin{proof}
Away from $\partial W(i) \times \R^m$ smoothness follows from harmonicity.  Let assume assume $W(1)$, $W(2)$, $W(3)$ share a common boundary line $L$.  By the $C^1$ compatability condition we can perform an even extension of $v(1) + v(2) + v(3)$ across $L \times \R^m$ near $z$, and deduce that $v(1) + v(2) + v(3)$ is smooth up to $L \times \R^m$.

Let $P$ be the plane spanned by the conormals $n(1), n(2), n(3)$, and denote by $v(i)^T$ and $v(i)^\perp$ the orthogonal projections to $P$, $P^\perp$ respectively.  We can identify $P$ with $\R^2$, and the $n(i)$ with $1, e^{2\pi i/3}, e^{4\pi i/3}$.  By the $C^1$ compatability condition one can easily verify that
\begin{gather}
\partial_{n(i)} v(i)^T = \alpha e^{i\pi/2} n(i),
\end{gather}
for some $\alpha \in\R$.  In other words, up to a fixed scaling factor, each $\partial_{n(i)} v(i)^T$ is a $90^0$ rotation of $n(i)$.

We deduce that $\partial_{n(i)} v(i) \cdot n(i) = \partial_{n(j)} v(j)\cdot n(j)$ along $L$, and so using an even extension we deduce $v(i)^T - v(j)^T$ is smooth up to $L$.  Similarly, by the $C^0$ compatability condition, we have that $v(i)^\perp = v(j)^\perp$ along $L$, and so using an odd extension we deduce $v(i)^\perp - v(j)^\perp$ is smooth up to $L$.

Combining the above relations gives that each $v(i)$ extends smoothly to $L$.

We now prove \eqref{eqn:jacobi-apriori-est}.  Observe that near $L$, each $v(i)$ can be written as the sum of harmonic functions which extend smoothly across $L$ (by either an even or odd reflection).  Therefore, at \emph{any} point $(x, y) \in \refC\cap B_\rho$, we can scale up $|x| \to 1$ and use standard interior estimates to bound
\begin{gather}\label{eqn:jacobi-apriori-1}
\sup_{B_{\delta |x|}(x, y) \cap (W(i)\times \R^m)} |x|^{2k+n} |D^k v(i)|^2 \leq c(\refC, \delta, k) \int_{B_{2\delta |x|}(x, y) \cap (W(i)\times \R^m)} |v|^2 \leq c \int_{M \cap B_1} |v|^2.
\end{gather}
Here $\delta = \delta(\refC, \rho)$ is chosen to be
\begin{gather}
\delta = \min \{ 1/100 \cdot \text{ (smallest geodesic length in $\refC_0 \cap \bbS^{1+k}$)}, (1-\rho)/2 \}.
\end{gather}
The Lemma follows directly. 
\end{proof}

The following Proposition demonstrates that a compatible, $1$-homogeneous field on a polyhedral cone generates a rotation locally.  Unfortunately, it's not always clear if the local-rotations can patch together for form a global movement of the net.  Note this Proposition concerns the \emph{cross-section} $\refC_0$, not the full cone $\refC = \refC_0 \times \R^m$.
\begin{prop}\label{prop:baby-linear}
Let $v : \refC_0 \to {\refC_0}^\perp$ be a compatible $1$-homogeneous Jacobi field.  Then $v$ is linear: there are skew-symmetric matrices $A(i) : \R^{n+k} \to \R^{n+k}$ so that $v(i) = \pi_{P(i)^\perp} \circ A(i)$.

Morover, the $A(i)$ are locally compatible in the following sense: if $W(i_1), W(i_2), W(i_3)$ share a common boundary line $L$, then there is a skew-symmetric matrix $A_L$ so that
\begin{gather}
\pi_{P(i_j)^\perp} \circ A(i_j) = \pi_{P(i_j)^\perp} \circ A_L \quad \text{ for each } j = 1, 2, 3.
\end{gather}
\end{prop}

\begin{proof}
On each wedge $W(i)$, $v(i)$ is harmonic and $1$-homogeneous.  Since $W(i)$ is $2$-dimensional, it follows immediately that $v(i)$ is a linear map $W(i) \to W(i)^\perp$.  Since the domain and range of $v(i)$ are orthogonal, we can extend it to a skew-symmetric linear mapping on $\R^{n+k}$.

Let us prove local compatibility.  Fix a line $L \equiv L(1)$ of $\refC_0$, and suppose without loss of generality that the wedges $W(1), W(2), W(3)$ all meet at $L$.  For each such wedge, write $n(i)$ for the unit outward conormal of $L \subset W(i)$, and $\ell$ for the unit vector defining $L$.

On each piece $W(i)$, by assumption we can write the field $v(i)$ as
\begin{gather}
v(i)(x) = a(i) (x \cdot n(i)) + b(i) (x \cdot \ell),
\end{gather}
where $a(i), b(i) \in P(i)^\perp \subset \R^{2+k}$.  Here $\cdot$ denotes the standard Euclidean inner product.

By the $C^0$ compatibility condition, we have that
\begin{gather}
b(i) = \pi_{P(i)^\perp}(b),
\end{gather}
where $b$ is a fixed vector in $L^\perp \subset \R^{\ell+k}$.

From the $C^1$ compatability condition, we have $\sum_{i=1}^3 a(i) = 0$.  Therefore, by Lemma \ref{lem:sum-is-zero} we can choose an anti-symmetric $A'$ so that $A'(n(i)) = a(i)$.  Define the linear mapping
\begin{gather}
A(x) = A' x + (b - A'(\ell))(x \cdot \ell).
\end{gather}
Then $A$ is anti-symmetric, since $x^T A x = 0$ for every $x$, and by construction we have $v(i) = \pi_{P(i)^\perp} \circ A$.
\end{proof}

\subsection{Eigenfunctions on a net}\label{sec:eigenfunctions}

We require some additional notation.  Write $\Gamma = \refC_0 \cap \bbS^{1+k}$ to be the corresponding equiangular geodesic net of $\refC_0$ composed of geodesc segments $\cup_{i=1}^d \ell(i)$.  Here each $\ell(i) = W(i) \cap \bbS^{1+k}$.  We write a function $u : \Gamma \to {\refC_0}^\perp$ as a collection of functions $u(i) : \ell(i) \to W(i)^\perp$.

Define the norms
\begin{gather}
||u||_0^2 = \int_\Gamma |u|^2,\quad ||u||^2_1 = \int_\Gamma |u|^2 + |u'|^2, \quad ||u||^2_2 = \int_\Gamma |u|^2 + |u'|^2 + |u''|^2
\end{gather}
and let $L^2(\Gamma)$, $W^{1,2}(\Gamma)$, $W^{2,2}(\Gamma)$ be the completion of $C^\infty(\Gamma, {\refC_0}^\perp)$ with respect to these norms.  By Sobolev embedding, we have $W^{1,2} \subset C^0(\Gamma, {\refC_0}^\perp)$ and $W^{2,2} \subset C^1(\Gamma, {\refC_0}^\perp)$.

We say $u \in C^0(\Gamma)$ is \emph{$C^0$-compatible} if for every $p \in \partial \ell(i)$, there is a vector $V$ \emph{independent of $i$} so that $u(i)(p) = \pi_{\ell(i)^\perp}(V)$.  We say $u \in C^1(\Gamma)$ is \emph{$C^1$-compatible} if:
\begin{gather}
\partial_n u(i_1)(p) + \partial_n u(i_2)(p) + \partial_n u(i_3)(p) = 0
\end{gather}
whenever $\ell(i_1), \ell(i_2), \ell(i_3)$ share a common vertex $p$ ($n$ being the outward conormal).  Clearly, a Jacobi field $v : \refC_0 \to {\refC_0}^\perp$ is compatible if and only if each slice $v(r \equiv r_0)$ is compatible on the net $r_0 \Gamma$.

We aim to show the following:
\begin{theorem}\label{thm:eigenfunctions}
There is a sequence $0 = \lambda_1 \leq \lambda_2 \leq \ldots \to \infty$, and a collection $u_i \in C^\infty(\Gamma, {\refC_0}^\perp)$, so that
\begin{gather}
u_i'' + \lambda_i u_i = 0, \quad u_i \text{ is $C^0$- and $C^1$-compatible}, 
\end{gather}
and the $\{u_i\}_i$ form an orthonormal basis in $L^2(\Gamma)$.
\end{theorem}

\begin{remark}
If $v : \refC_0 \to {\refC_0}^\perp$ is a $1$-homogeneous compatible Jacobi field, then $v(r = 1)$ is an eigenfunction of $u \mapsto -u''$ with eigenvalue $1$.

If $V \in \R^{2+k}$ is a fixed vector, then $u(i)(x) = \pi_{W(i)^\perp}(V)$ is an eigenfunction of $-u''$ with eigenvalue $0$.  If $A : \R^{2+k} \to \R^{2+k}$ is a fixed linear map, then $u(i)(x) = \pi_{W(i)^\perp}(Ax)$ is an eigenfunction with eigenvalue $1$.
\end{remark}

Let us define the spaces
\begin{align}
H_1 &= \{ u \in W^{1,2}(\Gamma) \subset C^0(\Gamma, {\refC_0}^\perp) : \text{ $u$ is $C^0$-compatible} \}, \\
H_2 &= \{ u \in H_1 \cap W^{2,2}(\Gamma) \subset C^1(\Gamma, {\refC_0}^\perp) : \text{ $u$ is $C^1$-compatible } \}.
\end{align}
By Sobolev embedding and linearity of compatability conditions each $H_i$ is a well-defined closed (Hilbert) subspace of $W^{i,2}(\Gamma)$.  Our key Lemma is the following.

\begin{lemma}\label{lem:compact-mapping}
The mapping $H_2 \to L^2(\Gamma)$ sending $u \mapsto -u'' + u$ has a bounded inverse map $S : L^2(\Gamma) \to H_2$, which is self-adjoint as a map $L^2 \to L^2$.
\end{lemma}

\begin{proof}
The bilinear form $A : H_1 \times H_1 \to \R$ defined by
\begin{gather}
A(u, \phi) = \int_\Gamma u' \cdot \phi' + u\cdot \phi
\end{gather}
coincides with the inner product on $H_1$.  So by the Reisz representation theorem, there is a bounded solution operator $S : L^2(\Gamma) \to H_1$, which solves
\begin{gather}\label{eqn:basis-1}
A(S(f), \phi) = \int f \cdot \phi \quad \forall \phi \in H_1.
\end{gather}

We show that $S$ maps into $H_2$.  Let us fix some $u = S(f)$.  By standard arguments, and since $\Gamma$ is $1$-dimensional, we can take various $\phi$ supported in a fixed segment to deduce $u \in W^{2,2}$.  In particular, $u$ solves
\begin{gather}\label{eqn:basis-2}
-u'' + u = f \quad \haus^1-a.e. \text{ in } \Gamma.
\end{gather}
We just need to verify $u$ is $C^1$-compatible.

Fix a vertex $p$, and WLOG we can assume the segment $\ell(1), \ell(2), \ell(3)$ meet at $p$.  Choose $\phi$ to be supported in a neighborhood of $p$, then integrate by parts \eqref{eqn:basis-1} and use \eqref{eqn:basis-2} to obtain, for some fixed vector $V$, 
\begin{gather}
\sum_{i=1}^3 \phi(i)(p) \cdot \partial_n u(i)(p) = \sum_{i=1}^3 \pi_{<n(i)>^\perp}(V) \cdot \partial_n u(i)(p),
\end{gather}
where in the equality we used the $C^0$-compatibility of $\phi$.  Here we write explicitly $n(i)$ for the outward conormal of $\ell(i)$.  Since $V$ and $p$ are arbitrary, by Lemma \ref{lem:sum-is-zero} we deduce that $u$ is $C^1$-compatible.  This proves the claim.

Using that $u$ solves $-u'' + u = f$ at $\haus^1$-a.e. point, we can test against $u$ and $u''$, and use the $C^0$-/$C^1-$ compatibility conditions to integrate by parts, to obtain
\begin{gather}
||u||_2^2 \leq 2 \int_\Gamma |u|^2 + 2 \int_\Gamma |f|^2 + 2\int_\Gamma |f|^2 \leq 10 ||f||_0^2.
\end{gather}
So $S : L^2 \to H_2$ is bounded.

Let us demonstrate self-adjointness.  Take $v = S(g)$, and then by Lemma \ref{lem:sum-is-zero} the $C^0$- and $C^1$-compatability conditions ensure we can integrate by parts without picking up any boundary terms:
\begin{align}
\int_\Gamma f \cdot S(g) = \int_\Gamma (-u'' + u) \cdot v = \int_\Gamma u' \cdot v' + u\cdot v = \int_\Gamma u \cdot (-v'' + v) = \int_\Gamma S(f) \cdot g.
\end{align}
This completes the proof of Lemma \ref{lem:compact-mapping}.
\end{proof}

\begin{proof}[Proof of Theorem \ref{thm:eigenfunctions}]
From Lemma \ref{lem:compact-mapping}, the solution operator $S : L^2(\Gamma) \to L^2(\Gamma)$ is compact and self-adjoint, and therefore has a countable eigenbasis $u_i$ with eigenvalues $\mu_i \to 0$.  For each $u_i$, we have $u_i \in H_2$, and
\begin{gather}
u_i'' + (\mu_i^{-1} - 1) u_i = 0
\end{gather}
weakly in $H_1$, and strongly in $H_2$.  It's now straightforward to check each $u_i$ is smooth, and non-negativity of the $\lambda_i$ follows from integration by parts.
\end{proof}

\subsection{$1$-homogeneous implies linear}

For a polyhedral cone without any spine, we easily have that any $1$-homogeneous Jacobi field is linear in the sense of Definition \ref{def:compatible}.  However this argument fails in the presence of a spine.  Following Simon \cite{simon1}, we show that any compatible Jacobi field on $\refC = \refC_0\times \R^m$ with appropriate decay splits into rotation of the spine plus some linear component on $\refC_0$.

\begin{theorem}\label{thm:1-homo-linear}
Let $v : \refC \cap B_1 \to {\refC}^\perp$ be a $1$-homogeneous compatible Jacobi field, satisfying
\begin{gather}\label{eqn:1-homo-decay}
\int_{\refC \cap B_1} \frac{|v - \kappa^\perp|^2}{r^{2+2-\alpha}} < \infty,
\end{gather}
for some $\alpha \in (0, 1)$, and some bounded chunky function $\kappa : (0, 1] \times B_1^m \to \R^{2+k}\times \{0\}$.

Then there is a linear map $A : \{0\}\times \R^m \to \R^{2+k}\times \{0\}$, and a linear compatible Jacobi field $v_0 : \refC_0 \to {\refC_0}^\perp$, so that
\begin{gather}
v(x, y) = \pi_{{\refC}^\perp}(Ay) + v_0(x).
\end{gather}
In other words, $v \in \cL$.
\end{theorem}

\begin{proof}
Write $\Gamma = \refC_0 \cap \bbS^{1+k}$, and let $\Psi_i(\theta)$ be the eigenfunction expansion of $L^2(\Gamma)$ from Theorem \ref{thm:eigenfunctions}, with associated eigenvalues $\lambda_i$.  Write
\begin{gather}\label{eqn:1-homo-1}
v_i(r, y) = \int_\Gamma v(r\theta, y)\cdot \Psi_i(\theta) d\theta, \quad \kappa_i(r, y) = \int_\Gamma \kappa(r, y)^\perp \cdot \Psi_i(\theta) d\theta,
\end{gather}
so that $v(r\theta, y) = \sum_i v_i(r, y) \Psi_i(\theta)$ and $\kappa^\perp(r\theta, y) = \sum_i \kappa_i(r, y) \Psi_i(\theta)$.  Notice that, since
\begin{gather}
\kappa_i(r, y) \equiv \kappa(r, y) \cdot \int_\Gamma \Psi_i(\theta) d\theta,
\end{gather}
we have $\kappa_i \equiv 0$ unless $\lambda_i = 0$.

Since both $v$ and $\Psi_i$ are smooth and compatible, and $v$ is harmonic on each wedge, we can integrate \eqref{eqn:1-homo-1} by parts to deduce each $v_i(r, y)$ is smooth and solves:
\begin{gather}
\partial_r^2 v_i + \frac{1}{r} \partial_r v_i + \Delta_y v_i - \frac{\lambda_i}{r^2} v_i = 0.
\end{gather}

Let us define $\phi_i : \R^m \to \R$ by 
\begin{gather}
\phi_i(z) = v_i(1, z),
\end{gather}
so that by $1$-homogeneity we have $v_i(r, y) = r \phi_i(y/r)$.  By direct calculation, we see that $\phi_i$ satisfies the equation
\begin{gather}\label{eqn:spine-pde}
\sum_{j,k=1}^m (\delta_{jk} + z^j z^k) D_j D_k \phi_i - \sum_{j=1}^m z^j D_j \phi_i + (1-\lambda)\phi_i = 0 .
\end{gather}

We aim to show that any $\phi_i$ satisfying \eqref{eqn:spine-pde} and a decay condition guaranteed by \eqref{eqn:1-homo-decay}, must be either linear or constant, depending on the value of $\lambda_i$.  Let us first find the correct decay condition on each $\phi_i$.

Using the orthonormality of the $\Psi_i$, we can write
\begin{align}
\int_{\refC \cap B_1} \frac{|v - \kappa^\perp|^2}{r^{2+2-\alpha}}
&= \sum_i \int_{B_1^m} \int_0^{\sqrt{1-|y|^2}} r^{\alpha-3} |v_i(r, y) - \kappa_i(r, y)|^2 dr dy \label{eqn:1-homo-2} \\
&= \sum_i \int_{0 \ni s}^1 s^{m-1} \int_{0 \ni r}^{\sqrt{1-s^2}} \int_{\bbS^{m-1}} r^{\alpha-3} |v_i(r, s\omega) - \kappa_i(r, s\omega)|^2 d\omega dr ds \\
&= \sum_i \int_{0 \ni s}^1 s^{m-1} \int_{1/\sqrt{1-s^2} \ni t}^\infty  \int_{\bbS^{m-1}} t^{1-\alpha} |v_i(t^{-1}, s\omega) - \kappa_i(t^{-1}, s\omega)|^2 d\omega dt ds.
\end{align}
Therefore, by choosing an appropriate $s_0 \in (1/3, 1/2)$, we have
\begin{align}
&\int_{1}^\infty \int_{\bbS^{m-1}} t^{-1-\alpha} |\phi_i(t\omega)|^2 d\omega dt < \infty \quad \text{when }\lambda_i \neq 0, \\
&\int_1^\infty \int_{\bbS^{m-1}} t^{1-\alpha} | t^{-1} \phi_i(t\omega) - \tilde \kappa_i(t\omega)|^2 d\omega dt < \infty \quad \text{when }\lambda_i = 0 ,
\end{align}
where $\tilde \kappa_i(t\omega) := \kappa_i(t^{-1}, s_0\omega)$ is uniformly bounded.

We can now apply Lemma \ref{lem:pde-analysis} (proved just below) to deduce that $\phi_i(z) = a_i \cdot z$ when $\lambda_i = 0$, $\phi_i(z) \equiv b_i$ when $\lambda_i = 1$, and $\phi_i \equiv 0$ otherwise.  So we can write
\begin{align}
v(r\theta, y) 
&= \sum_{ \{ i : \lambda_i = 0 \} } r \phi_i(y/r) \Psi_i(\theta) + \sum_{ \{ i : \lambda_i = 1 \} } r \phi_i(y/r) \Psi_i(\theta) \\
&= \sum_{j=1}^{m} y^j  w_j(\theta) + r v_0(\theta),
\end{align}
where each $w_j(\theta)$ lies in the $\lambda = 0$ eigenspace of $L^2(\Gamma)$, and $v_0(r\theta) \equiv rv_0(\theta)$ is a $1$-homogeneous compatible Jacobi field on $\refC_0$.

By Proposition \ref{prop:baby-linear}, we know $v_0$ is linear.  We must show each $w_j(\theta)$ lies in the space
\begin{gather}
\cV = \{ \pi_{{\refC_0}^\perp}(v) : v \in \R^{2+k}\times\{0\} \}.
\end{gather}
Let $P$ be the $L^2(\Gamma)$ orthogonal projection to $\cV^\perp \subset L^2(\Gamma)$.

Since $\kappa^\perp \in \cV$ for each $(r, y)$, we have from \eqref{eqn:1-homo-decay} and $L^2(\Gamma)$-orthogonality of $w_j(\theta), v_0(\theta)$ that
\begin{align}
\int_0^1 \int_{B^m_{\sqrt{1-r^2}}} \int_\Gamma r^{\alpha-3} |\sum_{j=1}^{m} y^j w_j(\theta) - \kappa^\perp(r, y)|^2 d\theta dy dr
&\geq \int_0^1 \int_{B^m_{\sqrt{1-r^2}}} \int_\Gamma r^{\alpha-3} |P(\sum_{j=1}^{m} y^j w_j(\theta))|^2 d\theta dy dr
\end{align}
is finite, which necessitates that $P(\sum_{j=1}^{m} y^j w_j(\theta)) \equiv 0$ on $B^m_1 \times \Gamma$.  Hence, every $w_j \in \cV$ as required.
\end{proof}

To prove Lemma \ref{lem:pde-analysis} we shall need the following $W^{1,2}$ estimate.  We note that \eqref{eqn:pde-ineq} fails for general solutions of \eqref{eqn:spine-pde}, so in our analysis of Lemma \ref{lem:pde-analysis} we must consider each term of the Fourier expansion separately.

\begin{lemma}\label{lem:ode-est}
Suppose $\gamma : \R_+ \to \R$ satisfies the ODE
\begin{gather}\label{eqn:main-ode}
(1+r^2)\gamma'' + \left( (m-1)/r - r \right) \gamma' + (-\mu/r^2 + 1-\lambda)\gamma = 0,
\end{gather}
where $m \geq 1$.  Then for any $4 \leq \rho$ we have
\begin{gather}\label{eqn:ode-ineq}
\int_{\rho}^{2\rho} (\gamma')^2 dr \leq c(\mu, \lambda) \int_{\rho/2}^{4\rho} \gamma^2/r^2 dr.
\end{gather}

In particular, if $m \geq 2$, and $\phi = \gamma(r) \tilde\phi(\omega)$ solves \eqref{eqn:spine-pde} in $\R^m$, where $\tilde\phi(\omega)$ is an eigenfunction of $-\Delta_{\bbS^{m-1}}$ with eigenvalue $\mu$, then
\begin{gather}\label{eqn:pde-ineq}
\int_\rho^{2\rho} \int_{\bbS^{m-1}} |D\phi(r\omega)|^2 d\omega dr \leq c(\mu, \lambda) \int_{\rho/2}^{4\rho} \int_{\bbS^{m-1}} \phi(r\omega)^2/r^2 d\omega dr .
\end{gather}
\end{lemma}

\begin{proof}
The ODE \eqref{eqn:main-ode} can be written in the divergence form:
\begin{gather}\label{eqn:main-ode-div}
\partial_r (h(r) \partial_r \gamma(r)) + \frac{h(r)}{1+r^2} (r^{-2} \mu + 1 - \lambda)\gamma(r) = 0,
\end{gather}
where $h(r) = r^{m-1} (1+r^2)^{1-(2+m)/2}$.  Take $\eta(r)$ a cutoff which is $\equiv 0$ outside $[\rho/2, 4\rho]$, $\equiv 1$ on $[\rho, 2\rho]$, and linearly interpolates in between.  If we multiply \eqref{eqn:main-ode-div} by $\gamma \eta^2$, then we obtain
\begin{align}
\int (\gamma')^2 \eta^2 h dr \leq 5\int \frac{h}{1+r^2} \eta^2 \gamma^2 (|\mu| + 1 + |\lambda|)  + (\eta')^2 \gamma^2 h dr.
\end{align}
where we used that $r^{-2} |\mu| \leq |\mu|$ on $\spt \eta$.

Since $\rho \geq 4$, then we have
\begin{gather}
\frac{1}{2} r^{-1} \leq h(r) \leq 2 r^{-1},
\end{gather}
and therefore
\begin{align}
\rho^{-1} \int_\rho^{2\rho} (\gamma')^2
&\leq 50 (1 + |\mu| + |\lambda|) \rho \int_{\rho/2}^{4\rho} \gamma^2 ,
\end{align}
which proves the required relation \eqref{eqn:ode-ineq}.  

Let us now take $\phi(r\omega) = \gamma(r) \tilde\phi(\omega)$ solving \eqref{eqn:spine-pde}, with $\Delta_{\bbS^{m-1}} \tilde\phi + \mu \tilde\phi = 0$.  By direct computation, we see that $\gamma$ solves the ODE \eqref{eqn:main-ode}. Therefore, we can use \eqref{eqn:ode-ineq} to compute that
\begin{align}
\int_\rho^{2\rho} \int_{\bbS^{m-1}} |D\phi|^2  d\omega dr = \left( \int_{\bbS^{m-1}} \tilde\phi^2 d\omega \right) \int_\rho^{2\rho} (\gamma')^2 + \mu \gamma^2/r^2 dr \leq c(\mu, \lambda) \int_{\rho/2}^{2\rho} \int_{\bbS^{m-1}} \phi^2/r^2 d\omega dr
\end{align}

\end{proof}

\begin{lemma}\label{lem:pde-analysis}
Let $\phi: \R^m \to \R$ is a smooth function satisfying \eqref{eqn:spine-pde}, and take a fixed $\lambda \geq 0$.  Assume $\phi$ satisfies the decay bound
\begin{align}
&\int_1^\infty \int_{\bbS^{m-1}} r^{1-\alpha}  | r^{-1} \phi(r\omega) - k(r\omega)|^2 d\omega dr < \infty \quad \text{ if $\lambda = 0$}, \label{eqn:pde-bound-1} \\
&\int_1^\infty \int_{\bbS^{m-1}} r^{-1-\alpha} |\phi(r\omega)|^2 < \infty \quad \text{ if $\lambda > 0$}, \label{eqn:pde-bound-2}
\end{align}
where $k : \R^m \to \R$ is some bounded measurable function.

Then:

A) when $\lambda = 0$, then $\phi(z) = a\cdot z$ for some $a \in \R^m$, 

B) when $\lambda = 1$, then $\phi(z) \equiv const$, 

C) otherwise, $\phi(z) = 0$.
\end{lemma}

\begin{proof}
Consider the case $m \geq 2$, and let us first suppose $\phi$ takes the special form $\phi(r\omega) = \gamma(r) \psi(\omega)$, where $\psi$ is an eigenfunction of $-\Delta_{\bbS^{m-1}}$ with eigenvalue $\mu$.  Let $u = D_k \phi$ for any fixed $k$.  Then by direct computation $u$ solves
\begin{gather}\label{eqn:pde-analysis-1}
(\delta_{ij} + z^i z^j) D_i D_j u + z^i D_i u - \lambda u = 0.
\end{gather}
In polar coordinates \eqref{eqn:pde-analysis-1} becomes
\begin{gather}
(1+r^2) \partial_r^2 u + ((m-1)/r - (\ell-3)r ) \partial_r u + \Delta_{\bbS^{m-1}} u/r^2 - \lambda u = 0,
\end{gather}
which can be written in the divergence form
\begin{gather}\label{eqn:pde-analysis-2}
\partial_r (g(r) \partial_r u) + \frac{g(r)}{1+r^2} (\Delta_S u / r^2 - \lambda u) = 0.
\end{gather}
where $g(r) = r^{m-1} (1 + r^2)^{-(m-2)/2}$  (this should not be surprising, since the original Jacobi equation is in divergence form).

If we multiply \eqref{eqn:pde-analysis-2} by $\zeta(r)^2 u$, where $\zeta \in C^\infty_0(\R_+)$, then we otain
\begin{gather}
\int_0^\infty \int_{\bbS^{m-1}} \frac{g}{r^2(1+r^2)} |\nabla u|^2 \zeta^2 + g \zeta^2 (\partial_r u)^2 + \frac{g\zeta^2 u^2 \lambda}{1+r^2}  d\omega dr \leq 10 \int_0^\infty \int_{\bbS^{m-1}} g(r) (\zeta')^2 u^2 d\omega dr.
\end{gather}
Here $\nabla$ indicates the covariant derivative on $\bbS^{m-1}$.  Since $r^{-2} |\nabla u|^2 \leq |Du|^2$ is bounded as $r \to 0$, we can in fact plug in any $\zeta \in C^\infty_0([0, \infty))$.  In particular, let us take $\zeta$ to be the usual log cutoff
\begin{gather}\label{eqn:log-cutoff}
\zeta(r) = \max\left\{ 2 - \frac{\log (\max\{r, \rho\})}{\log \rho}, 0 \right\}, \quad \rho \geq 4.
\end{gather}

Since $g(r) \leq 2r$ on $\spt \zeta'$, we can use Lemma \ref{lem:ode-est} to obtain
\begin{align}
\int_0^\rho \int_{\bbS^{m-1}} g(r) \left( \frac{r^{-2}|\nabla u|^2 + \lambda u^2}{1+r^2} + (\partial_r u)^2 \right) d\omega dr
&\leq \frac{c}{(\log \rho)^2} \int_\rho^{\rho^2} \int_{\bbS^{m-1}} r^{-1} |D\phi|^2 d\omega dr \\
&\leq \frac{c(\lambda, \mu)}{(\log \rho)^2} \int_{\rho/2}^{2\rho^2} \int_{\bbS^{m-1}} r^{-3} \phi^2 d\omega dr. \label{eqn:pde-analysis-3}
\end{align}

If $\lambda > 0$, then since $r^{-3} \leq r^{-1-\alpha}$ the integral in \eqref{eqn:pde-analysis-3} is bounded as $\rho \to \infty$.  This shows that $u = D_k\phi \equiv 0$ for any $k$, and hence $\phi$ is constant.  Using \eqref{eqn:spine-pde}, we see that the only constant solution when $\lambda \neq 1$ is $\phi \equiv 0$.

If $\lambda = 0$ then we can instead estimate \eqref{eqn:pde-analysis-3} as
\begin{gather}
\eqref{eqn:pde-analysis-3} \leq \frac{c}{(\log \rho)^2} \int_{\rho/2}^{2\rho^2} r^{-1} \int_{\bbS^{m-1}} |r^{-1} \phi - k|^2 d\omega dr + \frac{c}{(\log \rho)^2} \int_{\rho/2}^{2\rho^2} r^{-1}  dr \leq \frac{c}{\log \rho},
\end{gather}
for some constant $c$ independent of $\rho$.  Taking $\rho \to \infty$ gives that $\phi = a\cdot z + b$, but from \eqref{eqn:spine-pde} we see that necessarily $b = 0$.

Now for a general $\phi$, we can decompose $\phi = \sum_i \gamma_i(r) \phi_i(\omega)$ where each $\gamma_i(r) \phi_i(\omega)$ extends to a $C^\infty$ solution of \eqref{eqn:spine-pde} on $\R^m$, and continues to satisfy bounds \eqref{eqn:pde-bound-1}, \eqref{eqn:pde-bound-2}.  Therefore we can apply the previous logic to each $\gamma_i \phi_i$ to deduce the required result.

\vspace{5mm}
Now consider $m = 1$.  This is essentially the same, but easier.  We observe that $u = \phi'$ satisfies the ODE
\begin{gather}
(1 + z^2) u'' + z u' - \lambda u = 0,
\end{gather}
which can be written in divergence form as
\begin{gather}\label{eqn:pde-analysis-4}
(g(z) u')' - \frac{\lambda g(z)}{1 + z^2} u = 0,
\end{gather}
where $g(z) = (1+z^2)^{1/2}$.

Multiply \eqref{eqn:pde-analysis-4} by $u(z) \zeta^2(|z|)$, where $\zeta$ is the log cutoff \eqref{eqn:log-cutoff}, and observe that $\phi(|z|)$ solves \eqref{eqn:main-ode} on $\R \setminus \{0\}$.  Using Lemma \ref{lem:ode-est}, we obtain as before that
\begin{align}
\int_{-\rho}^\rho (u')^2 g + \frac{\lambda u^2 g}{1+z^2} dz
\leq \frac{10}{(\log \rho)^2} \int_{|z| \in [\rho, \rho^2]} |z|^{-1} (\phi')^2  dz 
\leq \frac{c(\lambda)}{(\log \rho)^2} \int_{ |z| \in [\rho/2, 2\rho^2]} |z|^{-3} \phi^2 dz,
\end{align}
and the proof proceeds as in the case $m \geq 2$.
\end{proof}

\subsection{Linear decay}

We first demonstrate the lower bound: if $v$ is orthogonal to linear fields, then at that scale $v$ must grow quantitatively more than $1$-homogeneously.

\begin{lemma}\label{lem:ortho-implies-growth}
Suppose $v : \refC \cap \overline{B_1} \to {\refC}^\perp$ is a smooth compatible Jacobi field, which is $L^2(\refC \cap B_1)$ orthogonal to every element in $\cL$, and satisfies the decay estimate
\begin{gather}\label{eqn:ortho-hyp}
\int_{\refC \cap B_{1/4}} \frac{|v - \kappa^\perp|^2}{r^{2+2-\alpha}} \leq \beta \int_{\refC \cap B_1} |v|^2,
\end{gather}
where $\kappa : (0, 1] \times B_1^m \to \R^{2+k}\times \{0\}$ is a chunky function with bound $|\kappa|^2 \leq \beta \int_{\refC \cap B_1} |v|^2$.

Then we have
\begin{gather}\label{eqn:ortho-growth}
\int_{\refC \cap B_1\setminus B_{1/10}} |\partial_R(v/R)|^2 \geq \frac{1}{c(\refC, \beta, \alpha)} \int_{\refC \cap B_1} |v|^2.
\end{gather}
\end{lemma}

\begin{proof}
Suppose, towards a contradiction, the Lemma fails: we have a sequence of smooth, compatible Jacobi field $v_i$ on $\refC \cap \overline{B_1}$, and associated chunky functions $\kappa_i$, which both satisfy the hypothesis of Lemma \ref{lem:ortho-implies-growth}, but each $v_i$ admits the bound
\begin{gather}\label{eqn:ortho-1}
\int_{\refC \cap B_1 \setminus B_{1/10}} R^{2-n} |\partial_R (v_i/R)|^2 \leq \eps_i \int_{\refC \cap B_1} |v_i|^2,
\end{gather}
with $\eps_i \to 0$.

Define the rescaled $\tilde v_i := ||v_i||_{L^2(\refC \cap B_1)}^{-1} v_i$.  Then $||\tilde v_i||_{L^2(\refC\cap B_1} = 1$ for all $i$, and using Lemma \ref{lem:jacobi-apriori-est} we can pass to a subsequence, and deduce the $\tilde v_i$ converge smoothly to on compact subsets of $\refC \cap B_1 \setminus \axis$ to some limit $\tilde v$.  We have strong convergence in $L^2(\refC \cap B_{1/4})$, since \eqref{eqn:ortho-hyp} implies
\begin{gather}\label{eqn:ortho-2}
\int_{\refC \cap B_{1/4} \setminus B_\delta(\axis)} |v|^2 \leq c(\refC, \beta) \delta^{2-\alpha} \quad \forall \delta > 0 .
\end{gather}
By compactness of chunky functions, we can assume $||v_i||_{L^2(\refC \cap B_1)}^{-1} \kappa_i \to \tilde\kappa$ uniformly on compact subsets of $\refC \cap B_1 \setminus \axis$.

The resulting $\tilde v$ is a compatible Jacobi field, which is $L^2(\refC \cap B_1)$-orthogonal to the linear fields, and satisfies the bound
\begin{gather}\label{eqn:ortho-3}
\int_{\refC \cap B_{1/4}} \frac{|\tilde v - \tilde\kappa^\perp|^2}{r^{2+2-\alpha}} < \infty,
\end{gather}
where $\tilde\kappa : (0, 1] \times B_1^m \to \R^{2+k}\times \{0\}$ is bounded and chunky.

Moreover, by our hypothesis \eqref{eqn:ortho-1}, $\tilde v$ extends to a $1$-homogeneous field on $\refC$.  By Theorem \ref{thm:1-homo-linear} and our bound \eqref{eqn:ortho-3} we deduce $\tilde v$ is linear, but this contradicts our orthogonality assumption unless $\tilde v \equiv 0$.

So $\tilde v_i \to 0$ uniformly on compact subsets of $B_1 \cap \refC \setminus \axis$.  But, by radial integration and \eqref{eqn:ortho-3}, one can show that
\begin{gather}
\int_{\refC \cap B_1 \setminus B_{1/10}} |\partial_R(\tilde v_i/R)|^2 \geq \frac{1}{c(n)} - c(\refC)(\eps^2 + \beta \delta^{2-\alpha}),
\end{gather}
whenever $\sup_{\refC \cap B_{1/10} \setminus B_{\delta}(\axis)} |\tilde v_i| \leq \eps$.  For $i >> 1$, this is a contradiction.
\end{proof}

\vspace{5mm}

We now prove Theorem \ref{thm:linear-decay}.
\begin{proof}[Proof of Theorem \ref{thm:linear-decay}]
From Lemma \ref{lem:ortho-implies-growth} and \eqref{eqn:linear-decay-hyp2} there is a constant $\beta_2 = \beta_2(\refC, \beta, \alpha)$ so that, for every $\rho \in [\theta, 1/10]$, 
\begin{gather}
\int_{\refC \cap B_{\rho/10}} R^{2-n} |\partial_R (v/R)|^2 \leq \beta\int_{\refC \cap B_\rho} |v_\rho|^2 \leq \beta \beta_2 \int_{\refC \cap B_\rho \setminus B_{\rho/10}} R^{2-n} |\partial_R(v/R)|^2.
\end{gather}
Therefore by hole-filling we obtain
\begin{gather}
\int_{\refC \cap B_{\rho/10}} R^{2-n} |\partial_R(v/R)|^2 \leq \frac{\beta\beta_2}{1+\beta\beta_2} \int_{\refC \cap B_\rho} R^{2-n} |\partial_R (v/R)|^2.
\end{gather}

Writing $\gamma = \frac{\beta\beta_2}{1+\beta\beta_2} < 1$, we can iterate the above inequality to obtain
\begin{gather}
\int_{\refC \cap B_{\theta}} R^{2-n} |\partial_R(v/R)|^2 \leq c(\gamma) \theta^\mu \int_{\refC \cap B_{1/40}} R^{2-n} |\partial_R(v/R)|^2,
\end{gather}
where $\mu = -\log(\gamma)/\log(10) > 0$.  Using Lemma \ref{lem:ortho-implies-growth} at scale $\theta$ and \eqref{eqn:linear-decay-hyp2} at scale $1/4$ completes the proof of Theorem \ref{thm:linear-decay}.
\end{proof}


\end{comment}


\section{Inhomgeoneous blow-ups}\label{sec:blow-up}

We finish proving the excess decay Theorem \ref{thm:main-decay}.  We shall demonstrate how blow-up sequences generate compatible Jacobi fields, and how integrability allows to remove the linear part of the limiting field at any fixed scale.  This allows us to apply the linear decay of Theorem \ref{thm:linear-decay} to prove non-linear excess decay.

As before we continue to work with a fixed $\refC = \refC_0^2 \times \R^m$, with ${\refC_0}^2 \subset \R^{2+k}$ polyhedral.

\subsection{Blowing-up}

We need a notion of convergence under varying domains.  Consider the sequence of domains
\begin{gather}
\Omega_i = \{ (x', x_{m+1}) \in B_1^m \times \R : 0 \leq x_{m+1} \leq 1 + f_i(x') \} \subset \R^{m+1},
\end{gather}
where $f_i : B_1^m \to \R$ is $C^{1,\alpha}$, and $|f_i|_{C^{1,\alpha}} \to 0$.

Suppose we have $u_i : \Omega_i \to \R$, with uniformly bounded $|u_i|_{C^{1,\alpha}(\Omega_i)} \leq \Lambda$.  Define the functions $\phi_i : B_1^m \times [0, 1] \to \Omega_i$ by setting
\begin{gather}
\phi_i(x', x_{m+1}) = (x', (1 + f_i(x')) x_{m+1} ).
\end{gather}
Then $\phi_i$ is a diffeomorphism for large $i$, and we can consider the functions $\hat u_i : B_1^m \times [0,1] \to R$ defined by $\hat u_i = u_i \circ \phi_i$.

Now by Arzela-Ascoli and convergence of $f_i$, we can find a $C^{1,\alpha}$ function $u : B_1^m \times [0,1] \to \R$, with $|u|_{C^{1,\alpha}} \leq \Lambda$, so that:
\begin{gather}\label{eqn:varying-domain-conv}
\hat u_i \to u \text{  in $C^{1,\alpha'}(B_1^m \times [0,1])$}, \quad \text{ and } \quad u_i \to u \text{ in $C^{1,\alpha'}_{loc}(B_1^m \times (0, 1))$},
\end{gather}
for any $\alpha' < \alpha$.

\vspace{5mm}

Let us now take $(M_i, \bC_i, \eps_i, \beta_i)$ a blow-up sequence w.r.t. $\refC = \refC_0 \times \R^m$.  By Lemma \ref{lem:poly-graph}, there are numbers $\tau_i \to 0$ so that (for $i >> 1$) we can decompose
\begin{gather}
M_i \cap B_{3/4} = \graph_{\bC_i}(u_i, f_i, \Omega_i), \quad B_{1/2} \setminus B_{\tau_i}(\axis) \subset \Omega_i, \quad |u_i|_{C^{1,\mu}} + |f_i|_{C^{1,\mu}} \leq \tau_i,
\end{gather}
as per Definition \ref{def:poly-graph}, where the $u_i$, $f_i$ satisfy estimates \eqref{eqn:point-est-poly-1}, \eqref{eqn:point-est-poly-2}, \eqref{eqn:integral-est-poly-1}, \eqref{eqn:integral-est-poly-2}.

Similarly, we can decompose
\begin{gather}
\bC_i = \graph_{\refC}(\phi_i, g_i, U_i), \quad B_{3/4} \subset U_i, \quad |\phi_i|_{C^{1,\mu}} + |g_i|_{C^{1,\mu}} \leq \tau_i,
\end{gather}
where we use the fact $\bC_i$ is also conical to extend $U_i$.  Here $\phi_i$, $g_i$ also satisfy estimates \eqref{eqn:point-est-poly-1}, \eqref{eqn:point-est-poly-2}, \eqref{eqn:integral-est-poly-1}, \eqref{eqn:integral-est-poly-2} of Lemma \ref{lem:poly-graph}.

Since each $\bC_i$ is also polyhedral, we have that both $\phi_i$ and $g_i$ are \emph{linear} functions on the domains $U_i$ in $\refC$.  In particular, we can extend $\phi_i$ to be defined on each plane $P(i)\times \R^m$ associated to the wedges, and note that we can say (trivially) that
\begin{gather}\label{eqn:linear-smooth}
|\phi_i|_{C^\infty} + |g_i|_{C^\infty} \to 0.
\end{gather}

Let us define $\tilde \Omega_i(j) \subset P(i)$ to be the domains where
\begin{gather}
\Omega_i(j) = \{x' + \phi_i(x') : x' \in \tilde\Omega_i(j) \}.
\end{gather}
Since every $f_i, \phi_i, g_i \to 0$ in $C^{1,\mu}_{loc}$, each domain $\tilde\Omega_i(j)$ is converging locally in $C^{1,\mu}(B_{1/2} \setminus \axis)$ to $W(i) \times\R^m$.

Now consider the rescaled graphs $v_i(j) : \tilde\Omega_i(j) \to {\bC_i}^\perp$ defined by
\begin{gather}
v_i(j)(x') = \beta_i^{-1} u_i(j)(x' + \phi_i(x')). 
\end{gather}

From Lemma \ref{lem:poly-graph} and the definition of blow-up sequence, the $v_i$ satisfy:
\begin{gather}\label{eqn:holder-bounds-v}
\limsup_i \sum_{j=1}^d \int_{\Omega_i(j)} |v_i|^2 < \infty, \quad \sup_{\tilde\Omega_i(j)} r^{n+2}(r^{-1}|v_i| + |Dv_i| + r^{\alpha}[Dv_i]_{\alpha,C} )^2 \leq c(\refC, \alpha).
\end{gather}
Therefore, using \eqref{eqn:linear-smooth}, after passing to a subsequence (which we will also denote by $i$) we can find a function $v : \refC \cap B_{1/2} \to {\refC}^\perp$ so that for each $j = 1, \ldots, d$, we have $C^{1,\mu'}$ convergence $v_i(j) \to v(j)$ locally in the sense of \eqref{eqn:varying-domain-conv}.  In particular, we have
\begin{gather}
v(j) \in C^{1,\mu}_{loc}( ( (W(j) \setminus \{0\}) \times \R^m) \cap B_{1/2}).
\end{gather}

We can then make the following
\begin{definition}
Let $(M_i, \bC_i, \eps_i, \beta_i)$ be the subsequence which gives convergence to $v$ as outlined above.  We then say that $v$ is the \emph{Jacobi field generated by $(M_i, \bC_i, \eps_i, \beta_i)$}.
\end{definition}

We shall demonstrate in the following Proposition that $v$ is a compatible Jacobi field on $\refC$ with good estimates.

\begin{prop}\label{prop:blow-up}
Let $(M_i^{2+m}, \bC_i, \eps_i, \beta_i)$ be a blow-up sequence w.r.t $\refC$, generating Jacobi field $v : \refC \cap B_{1/2} \to \refC^\perp$.  Then $v$ is compatible (in the sense of Definition \ref{def:compatible}), and moreover satisfies the following estimates: for every $\rho \leq 1/4$, we have

A) Strong $L^2$ convergence:
\begin{gather}
\int_{\refC \cap B_\rho} |v|^2 = \lim_i \beta^{-2}_i \int_{M_i \cap B_\rho} d_{\bC_i}^2 ;
\end{gather}

B) Non-concentration:
\begin{gather}
\rho^{2+2-1/2} \int_{\refC \cap B_{\rho/2}} \frac{|v - \kappa_\rho^\perp|^2}{r^{2+2-1/2}} \leq c(\refC) \rho^{-n-2} \int_{\refC \cap B_\rho} |v|^2,
\end{gather}
where $\kappa_\rho : (0,\rho]\times B^m_\rho \to \R^{\ell+k}\times \{0\}$ is a chunky function satisfying $|\kappa_\rho|^2 \leq c(\refC) \rho^{-n} \int_{\refC \cap B_\rho} |v|^2$;

C) Growth estimates:
\begin{gather}
\int_{\refC \cap B_{\rho/10}} R^{2-n} |\partial_R (v/R)|^2 \leq c(\refC) \int_{\refC \cap B_\rho} |v|^2.
\end{gather}
\end{prop}

\begin{remark}
Even though $v$ is smooth, convergence to $v$ may be only $C^{1,\alpha}$.
\end{remark}

\begin{proof}
We first show compatibility.  Let $\{e_p\}_{p=1}^n$ be an ON basis for the plane $P(j) \times \R^m$.  Using the first-variation formula and the definition of $\phi_i$, $u_i$, one obtains directly that
\begin{gather}
\int_{W(j)\times \R^m} \sum_{p=1}^n (D_p u_i(j))(x + \phi_i(j)(x)) \cdot D_p \zeta(x) = \int_{\spt \zeta} O(|Du_i|^2 + |Du_i||D\phi_i| + |H_M|),
\end{gather}
for any $\zeta \in C^\infty_c( ((\mathrm{int} W(j)\times \R^m)\cap B_{3/4}, \R^{n+k})$.  Therefore, using \eqref{eqn:holder-bounds-v}, and the definition of $v_i$ and blow-up sequence, we get that
\begin{gather}
\int_{W(j)\times \R^m} \sum_{p=1}^n D_p v \cdot D_p \zeta = 0
\end{gather}
for all such $\zeta$.  We deduce that $v(j)$ is harmonic on $(\mathrm{int}W(j)\times \R^m)\cap B_{1/2}$.

Write $L = \cup_{j} L(j)$ for the lines of $\refC_0$.  Pick any $X = (x, y) \in ( (L \setminus \{0\}) \times \R^m) \cap B_{1/2}$.  In view of Remarks \ref{rem:wedge-cont} and \ref{rem:looks-like-Y}, we can choose a fixed $\rho = \rho(X, \bC)$, a constant $c = c(m, k, \rho)$ and a sequence of roatations $q_i \to q \in SO(n+k)$, so that
\begin{gather}
(q_i(\rho^{-1}(M_i - X)), \bY\times \R^{m+1}, c \eps_i, \beta_i)
\end{gather}
is a blow-up sequence w.r.t $\bY\times \R^{m+1}$, generating Jacobi field
\begin{gather}
\tilde v(Y) := \rho^{-1} (q \circ v)( X + \rho q^{-1}(Y)).
\end{gather}
By Lemma \ref{lem:compatible-Y}, $\tilde v$ satisfies the required $C^0$ and $C^1$ compatability conditions in $B_{1/2}$, and therefore $v$ satisfies these conditions in $B_{\rho/2}(X)$.  Compatibility of $v$ now follows from Lemma \ref{lem:jacobi-apriori-est}.

We now prove properties A), B), C).  Fix $\rho \leq 1/4$, and recall $v_i$ as the approximating sequence which converges to $v$.  We first observe that
\begin{gather}
\sum_{j=1}^d \int_{\Omega_i(j) \cap B_\rho \setminus B_\delta(\axis)} |v_i(j)|^2 = O(\tau_i) + (1 + o(1)) \beta_i^{-2} \int_{M_i \cap B_\rho \setminus B_\delta(\axis))} d_{\bC_i}^2,
\end{gather}
since the Jacobian of $u_i(x' + \phi_i(x'))$ is $1 + o(1)$, and $|u_i(j)| = d_{\bC_i}$ away from $B_{10\tau_i}(L\times \R^m)$.

Therefore, by the $C^{1,\mu}$ convergence of $\tilde\Omega_i(j)$ and $v_i(j)$ (as per \eqref{eqn:varying-domain-conv}) we have
\begin{gather}
\int_{\refC \cap B_\rho \setminus B_\delta(\axis)} |v|^2 = \lim_{i \to \infty} \beta_i^{-2} \int_{M_i \cap B_\rho \setminus B_\delta(\axis))} d_{\bC_i}^2.
\end{gather}

On the other hand, by estimates \eqref{eqn:thm-spine-est} and \eqref{eqn:thm-k-est} we have for any $\delta \geq \tau$ and $i >> 1$:
\begin{gather}\label{eqn:blow-up-1}
\sum_j \int_{\Omega_i(j) \cap B_\delta(\axis)\setminus B_{\tau}(L\times \R^m)} |u_i(j)|^2 \leq c(\refC) \delta^{2-1/2} E(M_i, \bC, 0, 1).
\end{gather}
Write $\Gamma = \limsup_i \beta_i^{-2} E_{\eps_i}(M_i, \bC_i, 0, 1)$.  Passing to the limit in \eqref{eqn:blow-up-1}, and then taking $\tau \to 0$, we deduce
\begin{gather}\label{eqn:no-conc-v}
\int_{\refC \cap B_\rho \cap B_\delta(\axis)} |v|^2 \leq c(\refC, \Gamma) \delta^{2-1/2}.
\end{gather}
Similarly, we have by estimate \eqref{eqn:thm-spine-est} that (for $i >> 1$)
\begin{gather}\label{eqn:no-conc-d}
\beta_i^{-2} \int_{M_i \cap B_\rho \cap B_\delta(\axis)} d_{\bC_i}^2 \leq c(\refC, \Gamma) \delta^{2-1/2}.
\end{gather}
Since \eqref{eqn:no-conc-v}, \eqref{eqn:no-conc-d} are valid for any fixed $\delta$ (provided $i$ sufficiently large), we deduce the strong $L^2$ convergence of A).

Let us prove B).  Fix $\tau > 0$.  We can apply Theorem \ref{thm:l2-est} at scale $\rho$ to deduce that, for each $i >> 1$, we have a chunky functon $\kappa_{\rho,i} : (0,\rho] \times B_\rho^m \to \R^{\ell+k}\times\{0\}$, with the bound
\begin{gather}
|\kappa_{\rho,i}| \leq c(\refC) \rho^{-n} \int_{M \cap B_\rho} d_{\bC_i}^2 + c(\refC) \rho \beta_i^2 \Gamma \eps_i, 
\end{gather}
so that
\begin{gather}\label{eqn:conv-part-B}
\rho^{2+2-1/2} \sum_{j=1}^d \int_{\Omega_j(i) \cap B_{\rho/2} \setminus B_\tau(L \times \R^m)} \frac{|\beta_i^{-1} u_i(j) - \beta_i^{-1} \kappa_{\rho, i}^\perp|^2}{r^{2+2-1/2}} \leq c(\refC) \rho^{-n-2} \beta_i^{-2} \int_{M \cap B_\rho} d_{\bC_i}^2 + c(\refC) \rho \Gamma \eps_i.
\end{gather}

By compactness of chunky functions, we can find a subsequence $i'$ and a chunky function $\kappa_\rho$ so that $\beta_i^{-1} \kappa_{\rho,i} \to \kappa_\rho$ pointwise, and uniformly on $B_\rho \setminus B_\tau(L\times \R^m)$ (for any fixed $\tau > 0$).  Using A), we can therefore take the limit in $i'$ on each side of \eqref{eqn:conv-part-B}, to deduce
\begin{gather}
\rho^{2+2-1/2} \int_{\refC \cap B_{\rho/2} \setminus B_\tau(L \times \R^m)} \frac{|v - \kappa_\rho|^2}{r^{2+2-1/2}} \leq c(\refC) \rho^{-n-2}\int_{\refC \cap B_\rho} |v|^2.
\end{gather}
Taking $\tau \to 0$ gives B).

We show C).  From \eqref{eqn:thm-point-est}, we have for any $\tau > 0$ and $i >> 1$,
\begin{gather}
\sum_i \int_{\Omega_i(j) \cap B_{\rho/10} \setminus B_\tau(L\times \R^m)} R^{2-n} |\partial_R(u_i(j)/R)|^2 \leq c(\refC) \rho^{-n-2} \int_{M_i \cap B_\rho} d_{\bC_i}^2 + c(\refC) \rho \beta_i^2 \Gamma \eps_i.
\end{gather}
Therefore, using the $C^1$ convergence of $v_i(j)$ away from $\partial W(j) \times \R^m$, and part A) we have
\begin{gather}
\int_{\refC \cap B_{\rho/2} \setminus B_\tau(L\times \R^m)} R^{2-n} |\partial_R(v_i(j)/R)|^2 \leq c(\refC) \rho^{-n-2} \int_{\refC \cap B_\rho} |v|^2.
\end{gather}
Now take $\tau \to 0$ to deduce C).
\end{proof}

We demonstrate that Jacobi fields obtained through inhomogeneous blow-up limits are compatible.

\begin{lemma}\label{lem:compatible-Y}
Suppose $(M_i^{1+m}, \bY\times \R^m, \eps_i, \beta_i)$ is a blow-up sequence w.r.t $\bY\times \R^m$, generating Jacobi field $v : \refC \cap B_{1/2} \to (\refC)^\perp$.  Then for every $y \in B_{1/2}^m$, there is a vector $V \in \R^{n+k}$ so that
\begin{gather}
v(j)(0, y) = \pi_{Q(j)^\perp}(V) \quad j = 1, 2, 3, \quad \text{and} \quad \sum_{j=1}^3 \partial_n v(j)(0, y) = 0.
\end{gather}
\end{lemma}

\begin{proof}
Fix some $y \in B_{1/2}^m$, and let $V_i$ be the (unique) point in $\sing M_i \cap (\R^{\ell+k}\times \{y\}) \cap B_1$.  So, we have
\begin{gather}
u_i(j)(f_i(j) (0, y)) = \pi_{Q(j)^\perp}(V_i),
\end{gather}
and from the $120^\circ$ angle condition we have
\begin{gather}\label{eqn:V-bound}
|V_i| \leq \sum_{j=1}^3 |u_i(j)(f_i(j)(0, y))|.
\end{gather}

From the blow-up procedure we have $u_i(j)(f_i(j)(0, y)) \to v(j)(0, y)$, and from \eqref{eqn:V-bound} we can pass to a subsequence $i'$ so that $V_{i'} \to V$.  Then we have
\begin{gather}
v(j)(0, y) = \pi_{Q(j)^\perp}(V).
\end{gather}
This proves the $C^0$-compatability.

We prove the $C^1$ condition.  Our proof follows \cite{simon1}, but we additionally exploit the stationarity of $\bY\times \R^m$ (as a technical aside, we mention that \cite{simon1} only requires stationarity away from the axis, while we stipulate stationary through the axis; for unions of half-planes this restricts not only the allowable surfaces but also the notion of integrability).  Let $\zeta(r, y)$ be any function with $\partial_r \zeta \equiv 0$ near $\{0\}\times \R^m$, and $\spt\zeta \subset B_{1/10}(X)$ for some $X$.  For ease of notation write $E_i = E(M_i, \bY\times \R^{1+m}, 1)$.

After rotation we can fix one of the $H(j) \equiv \R_+ \times \{0\}^k \times \R^m$.  So, coordinates on $H(j)$ are $(x^1, y^1, \ldots, y^m)$, and coordinates on $H(j)^\perp \equiv \R^k$ are $(x^2, \ldots, x^{1+k})$.  Ensuring $i >> 1$, we can assume $M_i$ is graphical over $H(j) \cap (B_{3/4} \setminus B_{\tau/2}(\partial H(j))$, with graphing function $u_i(j)$.  Write
\begin{gather}
U(j) = H(j)\cap (B_{1/2} \setminus B_{\tau}(\partial H(j))). 
\end{gather}

Let us drop the $i$ and $j$ indices momentarily.  Write $h^{pq}$ for the inverse of $h_{pq} = \delta_{pq} + D_p u \cdot D_q u$, and $\sqrt{h}$ for the determinant of $h_{pq}$.  Then we have
\begin{align*}
\int_{u(U)} \nabla x^1 \cdot \nabla \zeta
&= \underbrace{\int_{U} \sqrt{h} h^{11} \partial_{x^1} ( \zeta(\sqrt{x^2 + |u|^2}, y) )}_{=:I_1} \\
&\quad  + \underbrace{\int_U \sum_{p=1}^m \sqrt{h} h^{1, 1+p} \partial_{y^p} (\zeta(\sqrt{x^2 + |u|^2}, y)}_{=:I_2}) .
\end{align*}

Since the cross terms $|h^{1,1+p}| \leq c|Du|^2$, we can bound the second term directly as
\begin{align*}
\left| I_2 \right|
&\leq \int_{U} \left| \sum_{p=1}^m \sqrt{h} h^{1, 1+p} ( (\partial_r \zeta) \frac{u \partial_{y^p} u}{\sqrt{x^2 + |u|^2}} + (\partial_{y^p} \zeta))\right| \\
&\leq c(n, \beta) (|\partial_r \zeta| + |D_y \zeta|) \int_{U} |Du|^2 (1 + |u| |Du|) \\
&\leq c(n,\tau, \beta, \zeta) E_i . 
\end{align*}

The first term we don't bound quite explicitly.  Recalling that $|u| \leq \beta |x|$, we have
\begin{align*}
\left| I_1 - \int_{U} (\partial_r \zeta)(x, y) \right| 
&= \left|\int_{U} \sqrt{h} h^{11} (\partial_r \zeta)(\sqrt{x^2 + |u|^2}, y) \frac{x}{\sqrt{x^2 + |u|^2}} - \int_{U} (\partial_r \zeta)(x, y) \right| \\
&\leq \int_{U} | \sqrt{h} h^{11} - 1| \left| (\partial_r \zeta)(\sqrt{x^2 + |u|^2}, y) \right|  \\
&\quad + \int_{U} \left| (\partial_r \zeta)(\sqrt{x^2 + |u|^2}, y) \frac{x}{\sqrt{x^2 + |u|^2}} - (\partial_r \zeta)(x, y) \right| \\
&\leq c(n, \beta) |\partial_r \zeta| \int_{U} |Du|^2 + c(n, \beta) |\partial_r^2 \zeta| \int_{U(i)} |u|^2 \\
&\leq c(n, \beta, \tau, \zeta) E_i . 
\end{align*}

For $j = 2, \ldots, 1+k$, we also have
\begin{align}
\left| \int_{u(U)} \nabla x^j \cdot \nabla \zeta - \int_U D u^j \cdot D \zeta \right| 
&\leq \int_U |h^{pq} - \delta^{pq}| D_p u^j D_q (\zeta(\sqrt{x^2 + |u|^2}, y)) \\
&\quad + \int_U |D u^j| |D (\zeta(\sqrt{x^2 + |u|^2}, y)) - D \zeta(x, y) | \\
&\leq c(n, \beta) (|\partial_r \zeta| + |D_y \zeta| + |D^2 \zeta|) \int_U |Du|^2 + |u|^2 \\
&\leq c(n, \beta, \tau, \zeta) E_i. 
\end{align}

Therefore, turning indices back on we have the coordinate-free expression
\begin{align}
\int_{u_i(j)(U(j))} \pi_{\R^{1+k}\times\{0\}}(\nabla \zeta) \label{eqn:compat-sum-errors}
&= \int_{u_i(j)(U(j))} \sum_{p=1}^{1+k} (e_p \cdot \nabla \zeta) e_p \\
&= -\int_{U(j)} (\partial_r \zeta)(r, y) n(j) + \sum_q D_q u_i(j) \cdot D_q \zeta + R_i(j), 
\end{align}
Here $q$ sums over the coordinates on $H(j)$ (so, $x^1, y^1,\dots, y^m$), $n(j)$ is the outwards conormal of $\partial H(j) \subset H(j)$, and 
\begin{gather}
R_i(j) \leq c(n, \beta, \tau, \zeta) E_i. 
\end{gather}

We can identify any $U(j)$ and $U(j')$ by a rotation, and thereby view the integrand \eqref{eqn:compat-sum-errors} as defined on a fixed $U(1) \equiv U$.  Since $\sum_{j=1}^3 n(j) = 0$, this gives
\begin{align}
\sum_{j=1}^3 \int_{u_i(j)(U(j))} \pi_{\R^{1+k}\times\{0\}} (\nabla \zeta) 
&= \int_U  \sum_q D_q (\sum_{j=1}^3 u_i(i)) \cdot D_q \zeta + \sum_{j=1}^3 R_i(j),
\end{align}
where again $q$ sums over coordinates in $H(j)$.

On the other hand, provided $i$ is sufficiently large we can always ensure $\tau(\zeta)$ is sufficiently small so that $\partial_r \equiv 0$ on $V =  B_{5\tau}(\{0\} \times \R^m)$.  In particular, we have $\pi_{\R^{1+k}\times\{0\}}(D\zeta) = 0$ on $V$.  Therefore, we use the $L^2$-estimates of \cite[Theorem 3.1]{simon1} to deduce
\begin{align}
\left| \int_{M_i \cap V} e_i \cdot \nabla \zeta \right| 
&\leq \int_{M_i \cap V} | \pi_{\R^{1+k}\times\{0\}}(e_i) \cdot \pi_{M^T}(D\zeta)| \\
&= \int_{M_i \cap V} | \pi_{\R^{1+k}\times \{0\}}(e_i) \cdot (-\pi_{M^\perp}(D\zeta))| \\
&\leq \int_{M_i \cap V} | \pi_{M^\perp}(\pi_{\{0\}\times \R^m}(D\zeta))| \\
&\leq c(n, \zeta) \sqrt{t} \left( \int_{M_i \cap V \cap \spt \zeta} <M^\perp, \{0\}\times \R^m>^2 \right)^{1/2} \\
&\leq c(n, \zeta) \sqrt{t E_i} . 
\end{align}

Since we can ensure $|u_i(j)| \leq \beta |x| \leq |x|/100$ on $U(j)$, what this amounts to is that, for $e_p$ an ON basis of $\R^{1+k}\times \{0\}$, 
\begin{align}
R_1 = \sum_{p=1}^{1+k} \int_{M_i} \mathrm{div}(\zeta e_p) e_p 
&= \int_{M_i} \pi_{\R^{1+k}\times \{0\}}(\nabla \zeta) \\
&= \sum_{j=1}^3 \int_{u_i(j)(U(j))} \pi_{\R^{1+k}\times\{0\}}(\nabla \zeta) + S \\
&= \int_U \sum_q (\sum_{j=1}^3 D_q u_i(j))D_q \zeta + R_2 + S, \label{eqn:compat-2}
\end{align}
where $|R_1| + |R_2| \leq c(n, \zeta)  E_i$ and $|S| \leq c(n, \zeta) \sqrt{tE_i}$.

Multiply \eqref{eqn:compat-2} by $\beta_i^{-1}$, and by hypothesis $\beta_i^{-1} u_i(j) \to v(j)$ in $C^1$ on $U$, where $v$ is the generated Jacobi field.  Therefore, we obtain
\begin{gather}
0 = \int_U \sum_q \sum_{j=1}^3 D_q v(j) D_q \zeta + S
\end{gather}
for all $U$, and $|S| \leq c(n,\zeta)\sqrt{t}$.  Now take $t \to 0$, to deduce
\begin{gather}\label{eqn:compat-3}
0 = \int_{H} \sum_q \sum_{j=1}^3 D_q v(j) D_q \zeta,
\end{gather}
where we identify all the $H(j) \equiv H$ together via rotation, and $q$ sums over coordinates $(x^1, y^1, \ldots, y^m)$.

Let us write $\tilde v$ for the even extension of $\sum_{j=1}^3 v(j)$ to $Q \equiv Q(1) \equiv \R \times \{0\}^k \times \R^m$.  The above condition \eqref{eqn:compat-3} implies that
\begin{gather}\label{eqn:compat-4}
\int_Q \tilde v \Delta \zeta = 0
\end{gather}
for every $\zeta(r, y)$ with $\zeta(r, y) = \zeta(-r, y)$, and supported in $B_{1/10}(X)$ for some $X$.  But \eqref{eqn:compat-4} trivially holds for $\zeta$ which are odd in $r$, and therefore $\tilde v$ is weakly harmonic.  So in fact $\tilde v$ is smooth, and we deduce $\partial_r \tilde v = 0$.
\end{proof}

\subsection{Killing the linear part} We demonstrate that when $\refC_0$ is integrable (as per Definition \ref{def:integrable}), we can adjust the blow-up sequence to obtain a field that has no linear component.  Recall the notation that if $v$ is a compatible Jacobi field on $\refC$, then $v_\rho := v - \psi_\rho$, where $\psi_\rho$ is the $L^2(\refC \cap B_\rho)$-projection to $\cL$.

\begin{prop}\label{prop:kill-linear}
Let $(M_i, \refC, \eps_i, \beta_i)$ be a blow-up sequence w.r.t $\refC$, generating Jacobi field $v : \refC \cap B_{1/2} \to {\refC}^\perp$.  Suppose $\refC_0$ is integrable, and fix $\theta \in (0, 1/4]$.  Write $\Gamma = \limsup_i \beta_i^{-2} E_{\eps_i}(M_i, \refC, 1)$.

Then there is a constant $\gamma(\theta, \refC, \Gamma)$ so that the following holds: given any $\rho \in [\theta, 1/4]$, we can find a sequence of rotations $q_i \in SO(n+k)$, satisfying $|q_i - Id| \leq \gamma \beta_i$, so that $(M_i, q_i(\bC), \eps_i + \gamma \beta_i, \beta_i)$ is a blow-up sequence w.r.t $\refC$, generating the Jacobi field $v_\rho$.  In particular, we have the estimates:

A) Strong $L^2$ convergence:
\begin{gather}
\int_{\refC \cap B_\rho} |v_\rho|^2 = \lim_i \beta^{-2}_i \int_{M_i \cap B_\rho} d_{q_i(\bC)}^2 ;
\end{gather}

B) Non-concentration:
\begin{gather}
\rho^{2+2-1/2} \int_{\refC \cap B_{\rho/2}} \frac{|v_\rho - \kappa_{\rho, \psi}^\perp|^2}{r^{2+2-1/2}} \leq c(\refC) \rho^{-n-2} \int_{\refC \cap B_\rho} |v_\rho|^2,
\end{gather}
where $\kappa_{\rho,\psi} : (0,\rho]\times B^m_\rho \to \R^{\ell+k}\times \{0\}$ is a chunky function satisfying $|\kappa_{\rho,\psi}|^2 \leq c(\refC) \rho^{-n} \int_{\refC \cap B_\rho} |v_\rho|^2$;

C) Growth estimates:
\begin{gather}\label{eqn:growth-no-linear}
\int_{\refC \cap B_{\rho/10}} R^{2-n} |\partial_R (v/R)|^2 \leq c(\refC) \int_{\refC \cap B_\rho} |v_\rho|^2.
\end{gather}
\end{prop}

\begin{remark}
Of course $\partial_R (\psi_\rho/R) \equiv 0$, so \eqref{eqn:growth-no-linear} holds for both $v$ and $v_\rho$.
\end{remark}

\begin{remark}\label{rem:blow-up-rot}
Due to our particular notion of integrability (by rotations), we can always assume our initial blow-up sequence has $\bC_i \equiv \refC$ fixed, and thereby reduce to the hypothesis of Proposition \ref{prop:kill-linear}.  Proposition \ref{prop:kill-linear} holds also for general blow-up sequences (and the ``actual'' notion of integrability), using the fact that integrability is essentially an open condition on cones, but we will not need this.  See \cite{simon1} pages 601-602.
\end{remark}

\begin{proof}
Fix a $\rho \in [\theta, 1/4]$.  Using Proposition \ref{prop:blow-up} part A) we have
\begin{gather}
\rho^{-n-2} \int_{\refC \cap B_\rho} |\psi_\rho|^2 \leq \Gamma^2 \theta^{-n-2} ,
\end{gather}
and therefore, since $\psi_\rho$ is linear, we obtain
\begin{gather}
\sup_{\refC \cap B_1} |\psi_\rho| \leq c(\refC) \Gamma^2 \theta^{-n-2}.
\end{gather}

By integrability of $\refC_0$, the definition of $\cL$, and Theorem \ref{thm:1-homo-linear}, there is a skew-symmetric matrix $A_\rho : \R^{n+k} \to \R^{n+k}$ so that $\psi_\rho = \pi_{{\refC}^\perp} \circ A_\rho$, and $|A_\rho| \leq c(\refC, \Gamma, \theta)$.  We can therefore find a sequence of rotations $q_i \in SO(n+k)$, with $|q_i - Id| \leq c(\refC, \Gamma, \theta) \beta_i$, so that if we write
\begin{gather}
q_i(\bC) = \graph_{\refC}(\phi_i, g_i, U_i),
\end{gather}
then each $\phi_i(j) : P(j)\times \R^m \to P(j)^\perp$ is a linear function satisfying
\begin{gather}
\phi_i(j) = \beta_i \psi_\rho(j) + o(\beta_j).
\end{gather}

Now we have, for $i >> 1$, 
\begin{gather}
\int_{M \cap B_1} d_{q_i(\bC)}^2 \leq \int_{M \cap B_1} d_{\bC}^2 + c(\bC, \Gamma, \theta) \beta_i^2,
\end{gather}
and since increasing $\eps_i$ does not change the property of being a blow-up sequence, we see that $(M_i, q_i(\bC), \eps_i + \gamma \beta_i, \beta_i)$ is also a blow-up sequence.

We demonstrate that this blows up to $v_\rho$ as required.  As in Section \ref{sec:blow-up}, let us write
\begin{gather}
M_i = \graph_{q_i(\bC)}(u_i, f_i, \Omega_i), \quad M_i = \graph_{\bC}(u^*_i, f^*_i, \Omega^*_i) ,
\end{gather}
and define domains $\tilde \Omega_i(j) \subset W(j)$ by the condition that
\begin{gather}
\Omega_i(j) = \{ x' + \phi_i(x') : x' \in \tilde\Omega_i(j) \}.
\end{gather}

Now by elementary geometry we have that for every $x' \in \tilde\Omega_i(j) \cap \Omega^*_i(j)$, we have
\begin{gather}
u_i(x' + \phi_i(x')) = u^*_i(x') - \phi_i(x) + O( (|u_i| + |Du_i|) |D\phi_i|) = u^*_i(x') - \beta_i \psi_\rho + o(\beta_i).
\end{gather}
Since both $\tilde\Omega_i(j)$ and $\Omega^*_i(j)$ converge to the wedge $W(j)$ as $i \to \infty$, and since $\beta_i^{-1} u^*_i \to v$ by assumption, the blow-up of $u_i$ as per Section \ref{sec:blow-up} will yield the field $v_\rho = v - \psi_\rho$.
\end{proof}

\subsection{Non-linear decay: Proof of Theorem \ref{thm:main-decay}} Propositions \ref{prop:blow-up} and \ref{prop:kill-linear} allow us to use the linear decay of Jacobi fields as in Section \ref{sec:jacobi} to prove non-linear decay of $M$.

\begin{proof}[Proof of Theorem \ref{thm:main-decay}]
Fix $\theta \in (0, 1/4]$.  We first take $c_0(\refC) \equiv c(\refC)$ and $\gamma(\refC, \theta) \equiv \gamma(\refC, \theta, \Gamma = 1)$ to be the constants from Proposition \ref{prop:kill-linear}.  Now take $\mu(\refC) \equiv \mu(\refC, \beta = c_0, \alpha = 1/2)$ the constant from Theorem \ref{thm:linear-decay}.  We proceed by contradiction:

Suppose we had a sequence $M_i \in \cN_{\eps_i}(\refC)$ satisfying $E_{\eps_i}(M_i, \bC, 0, 1) \leq \eps_i^2$ and the $\eps_i/10$-no-holes condition, with $\eps_i \to 0$, but admitting for some $c_i \to \infty$ the bound
\begin{gather}
E_{\eps_i}(M_i, q(\refC), 0, \theta) \geq c_i \theta^\mu E_{\eps_i}(M_i, \refC, 0, 1) 
\end{gather}
for every $q \in SO(n+k)$ satisfying $|q - Id| \leq \gamma E_{\eps_i}(M_i, \refC, 0, 1)^{1/2}$.

Let us set $\beta_i^2 = E_{\eps_i}(M_i, \bC, 0, 1)$, and thereby obtain a blow-up sequence $(M_i, \refC, \eps_i, \beta_i)$, generating some Jacobi field $v : \refC \cap B_{1/2} \to \refC^\perp$.  By Proposition \ref{prop:kill-linear} and integrability of $\refC_0$, $v$ satisfies the hypotheses of Theorem \ref{thm:linear-decay} at scale $B_{1/2}$, with $\beta = c_0(\refC)$, and $\alpha = 1/2$.  Therefore we have the decay estimate
\begin{gather}
\theta^{-n-2} \int_{\refC \cap B_\theta} |v_\theta|^2 \leq c(\refC)\theta^\mu \int_{\refC \cap B_{1/2}} |v_{1/2}|^2 \leq c(\refC) \theta^\mu ,
\end{gather}
and a sequence of $q_i \in SO(n+k)$, with $|q_i - Id| \leq \gamma \beta_i$, so using the strong $L^2$-convergence of Proposition \ref{prop:kill-linear} A), we have for $i >> 1$
\begin{align}
E_{\eps_i}(M_i, q_i(\refC), 0, \theta) 
&\equiv \theta^{-n-2} \int_{M_i \cap B_\theta} d_{q_i(\refC)}^2 + \eps_i^{-1} \theta ||H_{M_i}||_{L^\infty(B_\theta)} \\
&\leq \left( 2\theta^{-n-2} \int_{\refC \cap B_\theta} |v_\theta|^2 \right) E_{\eps_i}(M_i, \bC, 0, 1) + \eps_i^{-1} \theta^\mu ||H_{M_i}||_{L^\infty(B_1)} \\
&\leq 4c(\refC) \theta^\mu E_{\eps_i}(M_i, \refC, 0, 1) .
\end{align}
For large $i$ this is a contradiction.
\end{proof}


 \end{comment}

\section{Equiangular nets in $\bbS^2$}\label{sec:nets}

We demonstrate that certain polyhedral cones are integrable, in the sense of Definition \ref{def:integrable}.  First, we demonstrate that $\bY\times \R$ and $\bT$ (under certain circumstances) admit no hole conditions.

\subsection{No-holes for $\bY$ and $\bT$}

The $\bY^1 \times \R^m$ cone is very special, in that closeness to thise cone always guarantees the existence of good density points.  No extra assumptions on the class or structure of the varifold are necessary.

\begin{prop}\label{prop:no-holes-Y}
There is an $\eps(m, k, \delta)$ so that if $M^{1+m} \subset \R^{1+k+m}$ lies in $M \in \cN_{\eps}(\bY\times \R^m)$, then $M$ satisfies the $\delta$-no-holes condition in $B_{1/2}$ w.r.t. $\bY \times \R^m$.
\end{prop}

\begin{proof}
By Lemma \ref{lem:global-graph}, provided $\eps$ is sufficiently small $M \cap B_{3/4}\setminus B_{\delta}(\axis)$ is a $C^{1,\alpha}$-perturbation of $\bY\times \R^m$.  We claim that
\begin{gather}
\sing M \cap (\R^{1+k}\times \{y\}) \cap B_{1/2} \neq \emptyset \quad \forall y \in B_{1/2}^m .
\end{gather}
Otherwise, since $\sing M$ is relatively closed, by Sard's theorem, we could choose a $y^*$ arbitrarily near $y$ so that $M \cap (\R^{1+k}\times \{y^*\}) \cap B_{1/2}$ would consist of a smooth $1$-manifold having three boundary components, which is impossible.

Therefore, using Almgren's stratification we have for $\haus^m$-a.e. $y \in B_{1/2}^m$ a singular point $X_y \in \sing M \cap (B_\delta(0)^{1+k}\times \{y\})$ which is $m$-symmetric.  So there is a tangent cone at $X_y$ which is either a multiplicity $\geq 2$ plane, or a union of $\geq 3$ half-planes, either of which has density $\geq \theta_{\bY}(0)$.
\end{proof}

\vspace{5mm}

Unfortunately, the tetrahedral cones $\bT^2 \times \R^m$ do not admit so nice a property, without imposing further restrictions: we can find piecewise-smooth varifolds of bounded mean curvature which look very close to $\bT$ at scale $B_1$, but which only have singularities of type $\bY\times \R$.  To rule this out one can enforce a boundary/orientability structure.

\begin{lemma}\label{lem:T-density}
Let $\bC = \bC_0^2 \times \R^m \subset \R^{3+m}$, where $\bC_0$ is $2$-dimensional, stationary and singular.  If (up to rotation) $\bC_0$ is not a multiplicity $1$ plane or the $\bY \times \R$, then we have
\begin{gather}
\theta_{\bC}(0) \geq \theta_{\bT}(0).
\end{gather}
\end{lemma}

\begin{proof}
If $\bC_0$ is planar, then it must be with multiplicity $\geq 2 > \theta_{\bT}(0)$.  If $\bC_0$ has $1$-degree of symmetry, then since we are not regular nor are we the $\bY$, then $\bC_0$ must consist of $\geq 4$ half-planes meeting along an edge, which also has multplicity $\geq 2$.

Suppose $\bC_0$ has no symmetries.  Consider the geodesic net $\Gamma := \bC_0 \cap \partial B_1 \subset \bbS^2$.  If any geodesic has multiplicity $\geq 2$, or any junction has $\geq 4$ vertices, then $\theta_{\bC}(0) \geq 2$ and we are done.  Let us suppose therefore that $\Gamma$ consists only of multiplicity-$1$ geodesics, which meet at $120^\circ$.

These nets are classified, and listed in the following subsection.  One can readily verify that the net with least length, aside from the circle and $\bY$, is the tetrahedral net.
\end{proof}

\begin{lemma}\label{lem:T-cross-section-sing}
Let $M^2$ be a set in $\R^3$ which coincides with $\bT^2$ in $B_1 \setminus B_\delta$.  Suppose $\haus^2 \llcorner M$ is an integral varifold with an associated cycle structure in $B_1$.  Then there is a point $x \in M \cap B_1$, so that $M$ near $x$ is \emph{not} a $C^1$ perturbation of $\R^2$ or $\bY \times \R$.
\end{lemma}

\begin{proof}
By assumption $M$ divides the annulus $B_{1} \setminus B_\delta$ into four regions $A_1, A_2, A_3, A_4$.  Any two $A_i$, $A_j$ share a boundary wedge $W \subset \bT$.

Suppose, towards a contradiction, that around every point $M$ is locally a $C^1$ perturbation of $\R^2$ or $\bY\times \R$.  Then $M \cap B_1$ consists of a finite collection of $C^1$ embedded surfaces $M_i$ meeting at $120^\circ$ along $C^1$ embedded curves $\gamma_i$.  Since $M$ coincides with $\bT$ outside $B_\delta$, we see that up to renumbering the curves $\gamma_1$, $\gamma_2$ start and end at vertices of $\bT \cap S^2$, while curves $\gamma_3, \gamma_4, \ldots$ must be closed.  See figure \ref{fig:no-holes} for an idealized picture.

\begin{figure}\label{fig:no-holes}
\centering
\includegraphics{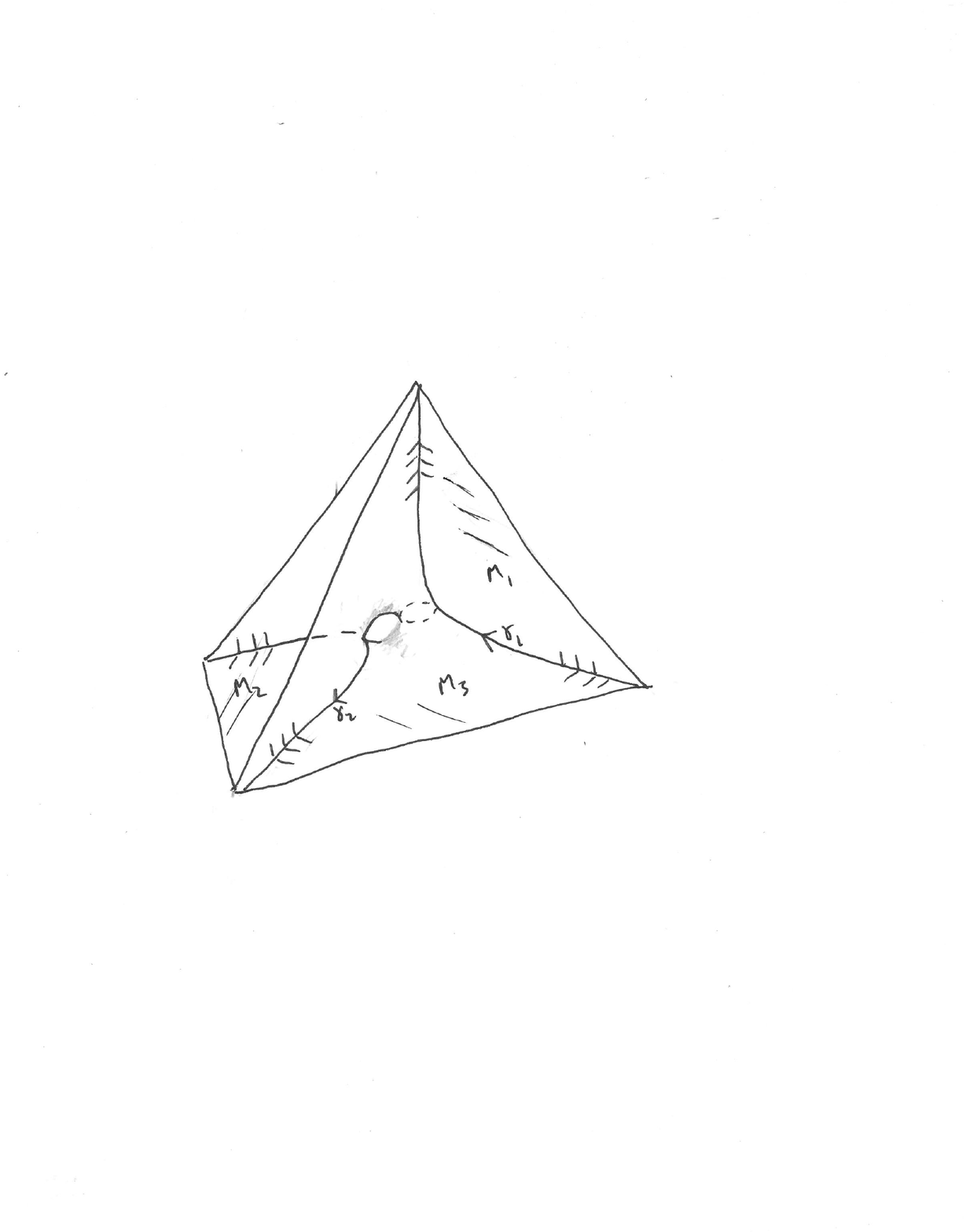}
\caption{A surfaces with only $\bY$-type singularities which coincides with $\bT$ outside a small ball.}
\end{figure}

We can assume $\gamma_1$ starts at the vertex adjoined by regions $A_1, A_2, A_3$, while $\gamma_1$ ends at the vertex adjoined by regions $A_1, A_2, A_4$.  A small tubular neighborhood of $\gamma_1$ is diffeomorphic $\bY\times \R$, and therefore if we push $\gamma_1$ away from any bounding surface in the conormal direction, the resulting curve $\hat \gamma_1$ induces a path connecting $A_i$ ($i = 1, 2, 3$) to some $A_j$ ($j = 1, 2, 4$).  After relabeling as necessary, we can thicken $\hat \gamma_1$ to obtain an open set $A$, disjoint from $M$, with $A \supset A_3 \cup A_4$.

Since each associated current is codimension $1$ and without boundary, we can assume WLOG that $\haus^2 \llcorner M$ is associated to a countable union of \emph{boundaries} $\partial [U_i]$, where $U_i$ are open sets, and we take the boundaries as $3$-currents.  From the above we have $A \subset U_i$ or $A \cap U_i = \emptyset$ for every $i$.  But now if $W$ is the boundary wedge shared by $A_3, A_4$, then the previous sentence implies
\begin{gather}
W \cap (B_{1/2} \setminus B_{2\delta}) \cap \spt \partial [U_i] = \emptyset \quad \forall i.
\end{gather}
And so $W$ \emph{cannot} be part of $M$.  This is a contradiction.
\end{proof}

\begin{remark}\label{rem:general-no-holes}
If one could show either curve $\gamma_1$ or $\gamma_2$ is unknotted (as in Figure \ref{fig:no-holes}), then one could construct a Lipschitz deformation of $M$ onto two faces of the solid tetrahedron (plus one edge).  This would prove Lemma \ref{lem:T-cross-section-sing} for general $(\mass, \eps, \delta)$-sets (at least for $\eps$ sufficiently small) without any extra orientation or codimension requirements.  Unfortunately, we have very little idea whether Lemma \ref{lem:T-density} holds in general codimension.
\end{remark} 

\begin{prop}\label{prop:no-holes}
There is an $\eps(m, \delta)$ so that the following holds.  Let $M^{n = 2+m} \subset \R^{3+m}$ be an integral varifold with associated cycle structure in $B_1$, and suppose $M \in \cN_{\eps}(\bT^2 \times \R^m)$.  Then $M$ satisfies the $\delta$-no-holes condition in $B_{1/2}$.
\end{prop}

\begin{proof}
By Lemma \ref{lem:poly-global-graph}, $M \cap B_{3/4} \setminus B_\delta(\axis)$ is a $C^{1,\alpha}$-perturbation of $\bT\times \R^m$, for $\eps(m, \delta)$ sufficiently small.  So
\begin{gather}\label{eqn:no-holes-T-1}
\sing M \cap B_{3/4} \subset B_\delta(\axis),
\end{gather}
and there is no loss in assuming $M \cap B_{3/4} \setminus B_\delta(\axis)$ coincides with $\bT \times \R^m$.

We claim that, for every $y \in B_{1/2}^m$, there is some singular point
\begin{gather}\label{eqn:no-holes-T-2}
X_y \in \sing M \cap (\R^{3}\times \{y\}) \cap B_{1/2}
\end{gather}
which is not a (multiplicity-1) $\bY\times \R^{1+m}$.  We prove this by contradiction.

First, observe that by Simon's regularity Theorem \ref{thm:Y-reg}, the set of singular points which are not a multiplicity-1 $\bY\times \R^{1+m}$ is relatively closed in $\sing M$, and hence closed.  Therefore, if the claim failed, it would fail for $y$ in some open set $U$.  Using Allard's and Simon's regularity we obtain that $M \cap (\R^{3}\times U)$ consists of embedded, multiplicity-one $C^1$ $n$-surfaces, meeting at $120^0$ along embedded $C^1$ $(n-1)$-surfaces.

Therefore by Sard's theorem, for a.e. $y \in U$, the $M \cap (\R^3 \times \{y\})$ consists of embedded $C^1$ surfaces meeting at $120^\circ$ along embedded $C^1$ curves, which coincides with $\bT^2$ in an annulus.  However, by slicing we also have that for a.e. $y \in U$, $\haus^2 \llcorner M \cap (\R^3 \times \{y\})$ has an associated cycle structure in $B_{1/4}(0, y)$, contradicting Lemma \ref{lem:T-cross-section-sing}.  This proves the claim.

The Proposition is completed by combining \eqref{eqn:no-holes-T-1} and the above claim with Lemma \ref{lem:T-density}.
\end{proof}

\subsection{Integrability}\label{sec:int}


We establish integrability of those polyhedral cones which arise from an equiangular geodesic net in $\bbS^2$.  As discussed in Remark \ref{rem:maybe-non-int}, it seems possible to us that in higher-codimension there exist non-integrable polyhedral cones (for either definition of integrability).  Indeed, even in the codimension-$1$ case we are unable to give a general abstract proof, but instead we make use of the classification of equiangular geodesics nets in $\bbS^2$ due to \cite{lamarle}, \cite{heppes} and proceed on a case-by-case basis.

\begin{theorem}
Suppose $\bC^2 \subset \R^3 \subset \R^{2+k}$ is a polyhedral cone.  Then $\bC$ is integrable in $\R^{2+k}$.  In particular the tetrahedron $\bT^2 \subset \R^{2+k}$ is integrable.
\end{theorem}


\begin{proof}
Fix a polyhedral cone $\bC^2 \subset \R^3 \subset \R^{2+k}$, composed of wedges $\cup_{i=1}^d W(i)$.  Write $\Gamma = \bC \cap \bbS^{1+k}$ for the corresponding equiangular geodesic net, and $\ell(i) \equiv W(i) \cap \bbS^{1+k}$ for the geodesic segments.  After relabeling as necessary we can assume $\ell(1), \ell(2), \ell(3)$ share a common vertex.

Let $v : \bC \to \bC^\perp$ be a linear, compatible Jacobi field.  We wish to show that $v = \pi_{\bC^\perp} \circ A$ for some skew-symmetric matrix $A : \R^{2+k} \to \R^{2+k}$.  From Proposition \ref{prop:baby-linear} we know this holds locally, in the sense that there is a skew-symmetric $A_0$, so that
\begin{gather}
v(i) = \pi_{\bC^\perp} \circ A_0 \quad i = 1, 2, 3.
\end{gather}
Therefore, by considering the field $v - \pi_{\bC^\perp}\circ A_0$, we can and shall reduce to the case when $v(1) = v(2) = v(3) = 0$.

In fact we shall prove that any linear, compatible Jacobi field $v$ satsifying $v(1) = v(2) = v(3) = 0$ must be identically zero.  It is reasonable to expect this to be true, as the $v(i)$s with their compatibility conditions effectively form a system of linear equations, and one can easily verify that the total number of variables equals the total number of conditions (equals $2kd$).  However an abstract counting argument seems insufficient to establish $v \equiv 0$, as the linear independence of this system depends strongly on both the global topology and geometry of the underlying net.  Thankfully, the possible nets $\Gamma$ are very well understood, and we can prove our assertion on a case-by-case basis.

\vspace{5mm}

Let us first assume $k = 1$.  For each $i$, fix a unit speed paramterization of $\ell(i)$, and write $\hat \ell(i)$ for the induced unit tangent vector.  We take $\hat\ell(i) \wedge \hat x$ to be the choice of unit normal to $W(i)$ (and hence an orientation on $W(i)^\perp$), where $\hat x$ is the unit position vector.

Define scalar functions $f(i) : \ell(i) \cong [0, \mathrm{length}(\ell(i))] \to \R$ by setting
\begin{gather}
f(i)(\theta) = v(i)(\theta) \cdot (\hat \ell(i) \wedge \hat x).
\end{gather}
Then each $f(i)$ completely determines $v(i)$, and takes the form
\begin{gather}\label{eqn:form-of-f}
f(i)(\theta) = a(i) \sin(\theta) + b(i) \cos(\theta), \quad \theta \in \ell(i) \cong [0, \mathrm{length}(\ell(i))],
\end{gather}
for real constants $a(i), b(i)$.

We shall prove that every $f(i)$ must be identically $0$.  Recall that by hypothesis we have
\begin{gather}\label{eqn:scalar-initial-conds}
f(1) = f(2) = f(3) = 0,
\end{gather}
while using Lemma \ref{lem:vect-vs-scalar-cond}, the $C^0$- and $C^1$-compatibility conditions on $v$ imply that
\begin{gather}\label{eqn:scalar-compat-conds}
\sum_{j=1}^3 (n(i_j) \cdot \hat \ell(i_j)) f(i_j)(p) = 0, \quad \text{ and } \quad f'(i_1)(p) = f'(i_2)(p) = f'(i_3)(p),
\end{gather}
whenever $\ell(i_1), \ell(i_2), \ell(i_3)$ share a common vertex $p$.  Here $n(i)$ is the outer conormal of $\ell(i)$, and $f'(i) \equiv \partial_{\hat \ell(i)} f(i)$ is the derivative in the direction $\hat\ell(i)$.
%

\vspace{5mm}

From the work of \cite{lamarle}, \cite{heppes}, and since $\bC \subset \R^3$ cannot have additional symmetries, then up to rotation $\Gamma$ can be only one of $8$ possible nets.  We prove integrability case-by-case by establishing that the corresponding system of $f(i)$s satisfying \eqref{eqn:scalar-initial-conds}, \eqref{eqn:scalar-compat-conds} must vanish.  In each case we give a topological diagram indicating numbering, orientation, and length (a single arrow indicates length $\theta_1$, a double arrow indicates $\theta_2$, etc.).  We will additionally use the following notation: if $p$ is the vertex joining edges $\ell(1), \ell(2), \ell(3)$ (e.g.), then we refer to $p$ by the triple $(1,2,3)$.

The possible nets (presented in the same order as in \cite{taylor}), with their corresponding proofs of integrability, are as follows.  Each edge length is given to $3$ decimal places.
\begin{enumerate}

\item \textbf{Regular tetrahedron}, having $6$ edges, each of length $\theta_1 = 109.471^\circ$.

We can apply the $C^1$ condition \eqref{eqn:scalar-compat-conds} at each end of $\ell(4)$ to obtain $f(4)'(0) = f(4)'(\theta_1) = 0$.  We deduce $f(4) \equiv 0$, and by symmetry we have $f(i) \equiv 0$ for all $i$.

\begin{figure}
\centering
\includegraphics{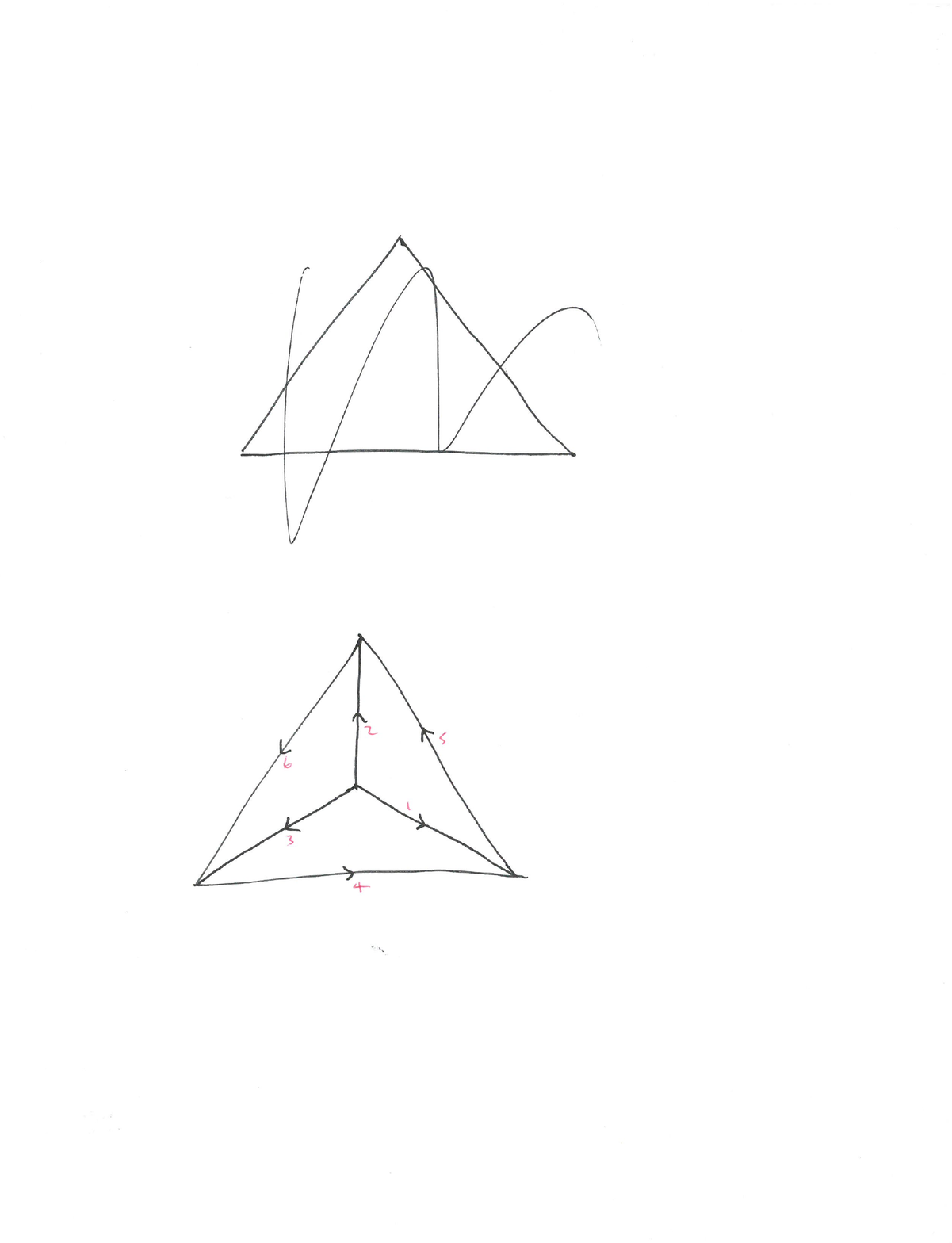}
\caption{Regular tetrahedron}
\end{figure}

\item \textbf{Regular cube}, having $12$ edges of length $\theta_1 = 70.529^\circ$.

Applying the $C^0$ and $C^1$ conditions \eqref{eqn:scalar-compat-conds}, and using that all edges have the same length, gives directly the relations
\begin{align}
&f(6) = -A \cos(\theta), \quad f(7) = A \cos(\theta), \quad f(9) = A \cos(\theta), \quad f(10) = -A \cos(\theta), \\
&\quad f(5) = -A\cos(\theta), \quad f(4) = A \cos(\theta),
\end{align}
where $A$ is the same constant.  But then, applying \eqref{eqn:scalar-compat-conds} at vertex $(4, 10, 12)$ gives the relation $A \cos(\theta_1) = -A \cos(\theta_1)$, which can only hold if $A = 0$.  By symmetry we deduce that every $f(i) \equiv 0$.

\begin{figure}
\centering
\includegraphics[width=3.5in]{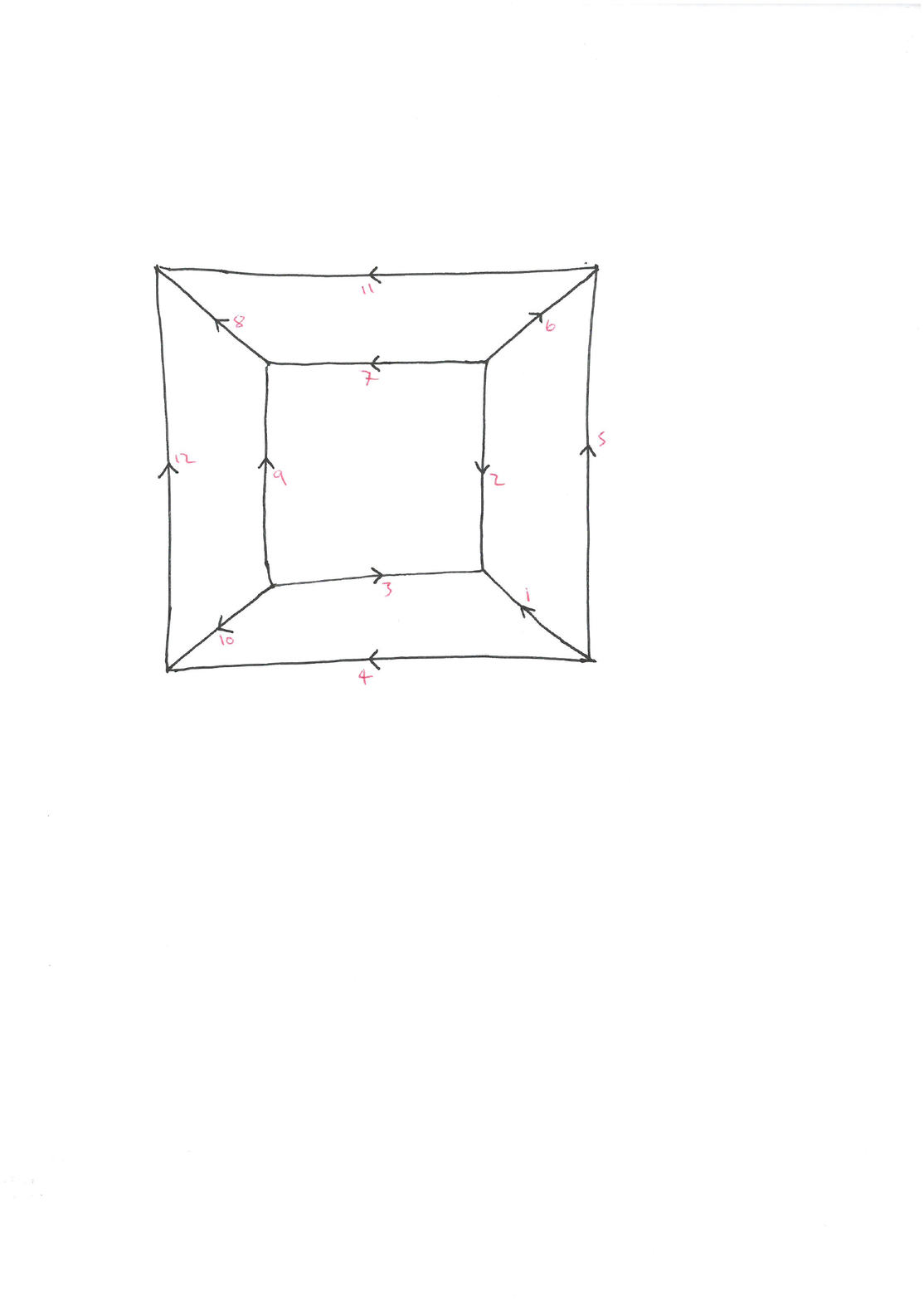}
\caption{Regular cube}
\end{figure}

\item \textbf{Prism over regular pentagon}, forming $15$ edges: ``with the pentagonal arcs having length $\theta_1 = 41.810^\circ$ and the other arcs being of length $\theta_2 = 105.245^\circ$.''

By the same reasoning as in the cube, taking into account the different lengths $\theta_1$, $\theta_2$, we have
\begin{align}
&f(6) = A \cos(\theta), \quad f(5) = -A \cos(\theta), \quad f(14) = A \cos(\theta), \quad f(11) = -A\cos(\theta),
\end{align}
for some constant $A$.  We can therefore apply the $C^1$ condition at each end of $\ell(9)$, to see that
\begin{gather}
f(9) = -A \sin(\theta_1) \sin(\theta) - A(\cos(\theta_1) + 1)\cos(\theta).
\end{gather}
Apply both conditions at vertex $7,5,4$ to obtain
\begin{gather}
f(7) = A \sin(\theta_2)\sin(\theta) - A(\cos(\theta_2) + \sin(\theta_2) \frac{\cos(\theta_1)}{\sin(\theta_1)} )\cos(\theta).
\end{gather}
These, together with $f(6)$, give three conditions on $f(8)$, and we obtain the relation
\begin{gather}
A(4 \sin \theta_2 \cos\theta_1 + 2\sin\theta_1\cos\theta_2 - \sin\theta_2) = 0.
\end{gather}
The term in the brackets is $-3.5$, to one decimal place.  We deduce that $A = 0$, and it's straightforward to verify that $f(i) \equiv 0$ for every $i$.

\begin{figure}
\centering
\includegraphics[width=4.5in]{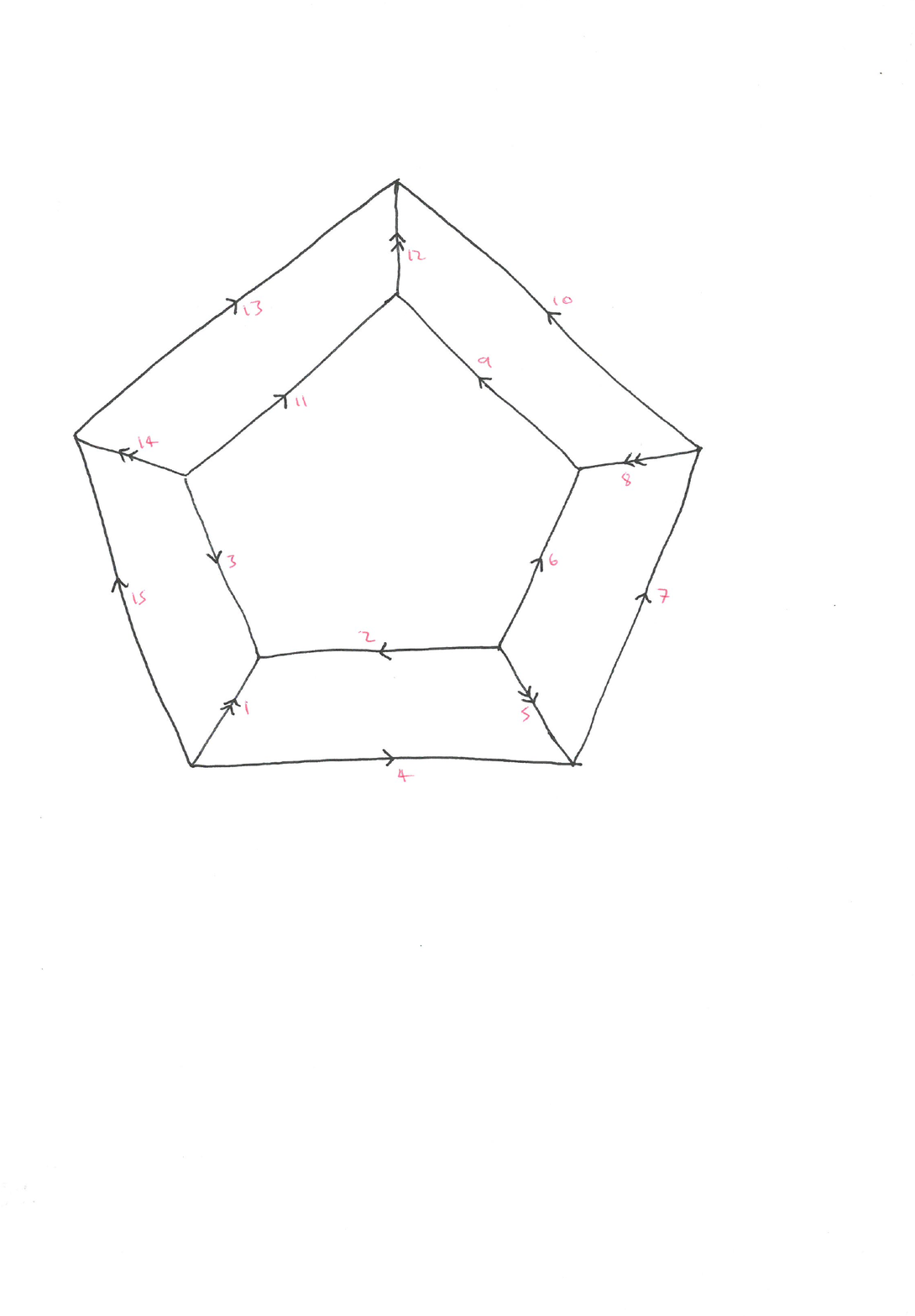}
\caption{Prism over regular pentagon}
\end{figure}

\item \textbf{Prism over a regular triangle}, forming $9$ edges: ``the triangular arcs being of length $109.471^\circ$ and the other arcs of length $38.942^\circ$.''

By same reasoning as the tetrahedron, we can apply the $C^1$ condition on each side of $\ell(7)$ to see $f(7) = 0$.  Apply both $C^0$- and $C^1$-condition at vertex $(2,6,7)$ to obtain $f(6)(0) = f(6)'(0) = 0$, and hence $f(6) = 0$.  Similarly, we have $f(8) = 0$.  We then deduce directly that $f(5) = f(9) = f(4) = 0$.

\begin{figure}
\centering
\includegraphics[width=3.5in]{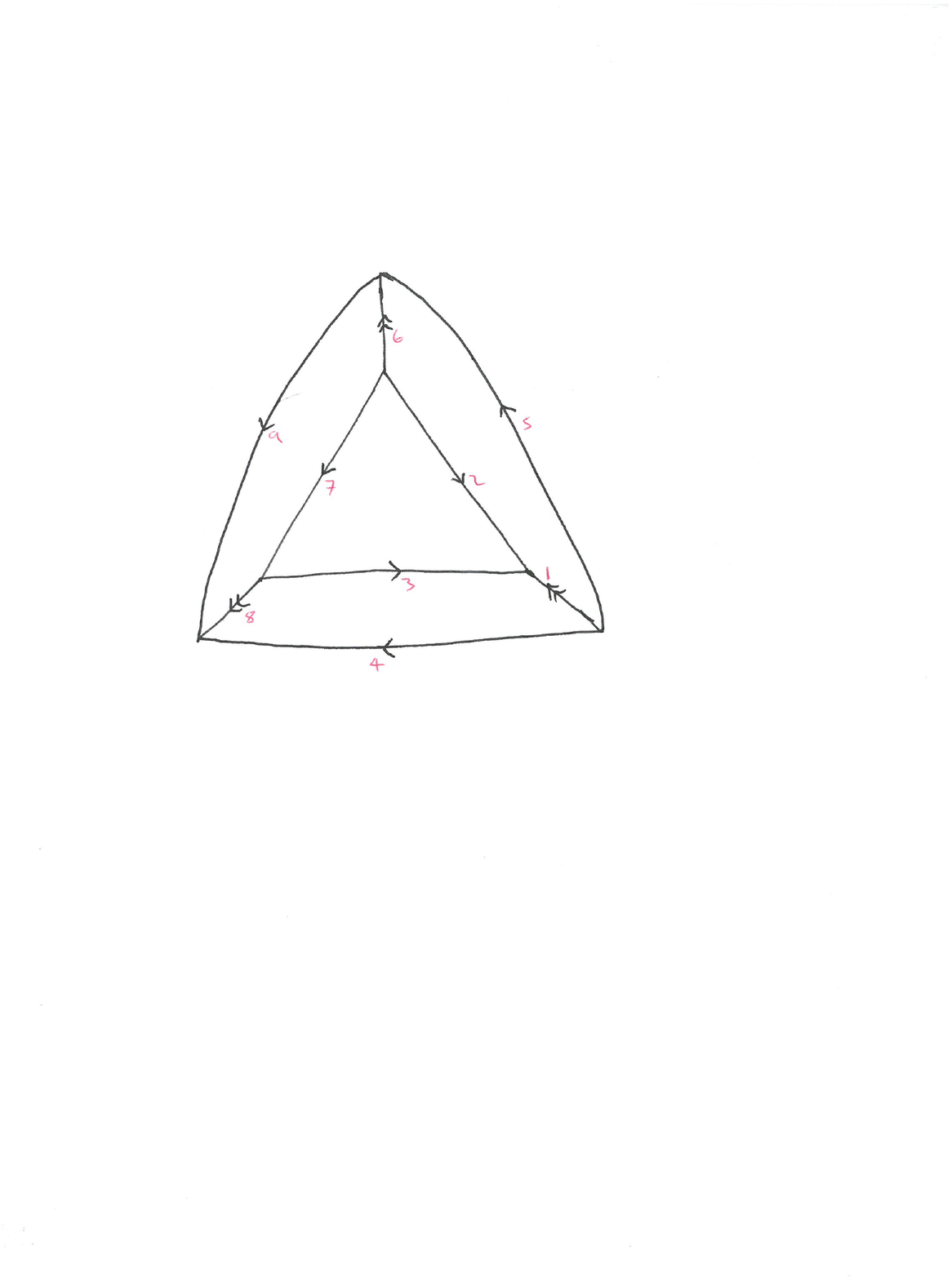}
\caption{Prism over regular triangle}
\end{figure}

\item \textbf{Regular dodecahedron}, having $30$ edges, each of length $\theta_1 =  41.810^\circ$.

\begin{figure}
\centering
\includegraphics[width=5.5in]{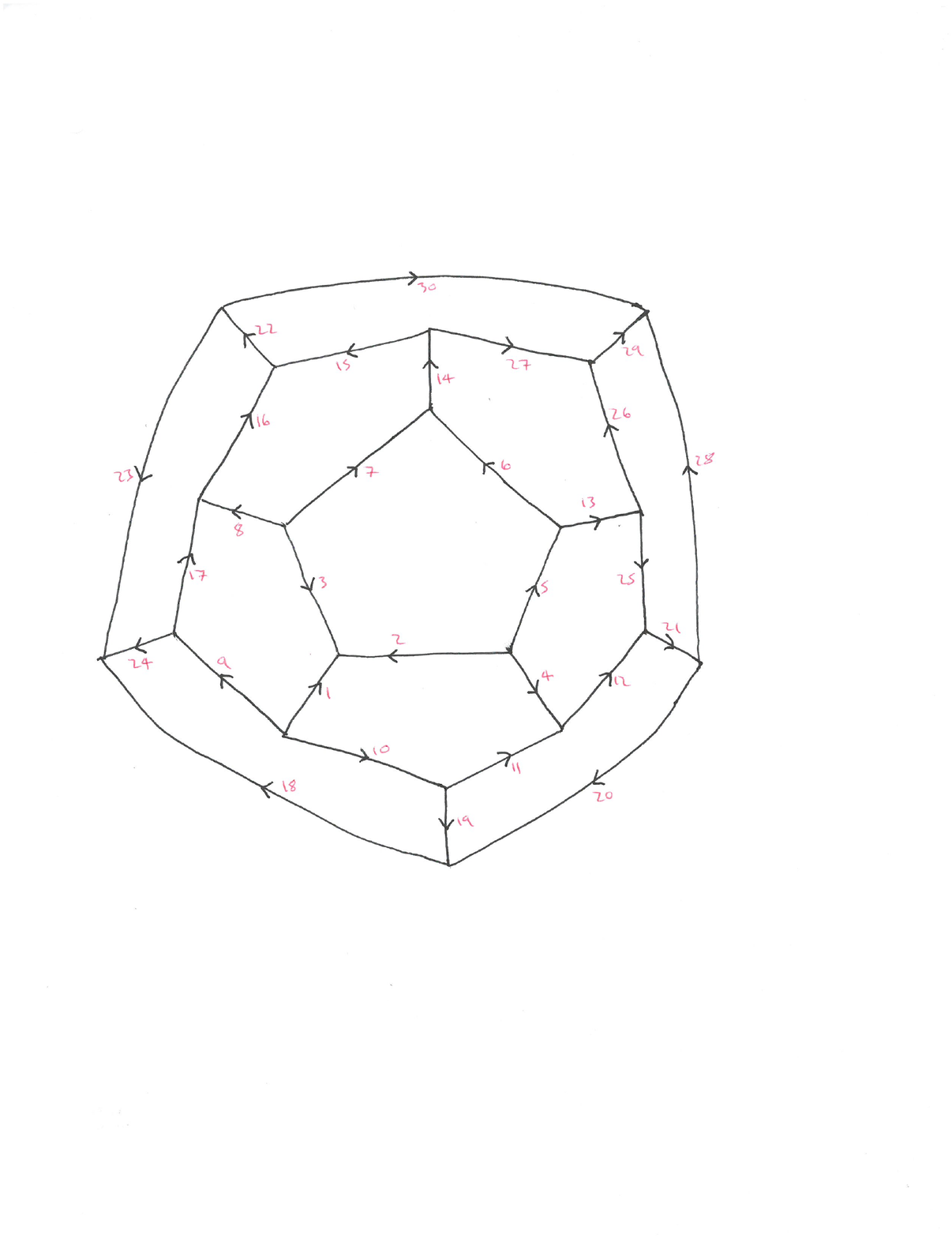}
\caption{Regular dodecahdron}
\end{figure}

We have immediately the equations
\begin{align}
&f(5) = A \cos\theta, \quad f(4) = -A\cos\theta, \quad f(10) = B\cos\theta, \quad f(9) = -B\cos\theta,\\
&\quad f(8) = G \cos\theta, \quad f(7) = -G\cos\theta,
\end{align}
for some constants $A, B, G$.  By symmetry it will suffice to show that $A = B = G = 0$.  We obtain, using the above and the compatibility conditions,
\begin{align}
&f(11) = -B\sin\theta_1 \sin\theta - (B\cos\theta_1 + A)\cos\theta, \\
&f(17) = B\sin\theta_1 \sin\theta + (G + B\cos\theta_1)\cos\theta ,\\
&f(6) = -A\sin\theta_1 \sin\theta - (A\cos\theta_1 + G)\cos\theta , \\
& f(13) = -A\sin\theta_1 \sin\theta + (G + 2A\cos\theta_1)\cos\theta , \\
& f(12) = A\sin\theta_1 \sin\theta - (2A\cos\theta_1 + B)\cos\theta, \\
& f(24) = B\sin\theta_1 \sin\theta - (2B\cos\theta_1 + G)\cos\theta, \\
& f(19) = -B\sin\theta_1\sin\theta + (A+2B\cos\theta_1)\cos\theta, \\
& f(25) = -(G + 3A\cos\theta_1)\sin\theta_1 \sin\theta - (G\cos\theta_1 + 3A\cos^2\theta_1 + 3A\cos\theta_1 + B)\cos\theta, 
\end{align}
and
\begin{align}
f(21) = (3A\sin\theta_1\cos\theta_1 + B\sin\theta_1)\sin\theta + (-G-2B\cos\theta_1 - 6A\cos^2\theta_1 - 3A\cos\theta_1 + A) \cos\theta.
\end{align}
And
\begin{align}
f(20) &= \left[ A(9\cos^2\theta_1 + 3\cos\theta_1\sin\theta_1 - \sin\theta_1) + 3B\sin\theta_1\cos\theta_1 + G\sin\theta_1 \right] \sin\theta \\
&\quad \left[ A(9\cos^3\theta_1 + 3\cos^2\theta_1 - \cos\theta_1 + 1) + B(3\cos^2\theta_1 + 3\cos\theta_1) + G\cos\theta_1 \right] \cos\theta .
\end{align}

From $19$ and $24$, we obtain
\begin{gather}
f(18) = -(3B\sin\theta_1 \cos\theta_1 + A\sin\theta_1)\sin\theta - (A\cos\theta_1 + 3B\cos^2\theta_1 + 3B\cos\theta_1 + G)\cos\theta.
\end{gather}
But we have additionally $20$, which implies the relation:
\begin{align}
2G = A(-9\cos^2\theta_1 - 6\cos\theta_1 + 1) + B(-9\cos^2\theta_1 - 6\cos\theta_1 + 1).
\end{align}

We work upwards.  We have
\begin{align}
&f(14) = G\sin\theta_1\sin\theta - (2G\cos\theta_1 + A)\cos\theta, \\
&f(16) = -G\sin\theta_1 \sin\theta + (B + 2G\cos\theta_1)\cos\theta, \\
&f(15) = (3G\cos\theta_1\sin\theta_1 + A\sin\theta_1)\sin\theta + (3G\cos^2\theta_1 + 3G\cos\theta_1 + A\cos\theta_1 + B)\cos\theta
\end{align}
And
\begin{align}
&f(26) = -(3A\sin\theta_1\cos\theta_1 + G\sin\theta_1)\sin\theta + (2G\cos\theta_1 + B + 6A\cos^2\theta_1 + 3A\cos\theta_1 - A)\cos\theta
\end{align}

We calculate $27$.  Using $14$, $15$, we obtain
\begin{gather}
f(27) = (3G\sin\theta_1\cos\theta_1 + A\sin\theta_1) \sin\theta + ( G - 6G\cos^2\theta_1 - 3G\cos\theta_1 - 2A\cos\theta_1 - B)\cos\theta.
\end{gather}
Combining this with $26$ gives the relation:
\begin{gather}
2B = A( -9 \cos^2\theta_1 - 6\cos\theta_1 + 1) + G (-9\cos^2\theta_1 - 6\cos\theta_1 + 1).
\end{gather}

Let us proceed to the left.  We have
\begin{align}
f(22) = -(3G\sin\theta_1\cos\theta_1 + B\sin\theta_1)\sin\theta + (6G\cos^2\theta_1+3G\cos\theta_1 - G + A + 2B\cos\theta_1)\cos\theta.
\end{align}
Using $22$ and $24$ we obtain
\begin{align}
&f(23) = \left[ G(-9\cos^2\theta_1\sin\theta_1 - 3\cos\theta_1\sin\theta_1 + \sin\theta_1) - A\sin\theta_1 - 3B\sin\theta_1\cos\theta_1 \right] \sin\theta \\
&\quad + \left[ G(-9\cos^3\theta_1 - 3\cos^2\theta_1 + \cos\theta_1 - 1) - A\cos\theta_1 - B(3\cos^2\theta_1 + 3\cos\theta_1)\right]\cos\theta.
\end{align}
But now using additionally $18$, we obtain the relation
\begin{gather}
2A = G(-9\cos^2\theta_1 - 6\cos\theta_1 + 1) + B(-9\cos^2\theta_1 - 6\cos\theta_1 + 1).
\end{gather}

We thus have the three equations
\begin{gather}\label{eqn:dodec-final}
A = \alpha(B + G), \quad B = \alpha(A + G), \quad G = \alpha(A + B),
\end{gather}
where $\alpha = 1.5$ (to one decimal place).  One easily verifies the only solution to \eqref{eqn:dodec-final} is when $A = B = G = 0$, and by symmetry we deduce that $f(i) = 0$ for every $i$.


\item \textbf{Two regular quadrilaterals and eight equal pentagons}, forming $24$ edges: ``each quadrilateral surrounded by four pentagons, and each pentagons surrounded by four pentagons and one quadrilateral, the quadrilateral arcs being of length $\theta_2 = 70.529^\circ$, the arcs adjacent to no quadrilateral vertex being of length $\theta_3 = 52.448^\circ$, and the remaining edges being of length $\theta_1 = 21.428^\circ$.''

\begin{figure}
\centering
\includegraphics[width=5.5in]{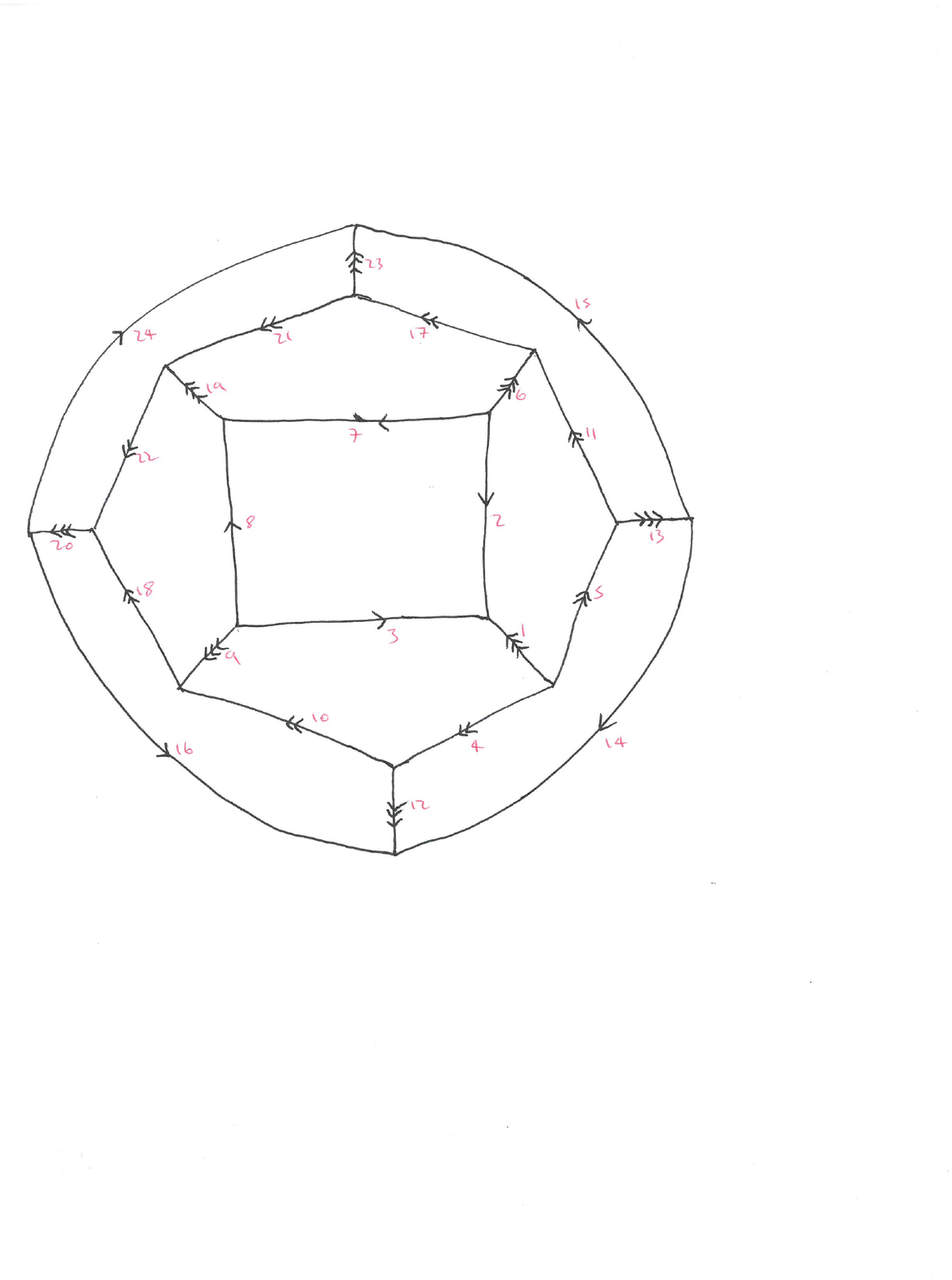}
\caption{Two regular quadrilaterals and eight equal pentagons}
\end{figure}


We have directly that
\begin{align}
&f(4) = -A\cos\theta, \quad f(5) = A\cos\theta, \\
&\quad f(6) = -B\cos\theta, \quad f(7) = B\cos\theta, \quad f(8) = B\cos\theta, \quad f(9) = -B\cos\theta,
\end{align}
for some constants $A, B$.  Using the compatability conditions at various vertices, we obtain
\begin{align}
&f(10) = A\sin\theta_3\sin\theta + (A\cos\theta_3 - B \frac{\sin\theta_1}{\sin\theta_3}) \cos\theta\\
&f(11) = -A\sin\theta_3\sin\theta - (A\cos\theta_3 + B \frac{\sin\theta_1}{\sin\theta_3})\cos\theta \\
&f(12) = A\sin\theta_3\sin\theta + (-2A\cos\theta_3 + B\frac{\sin\theta_1}{\sin\theta_3})\cos\theta \\
&f(13) = -A\sin\theta_3\sin\theta + (2A\cos\theta_3 + B\frac{\sin\theta_1}{\sin\theta_3})\cos\theta \\
&f(18) = B\sin\theta_1\sin\theta + (-B\cos\theta_1 - B \sin\theta_1 \frac{\cos\theta_3}{\sin\theta_3} + A)\cos\theta \\
&f(19) = -B\sin\theta_2\sin\theta + 2B\cos\theta_2\cos\theta \\
&f(17) = B\sin\theta_1 \sin\theta - (B\cos\theta_1 + B\sin\theta_1 \frac{\cos\theta_3}{\sin\theta_3} + A)\cos\theta
\end{align}

And we have
\begin{align}
&f(22) = \left[ -B\sin\theta_2 \cos\theta_1 - 2B\cos\theta_2\sin\theta_1 \right] \sin\theta \\
&\quad + \left[ B \frac{-\cos\theta_1\sin\theta_2\cos\theta_3 - 2\sin\theta_1\cos\theta_2\cos\theta_3 - 2\sin\theta_1\cos\theta_3}{\sin\theta_3} - B\cos\theta_1 + A \right]\cos\theta.
\end{align}
Using $17$ and $19$, we obtain
\begin{align}
&f(21) = (2B \sin\theta_1\cos\theta_3 + B\cos\theta_1\sin\theta_3 + A\sin\theta_3)\sin\theta \\
&\quad + \left[ B \frac{2\sin\theta_1\cos^2\theta_3  + \cos\theta_1\sin\theta_2 + 2\sin\theta_1\cos\theta_2}{\sin\theta_3} + B \cos\theta_1\cos\theta_3  + A \cos\theta_3 \right] \cos\theta
\end{align}
But then we can use the $C^0$ condition with $22$ to get the relation
\begin{align}
0 &= B \left[ 2\sin\theta_1\cos\theta_3\sin\theta_3 + 2\cos\theta_1 + 2\cos\theta_1\cos\theta_2 - \sin\theta_1\sin\theta_2 \right. \\
&\quad \left. +  \frac{\cos\theta_3}{\sin\theta_3} (-2\sin^3\theta_1 + 2\cos\theta0\sin\theta_1 + 4\sin\theta_1\cos\theta_2 ) \right] .
\end{align}
Notice the terms involving $A$ cancel!  One can readily calculate the term in the brackets is $= 3.3$ (to one decimal place), and therefore we must have $B = 0$.  We deduce
\begin{gather}
f(6) = f(7) = f(8) = f(9) = f(19) = 0.
\end{gather}

We now calculate
\begin{align}
&f(14) = A(-\sin\theta_3\cos\theta_1 - 2\cos\theta_3\sin\theta_1)\sin\theta \\
&\quad + A (\sin\theta_2)^{-1} \left[ -\cos\theta_1\cos\theta_2\sin\theta_3 - 2\sin\theta_1\cos\theta_2\cos\theta_3 - \cos\theta_1\sin\theta_3 - 2\cos\theta_3\cos\theta_1 \right] \cos\theta .
\end{align}
And
\begin{gather}
f(20) = -A\sin\theta_3\sin\theta + 2A\cos\theta_3\cos\theta.
\end{gather}

Since $B = 0$ we see $f(20)$ has precisely the same form as $f(13)$, and so by using $20$ and $12$ we see that $f(16)$ correspondingly has the same expression as $f(14)$.  Now we can additionally use the $C^0$ condition at vertex $(14, 12, 16)$ to get the condition
\begin{align}
&A \left[ -2\cos\theta_1\sin\theta_3 - 4\sin\theta_1\cos\theta_3 - 2\cos\theta_1\cos\theta_2\sin\theta_3 - 4\sin\theta_1\cos\theta_2 \right. \\
&\quad \left. + \sin\theta_1\sin\theta_2\sin\theta_3 - 2\cos\theta_1\sin\theta_2\cos\theta_3 \right] = 0.
\end{align}
The term in the brackets is $-3.8$ (to one decimal), and we deduce $A = 0$ also.  By symmetry we deduce $f(i) = 0$ for all $i$.



\item \textbf{Four equal quadrilaterals and four equal pentagons}, forming $18$ edges: ``each quadrilateral surrounded by three pentagons and one quadrilateral, and each pentagon by three quadrilaterals and two pentagons, and having the arcs held in common by two quadrilaterals (and the quadrilateral arcs opposite to them) being of length $\theta_3 = 83.802^\circ$ and the other quadrilateral arcs of length $\theta_2 = 58.257^\circ$ and all remaining edges of length $\theta_1 = 13.559^\circ$.''

\begin{figure}
\centering
\includegraphics[width=5in]{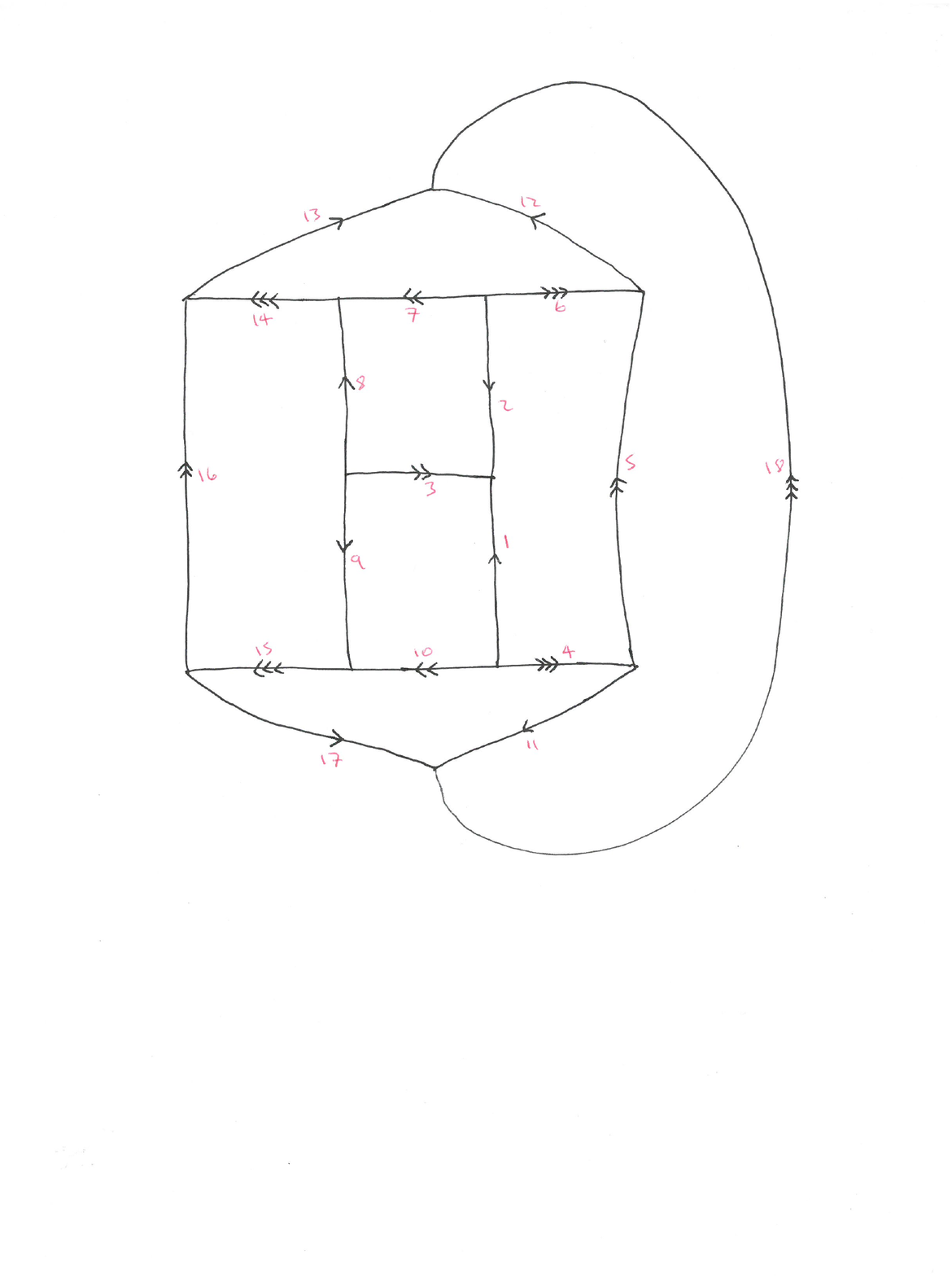}
\caption{Four equal quadrilaterals and four equal pentagons}
\end{figure}


Let us calculate.  We have directly
\begin{align}
&f(4) = A\cos\theta, \quad f(10) = -A\cos\theta, \quad f(9) = -A\frac{\sin\theta_3}{\sin\theta_2}\cos\theta, \quad f(8) = A\frac{\sin\theta_3}{\sin\theta_2}\cos\theta, \\
&\quad f(7) = A\cos\theta, \quad f(6) = -A\cos\theta,
\end{align}
for some constant $A$.  We have
\begin{align}
&f(5) = -A\sin\theta_1 \sin\theta - (A\sin\theta_1\frac{\cos\theta_3}{\sin\theta_3} + A \frac{\sin\theta_1}{\sin\theta_3})\cos\theta \\
&f(14) = -A\sin\theta_3 \sin\theta + (A \cos\theta_3 + A\frac{\sin\theta_3}{\sin\theta_2}\cos\theta_2)\cos\theta \\
&f(15) = A\sin\theta_3 \sin\theta - (A\cos\theta_3 + A\frac{\sin\theta_3}{\sin\theta_2} \cos\theta_2)\cos\theta
\end{align}
And
\begin{align}
&f(16) = A (\cos\theta_1\sin\theta_3 + \sin\theta_1\cos\theta_3 + \frac{\sin\theta_1\cos\theta_2\sin\theta_3}{\sin\theta_2})(\sin\theta + \frac{\cos\theta_3 + 1}{\sin\theta_3} \cos\theta).
\end{align}

We have
\begin{align}
&f(17) = A \left[ \cos\theta_1 \sin\theta_3 + \sin\theta_1\cos\theta_3 + \frac{\sin\theta_1\cos\theta_2\sin\theta_3}{\sin\theta_2} \right] \sin\theta \\
&\quad + A \left[ \sin\theta_1\sin\theta_3 - \cos\theta_1\cos\theta_3 - \frac{\cos\theta_1\cos\theta_2\sin\theta_3}{\sin\theta_2} \right. \\
&\quad\quad \left. - (\frac{\cos\theta_3 + 1}{\sin\theta_3})(\cos\theta_1\sin\theta_3 + \sin\theta_1\cos\theta_3 + \frac{\sin\theta_1\cos\theta_2\sin\theta_3}{\sin\theta_2} ) \right] \cos\theta.
\end{align}

Now using $4$, $5$, we obtain
\begin{gather}
f(11) = -A\sin\theta_1\sin\theta + A(\frac{\sin\theta_1}{\sin\theta_3} + \sin\theta_1\frac{\cos\theta_3}{\sin\theta_3} + \cos\theta_1) \cos\theta.
\end{gather}

But we additionally have a condition with $17$, giving us the relation:
\begin{align}
&A\left[ 2\sin\theta_1\cos\theta_2 + 2\sin\theta_1\cos\theta_2\cos\theta_3 + 2\frac{\sin\theta_1\sin\theta_2}{\sin\theta_3} + \frac{\sin\theta_1\sin\theta_3}{\sin\theta_2} - 3\sin\theta_1\sin\theta_2\sin\theta_3 \right. \\
&\quad \left. +2 \frac{\sin\theta_1\sin\theta_2\cos\theta_3}{\sin\theta_3}+ 2\cos\theta_1\sin\theta_2 + 2\cos\theta_1\cos\theta_2\sin\theta_3 + 2\cos\theta_1\sin\theta_2\cos\theta_3 \right] = 0.
\end{align}
Therefore we must have $A = 0$.  It follows directly that $f(i) = 0$.


\item \textbf{Three regular quadrilaterals and six equal pentagons}, forming $21$ edges: ``each quadrilateral surrounded by four pentagons and each pentagon by two quadrilaterals and three pentagons, with the quadrilateral edge being of length $\theta_2 = 70.529^\circ$, the pentagonal edge adjacent to just one quadrilateral vertex being of length $\theta_3 = 35.264^\circ$, and the remaining three edges of length $\theta_1 = 10.529^\circ$.''

\begin{figure}
\centering
\includegraphics[width=6in]{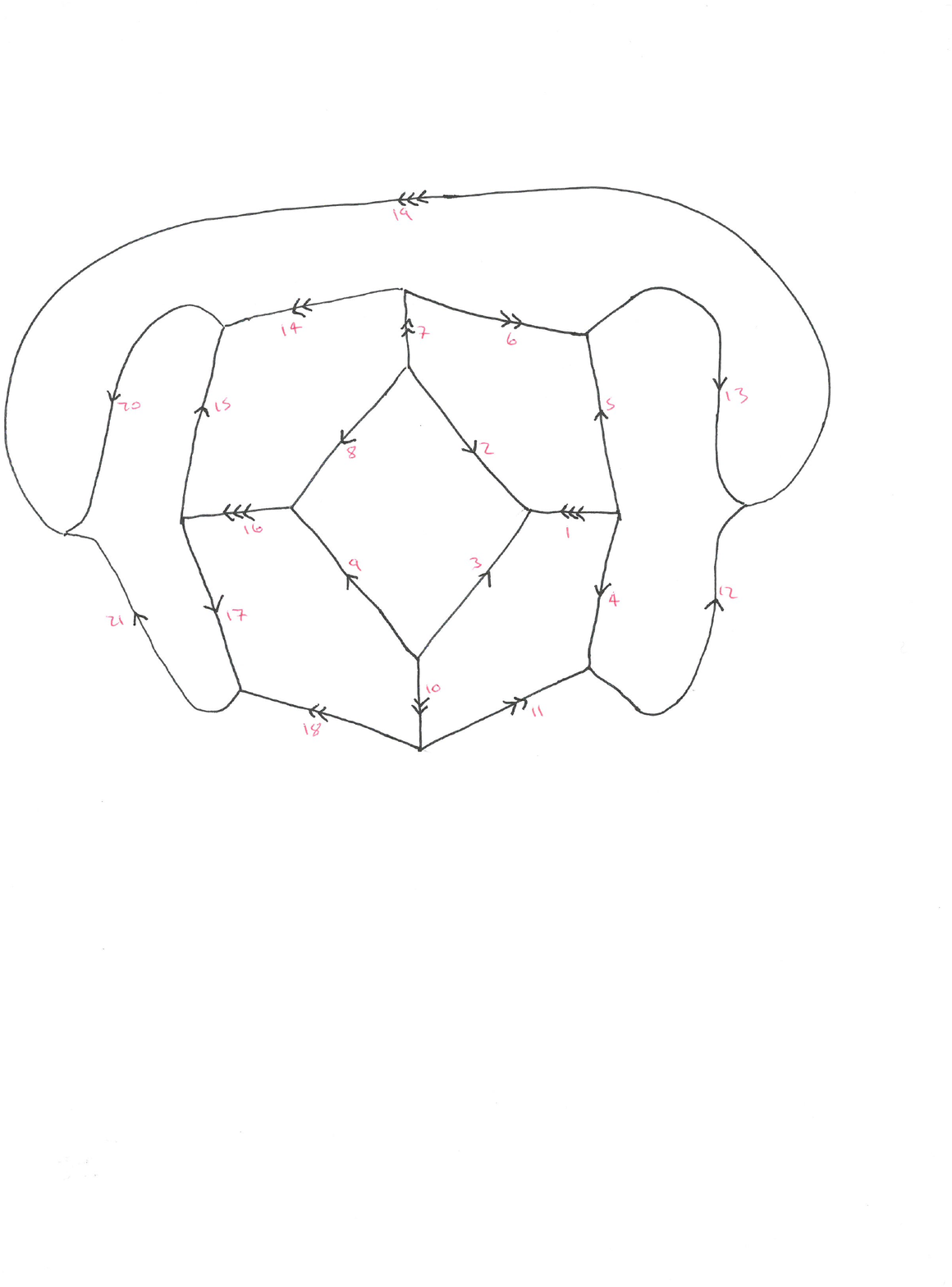}
\caption{Three regular quadrilaterals and six equal pentagons}
\end{figure}



We have directly that
\begin{align}
&f(7) = A\cos\theta, \quad f(8) = -A\cos\theta, \quad f(9) = -A\cos\theta, \quad f(10) = A\cos\theta \\
&\quad f(5) = B\cos\theta, \quad f(4) = -B\cos\theta,
\end{align}
for some constants $A, B$.  We obtain
\begin{align}
&f(6) = -A\sin\theta_3\sin\theta + (-A\cos\theta_3 + B\frac{\sin\theta_2}{\sin\theta_3})\cos\theta \\
&f(11) = -A\sin\theta_3\sin\theta - (A\cos\theta_3 + B \frac{\sin\theta_2}{\sin\theta_3})\cos\theta \\
&f(12) = B\sin\theta_2\sin\theta - (B\sin\theta_2 \frac{\cos\theta_3}{\sin\theta_3} + B\cos\theta_2 + A)\cos\theta \\
&f(13) = -B\sin\theta_2\sin\theta + (B\sin\theta_2\frac{\cos\theta_3}{\sin\theta_3} + B\cos\theta_2 - A) \cos\theta. 
\end{align}
But now we can use the $C^1$ condition at vertex $12, 19, 13$ to get the relation
\begin{gather}
B \left[ 2\cos\theta_2\sin\theta_3 + \sin\theta_2\cos\theta_3 \right] = 0,
\end{gather}
which necessitates that $B = 0$.

We proceed by calculating
\begin{align}
&f(16) = A\sin\theta_2\sin\theta - 2A\cos\theta_2\cos\theta \\
&f(14) = f(18) = -A\sin\theta_3 \sin\theta + 2A\cos\theta_3\cos\theta \\
&f(15) = f(17) = (A\cos\theta_1\sin\theta_2 + 2A\sin\theta_1\cos\theta_2)\sin\theta \\
&\quad + A\left[ \frac{\cos\theta_1\cos\theta_2\sin\theta_2 + 2\sin\theta_1\cos^2\theta_2 + 3\sin\theta_3\cos\theta_3}{\sin\theta_2} \right] \cos\theta.
\end{align}
But now we can apply the $C^0$ condition at vertex $16, 15, 17$ to get
\begin{gather}
A \left[ 6\cos\theta_3 \sin\theta_3 + 5\sin\theta_1\cos^2\theta_2 - \sin\theta_1 \right] = 0,
\end{gather}
which implies $A = 0$.  It then follows directly that $f(i) = 0$ for all $i$.

\end{enumerate}

This completes the proof of integrability when $k = 1$.  Suppose now $k \geq 2$.  We can handle the projection $\pi_{\R^3\times\{0\}} \circ v$ in precisely the same manner as above.  On the other hand, given any coordinate vector $e \in \{0\}\times \R^{k-1}$, let us define
\begin{gather}
f(i)(\theta) = e \cdot v(i)(\theta),
\end{gather}
and observe $f(i)$ takes the same form \eqref{eqn:form-of-f}.  By Lemma \ref{lem:vect-vs-scalar-cond} the compatibility conditions are now
\begin{gather}
f(i_1)(p) = f(i_2)(p) = f(i_3)(p), \quad \text{and} \quad \sum_{j=1}^3 (n(i_j) \cdot \hat \ell(i_j)) f'(i_j)(p) = 0,
\end{gather}
whenever $\ell(i_1), \ell(i_2), \ell(i_3)$ share a common vertex $p$.  Since $f'' + f = 0$, we see that the functions $f'(i)$ satisfy conditions \eqref{eqn:scalar-initial-conds}, \eqref{eqn:scalar-compat-conds}, and we can apply the proof above to deduce every $f'(i) = 0$.  This implies $f = 0$, and hence $\pi_{\{0\}\times \R^{k-1}} \circ v$ is zero also.
\end{proof}


\section{Corollaries}\label{sec:corollaries}

Given Theorem \ref{thm:main-decay} and some background results on $(\mass, \eps, \delta)$-minimizing sets, the proofs of our Corollaries are essentially standard.

\begin{proof}[Proof of Corollary \ref{cor:no-spine-reg}]
The argument is standard, but we include it for completeness.  Take $\delta_1(\bC)$ as in Theorem \ref{thm:main-decay}, and $\eps_1(\bC, \beta = 1/100, \tau = 1/100)$ as in Lemma \ref{lem:poly-graph}.  Ensure $\delta \leq \delta_1$.

If $M$ is such that $M \in \cN_\delta(\bC)$, and $\theta_M(0) \geq \theta_{\bC}(0)$, then $E_{\delta_1}(M, \bC, 1) \leq \delta_1^2$, and $M$ satisfies the $\delta$-no-holes condition in $B_{3/4}$ w.r.t. $\bC$, for all $\delta > 0$.  We deduce by Theorem \ref{thm:main-decay} there is a sequence of rotations $q_i$ so that
\begin{gather}
E_{\delta_1}(M, q_i(\bC), \theta^i) \leq 2^{-i} E_{\delta_1}(M, \bC, 1).
\end{gather}

It follows that $|q_i - q_{i+1}| \leq c(\bC) 2^{-i} E(M, \bC, 1)$, and in particular there is a rotation $q$ so that
\begin{gather}
\rho^{-n-2} \int_{M \cap B_\rho} d_{q(\bC)}^2 \leq c(\bC) \rho^{2\mu} E(M, \bC, 1)
\end{gather}
for all $\rho \leq 1$, and for some $\mu = \mu(\bC)$.  Ensuring $c(\bC) \delta \leq \eps_1$, we can apply Lemma \ref{lem:poly-graph} at any scale $B_\rho$, with $\mu \leq \alpha$ in place of $\alpha$, to obtain a uniform $C^{1,\mu}$ decomposition of $M$ over $\bC$.  That is, in the sense of Definition \ref{def:poly-graph}, we have $M \cap B_{1/2} = \graph_{\bC}(u, f, \Omega)$, where $u$ and $f$ admit the pointwise bounds
\begin{align}
r^{-1}|u(i)| + |Du(i)| + r^\mu [Du(i)]_{\mu, \bC} \leq c(\bC) r^{\mu} E(M, \bC, 1)^{1/2} \\
r^{-1} |f(i)| + |Df(i)| + r^\mu [Df(i)]_{\mu, \bC} \leq c(\bC)r^{\mu} E(M, \bC, 1)^{1/2}.
\end{align}
\end{proof}

\begin{proof}[Proof of Theorem \ref{thm:spine-reg}]
The argument is same as the proof given in Section \ref{sec:graph} for Theorem \ref{thm:Y-reg}, except using Proposition \ref{prop:no-holes} in place of \ref{prop:no-holes-Y}, and Simon's $\eps$-regularity in addition to Allard's.
\end{proof}

To prove Theorems \ref{thm:main-reg} and \ref{thm:clusters} we need a few background results.  First, we prove assertion 3) of Theorem \ref{thm:background-clusters}, as promised.
\begin{lemma}
The underlying varifold $M^n = \haus^n \llcorner (\partial^* \clus(1) \cup \ldots \cup \partial^* \clus(N))$ associated to a minimizing $N$-cluster (where $\partial^*$ denotes the reduced boundary) has bounded mean curvature, and no boundary.  As a corollary, $M = \haus^n \llcorner (\partial \clus(1) \cup \ldots\cup \clus(N))$, where $\partial$ denotes the \emph{topological} boundary.
\end{lemma}

\begin{proof}
For convenience write $V = \{ \ba \in \R^{N+1} : \sum_h a_h = 0 \}$.  From \cite[Theorem VI.2.3]{Alm-eps-delta}/\cite[Theorem IV.1.14]{maggi}, we have the following: for any $N$-cluster $\clus$, there are constants $\eta, c, R$ (depending only on $\clus$), and a $C^1$ function
\begin{gather}
\Psi : B_\eta^{N+1} \times \R^{n+1} \to \R^{n+1},
\end{gather}
with $\Psi_{\ba = 0} = Id$, which satisfies for any $\ba \in B_\eta^{N+1} \cap V$:
\begin{gather}\label{eqn:vol-preserving-prop}
\spt (\Psi_{\ba} - Id) \subset B_R, \quad |\Psi_{\ba}(\clus(h)) \cap B_R| = |\clus(h) \cap B_R| + a_h, \quad | D \Psi_{\ba} - Id| \leq c \sum_{h=1}^N |a_h|.
\end{gather}
Of course we can also assume $B_R$ contains all the bounded chambers $\{ \clus(h) \}_{h=1}^N$.

Now suppose $\clus$ is a minimizing $N$-cluster, take $\Psi$ as above, and consider an arbitrary $C^1$ vector field $X$ supported in $B_R$ generating flow $\phi_t$.  Define the function $F : \R \times (B_\eta^{N+1} \cap V) \to \R^N$ by setting
\begin{gather}
F^h(t, \ba) = |\phi_t(\Psi_{\ba}(\clus(h)))| - |\clus(h)| .
\end{gather}
Choosing coordinates on $V$ via the map
\begin{gather}
\bb \in \R^N \mapsto (-\sum_{i=1}^N b_i, b_1, \ldots, b_N) \in \R^{N+1} \cap V,
\end{gather}
we obtain that
\begin{gather}
F(0, 0) = 0, \quad \partial_{b_h} F^k|_{(0, 0)} = \delta_{kh} \quad \forall k, h = 1, \ldots, N.
\end{gather}
Therefore, by the implicit function theorem we can find a $C^1$ curve $\ba : (-\eps, \eps) \to B_\eta^{N+1} \cap V$, so that $\ba(0) = 0$ and $F(t, \ba(t)) \equiv 0$.  In other words, the variation $\phi_t \circ \Psi_{\ba(t)}$ preserves the volume vector of $\clus$.

If $Y$ is the initial velocity vector field for $\Psi_{\ba(t)}$, then by \eqref{eqn:vol-preserving-prop} we have $|DY| \leq c \sum_{h=1}^N |a'_h(0)|$.  On the other hand, since $D_t F(t, \ba(t)) = 0$, we have for each $h$:
\begin{gather}
0 = \int_{\partial^* \clus(h)} (X + Y) \cdot \nu = \int_{\partial^* \clus(h)} X \cdot \nu + a'_h(0).
\end{gather}

Therefore, since $\clus$ is minimizing for volume-vector-preserving deformations, 
\begin{gather}
\int_M div_M(X) = -\int_M div_M(Y) \leq c \int_M |X|.
\end{gather}
This shows that $\delta M$ forms a bounded linear operator on $L^1(\mu_M)$, which implies $M$ has no boundary and bounded $H_M$.
\end{proof}

Next, we give a general ``sheeting'' theorem for $(\mass, \eps,\delta)$-minimizing varifolds, which effectively says that this class forms a multiplicity-one class.  This is well-known, and essentially the same as \cite[Corollary II.2]{taylor}.
\begin{lemma}\label{lem:almost-min-mult}
Let $M_i^n = \haus^n \llcorner \spt M_i$ be a sequence of (multiplicity-one) integral varifolds in $U \subset \R^{n+k}$ without boundary, such that: the $M_i$ have uniformly bounded mean curvature and mass, and each $\spt M_i$ is $(\mass, \eps, \delta)$-minimizing in $U$ (for uniform $\eps$, $\delta$).

If $M_i \to M$ as varifolds in $U$, then $M = \haus^n \llcorner \spt M$, and $\spt M$ is $(\mass, \eps, \delta)$-minimizing in $U$.  In particular, if $\bC$ is any tangent cone for $M$, then $\bC$ has multiplicity-one and $\spt \bC$ is $(M, 0, \infty)$-minimizing.
\end{lemma}

\begin{proof}
Since $M$ is integral, at $\mu_M$-a.e. $x$ we have an approximate tangent plane $P$.  Fix such an $x$, and suppose towards a contradiction that $\theta_M(x) = q > 1$.  By monotonicity, for sufficiently small $r$ and $i >> 1$, both $B_r(x) \cap \spt M$ and $B_r(x) \cap \spt M_i$ lie in an $\eta r$-neighborhood of $x + P$.  Therefore, if we construct a $C^1$ deformation which pushes $B_{r/2}(x)$ into $B_{r/2}(x) \cap (x + P)$, we save $\geq c(n) (q-1) r^n$ amount of area in $M_i$.  This contradicts $(\mass, \eps, \delta)$-minimality.

That $M$ is $(\mass, \eps, \delta)$-minimizing follows directly from the facts: a) any piecewise $C^1$ mapping $\phi$ induces a continuous map $\phi_\sharp$ on the space of integral varifolds; and b) any Lipschitz deformation on $M$ can be well-approximated by piecewise-$C^1$ deformations. 
\end{proof}

The last crucial fact we need is Taylor's classification of $2$-dimensional, $(\mass, 0, \infty)$-minimizing cones in $\R^3$.  The classification for $1$-d cones is trivial.  The following Lemma is a straightforward consequence of \cite[Proposition II.3]{taylor}.
\begin{lemma}\label{lem:almost-min-tangent}
Let $\bC^n$ be an $(\mass, 0, \infty)$-minimizing cone in $\R^{n+k}$.  If $\bC = \bC_0^1 \times \R^{n-1}$, then up to rotation $\bC_0 = \bY$.  If $k = 1$ and $\bC = \bC_0^2\times \R^{n-2}$, then (up to rotation) $\bC_0$ is either $\R^2$, $\bY\times \R$, or $\bT$.
\end{lemma}

Lemma \ref{lem:almost-min-tangent} highlights the importance of the cones $\bY\times \R$ and $\bT$: up to factors of $\R^m$, they are the only singular cones arising in the top three strata of $(\mass, \eps, \delta)$-minimizing sets.  Moreoever, they always occur with multiplicity one.  From these facts Theorem \ref{thm:clusters} follows in a straightforward way from our decay Theorem \ref{thm:main-decay} and no-holes Proposition \ref{prop:no-holes}.

\begin{proof}[Proof of Theorem \ref{thm:main-reg}/Theorem \ref{thm:clusters}]
Recall the definitions of $k$-strata and $(k,\eps)$-strata as given in Section \ref{sec:prelim}.  Let us define $M_k = S^k(M)$ to be the $k$-th stratum, for $k = n-3, \ldots, n$.  Conclusions 1), 2), 3) follow immediately from Lemmas \ref{lem:almost-min-mult}, \ref{lem:almost-min-tangent}, and the $\eps$-regularity Theorems of Allard (Theorem \ref{thm:allard}), Simon (Theorem \ref{thm:Y-reg}), and Theorem \ref{thm:spine-reg}.

More generally, the aforementioned Lemmas and Theorems show each stratum $S^m(M)$ (for $m = n, n-1, n-2, n-3$) is closed in the following sense: suppose $M_i$ is a family of varifolds satisfying the hypotheses of Theorem \ref{thm:main-reg} with uniform bounds on mass, mean curvature, and uniform $\eps, \delta$.  If $M_i \to M$, and $x_i \in S^m(M_i)$ converge to $x \in U$, then $x \in S^m(M)$.

We claim that, for every compact $K \subset U$, there is an $\eps> 0$ so that $S^{n-3} \cap K \subset S^{n-3}_\eps$.  This is an easy consequence of the closedness of the strata.  Otherwise, if the claim was false, we would have sequences $x_i \to x \in K \cap S^{n-3}$, $\eps_i \to 0$, and $r_i \in (0, \min\{d(x_i, \partial U), 1\})$, for which $M$ is $(n-2, \eps_i)$-symmetric in $B_{r_i}(x_i)$.  Let $M_i = r_i^{-1}(M - x_i)$.  Then the $M_i$ have uniformly bounded mass and first-variation in $B_1$, each $M_i$ is $(n-2, \eps_i)$-symmetric in $B_1$, while $0 \in S^{n-3}(M_i)$.  

Passing to a subsequence, we have varifold convergence $M_i \to \bC$, where $\bC$ is a $(n-2)$-symmetric cone.  But by the closedness property, $0 \in S^{n-3}(\bC)$.  This is a contradiction.  Conclusion 4) is now a consequence of Naber-Valtorta \cite{naber-valtorta}.
\end{proof}

\section{Appendix}

\subsection{Linear algebra} We require some elementary linear algebra.  The following Lemma relates vectorial and scalar compatability conditions.  Notice how the scalar conditions in different cases are dual to each other.
\begin{lemma}\label{lem:vect-vs-scalar-cond}
Let $\omega_1, \omega_2, \omega_3$ be unit vectors, with $\omega_1 + \omega_2 + \omega_3 = 0$, and take vectors $v_1, v_2, v_3$ so that $v_i \perp \omega_i$ for each $i$.  Write $P^2$ for the $2$-plane spanning $\omega_i$.

\begin{enumerate}
\item[A)]
We can write $\pi_P(v_i) = \alpha_i e^{i\pi/2}\omega_i$.  Then 
\begin{gather}
\pi_{P}(v_i) = \pi_{<\omega_i>^\perp}(u) \text{ for some fixed $u$} \iff \sum_i \alpha_i = 0,
\end{gather}
and
\begin{gather}
\sum_i \pi_P(v_i) = 0 \iff \alpha_1 = \alpha_2 = \alpha_3.
\end{gather}

\item[B)] Suppose $\pi_{P^\perp}(v_i) = \alpha_i v$ for some fixed $v \in P^\perp$.  Then
\begin{gather}
\pi_{P^\perp}(v_i) = \pi_{<\omega_i>^\perp}(u) \text{ for some fixed $u$} \iff \alpha_1 = \alpha_2 = \alpha_3,
\end{gather}
and
\begin{gather}
\sum_i \pi_{P^\perp}(v_i) = 0 \iff \sum_i \alpha_i = 0.
\end{gather}
\end{enumerate}
Here $<\omega_i>^\perp$ denotes the orthogonal complement to the line spanned by $\omega_i$.
\end{lemma}

\begin{proof}
Since part B) is obvious, let us concentrate on part A).  For ease of notation we can swap the role of $\omega_i$ and $e^{i\pi/2}\omega_i$.  Let us identify $P$ with $\R^2$, and the $\omega_i$ with $1, e^{2\pi i/3}, e^{4\pi i/3}$.

The ``only if'' direction of the first statement is obvious.  Conversely, given $\alpha_i$ with $\sum_i \alpha_i$, define
\begin{gather}
u = \alpha_1 \omega_1 + \frac{1}{\sqrt{3}}(\alpha_2 - \alpha_3) e^{i\pi/2} \omega_1.
\end{gather}
Trivially $\pi_{\omega_1}(u) = \alpha_1$, and we calculate
\begin{align}
\pi_{\omega_2}(u)
&= \alpha_1 (\omega_2 \cdot \omega_1) + \frac{1}{\sqrt{3}}(\alpha_2 - \alpha_3) (\omega_2 \cdot (e^{i\pi/2}\omega_1)) \\
&= \frac{-1}{2} \alpha_1 + \frac{1}{2}(\alpha_2 - \alpha_3) \\
&= \alpha_2.
\end{align}
By a symmetric calculation we have $\pi_{\omega_3}(u) = \alpha_3$ also.

We prove the second assertion of A).  We have $e_2 \cdot \omega_2 = -e_2 \cdot \omega_3 = \sqrt{3}/2$, and $e_1 \cdot \omega_2 = e_1\cdot \omega_3 = -1/2$.  Therefore,
\begin{align}
\sum_i \alpha_i \omega_i = 0  
&\iff \sqrt{3}/2 (\alpha_2 - \alpha_3) = 0 \text{ and } \alpha_1 - \frac{1}{2}(\alpha_2 + \alpha_3) = 0 \\
&\iff \alpha_1 = \alpha_2 = \alpha_3. \qedhere
\end{align}
\end{proof}

\begin{lemma}\label{lem:sum-is-zero}
Suppose $\omega_1, \omega_2, \omega_3$ are unit vectors, with $\omega_1 + \omega_2 + \omega_3 = 0$.  Let $v_1, v_2, v_3$ be vectors, such that $v_i \perp \omega_i$ for each $i$.

Then the following are equivalent:
\begin{enumerate}
\item[A)] $v_1 + v_2 + v_3 = 0$;

\item[B)] There is a skew-symmetric $A$, which is zero on the orthogonal complement of $\mathrm{span}(v_1, v_2, v_3, \omega_1, \omega_2, \omega_3)$, such that $Av_i = \omega_i$;

\item[C)] For any vector $u$, we have $\sum_i v_i \cdot \pi_{<\omega_i>^\perp}(u) = 0$.  Here $<\omega_i>^\perp$ is the orthogonal complement to the line spanned by $\omega_i$.
\end{enumerate}
\end{lemma}

\begin{proof}
We show A) implies B).  The converse B) $\implies$ A) is trivial.  If $P^2$ is the plane containing the points $0, \omega_1, \omega_2$, then clearly $\omega_3$ must lie in $P$ also.  Therefore, after a suitable rotation, we can identify $P^2$ with $\R^2$, and the $\omega_i$ with $1, e^{2\pi i/3}, e^{4\pi i/3} \in \R^2$.

Let $v_i^T$ and $v_i^\perp$ be the orthogonal projections of $v_i$ to $P$ and $P^\perp$ respectively.  Define the matrix
\begin{gather}
A_{ij} = \sum_{\ell=1}^3 \left( \omega_\ell \wedge \left( \frac{v_\ell^T}{3/2 + \sqrt{3}/2} + \frac{v_\ell^\perp}{3/2} \right) \right)(e_j, e_i),
\end{gather}
where $e_i$ is the standard basis of $\R^{n+k}$.  Of course in Euclidean space we can identify vectors and covectors via the standard inner product.  Clearly $A_{ij}$ is skew-symmetric.

By symmetry it will suffice to show $A\omega_1 = v_1$.  First, since $\sum_i v_i^T = 0$ and $v_i^T \cdot \omega_i = 0$, then one can easily check that
\begin{gather}
v_i^T = \alpha e^{i \pi/2} \omega_i \quad i = 1, 2, 3,
\end{gather}
for some fixed $\alpha \in \R$, i.e. each $v_i^T$ is a $90^0$ rotation of $\alpha \omega_i$.  We therefore have
\begin{align}
(3/2 + \sqrt{3}/2) (A \omega_1)^T 
&= v_1^T + (\omega_2 \cdot \omega_1) v_2^T + (\omega_3 \cdot \omega_1) v_3^T - (v_2\cdot \omega_1) \omega_2 - (v_2 \cdot \omega_1) v_3 \\
&= v_1^T - \frac{1}{2} (v_2^T + v_3^T) - \alpha ( (e^{i\pi/2} \omega_2)\cdot \omega_1) \omega_2 - \alpha ((e^{i\pi/2} \omega_3) \cdot \omega_1) \omega_3 \\
&= \frac{3}{2} v_1^T + \frac{\alpha}{2} (\omega_2 - \omega_3) \\
&= \frac{3}{2} v_1^T + \frac{\sqrt{3}}{2} \alpha e^{i\pi/2} \omega_1 \\
&= (3/2 + \sqrt{3}/2) v_1^T.
\end{align}

Similarly, we have
\begin{align}
\frac{3}{2} (A \omega_1)^\perp 
&= v_1^\perp + (\omega_2 \cdot \omega_1) v_2^\perp + (\omega_3 \cdot \omega_1) v_3^\perp = v_1^\perp - \frac{1}{2} (v_2^\perp + v_3^\perp) = \frac{3}{2} v_1^\perp.
\end{align}
This shows $A\omega_1 = v_1$.

We show A) $\iff$ C).  With $P$ as above, we trivially have that
\begin{gather}
\pi_{<\omega_i>^\perp}(u) = u \quad \forall u \in P^\perp.
\end{gather}
Therefore $\sum_i v_i^\perp = 0$ if and only if $\sum_i v_i \cdot \pi_{<\omega_i>^\perp}(u) = 0$ for all $u \in P^\perp$.

On the other hand, given $u \in P$, and our assumption $v_i \perp \omega_i$, then we can write
\begin{gather}
\pi_{<\omega_i>^\perp}(u) = \beta_i e^{i\pi/2} \omega_i, \quad v_i^T = \alpha_i e^{i\pi/2} \omega_i,
\end{gather}
where $\beta_i \in \R$ satisfy $\sum_i \beta_i = 0$, and $\alpha_i \in \R$.  Then, using Lemma \ref{lem:vect-vs-scalar-cond}, we have
\begin{align}
\sum_i v_i^T = 0 
&\iff \alpha_1 = \alpha_2 = \alpha_3 \\
&\iff \sum_i \alpha_i \beta_i = 0 \quad \forall \beta_i \text{ such that} \sum_i \beta_i = 0 \\
&\iff \sum_i v_i \cdot \pi_{<\omega_i>^\perp}(u) = 0 \quad \forall u \in P^T.
\end{align}
This completes the proof.
\end{proof}

\subsection{Two variation inequalities}\label{sec:variation}

We sketch the proof of the estimates \eqref{eqn:lem-density-1} and \eqref{eqn:lem-density-2}.  Both are minor modifications of the derivation given in \cite{simon1}.

\begin{lemma}
Let $\bC = \bC^\ell_0 \times \R^m$, and take $M \in \cN_\eps(\bC)$ with $\theta_M(0) \geq \theta_\bC(0)$.  Let $\phi: \R \to \R$ be any smooth function satisfying $\phi' \leq 0$, $\phi \equiv 1$ on $[0, 1/10]$, and $\phi \equiv 0$ on $[2/10, \infty)$.  Then we have
\begin{gather}\label{eqn:app-density-1}
\frac{1}{2} n (1/10)^n \int_{M \cap B_{1/10}} \frac{|X^\perp|^2}{R^{n+2}} \leq \int_M \phi^2(R) - \int_{\bC} \phi^2(R) + c(\bC, \phi) ||H_M||_{L^\infty(B_1)},
\end{gather}
and
\begin{align}\label{eqn:app-density-2}
\ell \left( \int_M \phi^2(R) - \int_{\bC} \phi^2(R) \right) &\leq \left( \int_M 2\phi |\phi'| r^2/R - \int_{\bC} 2\phi |\phi'| r^2/R \right) \\
&\quad  + \int_M 2\phi (\phi')^2 |(x, 0)^\perp|^2 + c(\bC, \phi) ||H_M||_{L^\infty(B_1)}.
\end{align}
\end{lemma}

\begin{proof}
Write $\Lambda = ||H||_{L^\infty(B_1)}$.  By the monotonicity formula (see e.g. \cite{simon:gmt}) we have
\begin{gather}
e^{\Lambda \rho}\theta_M(0, \rho) - \theta_M(0) \geq \int_{M \cap B_\rho} \frac{|X^\perp|^2}{R^{n+2}} \quad \forall \rho < 1.
\end{gather}

By plugging (a $C^1$ approximation to) the vector field $(x, y) 1_{B^{n+k}_\rho}$ into the first variation \eqref{eqn:first-variation}, and using the coarea formula, we obtain
\begin{gather}
\frac{n}{\rho} \mu_M(B_\rho) - c\Lambda \leq D_\rho \int_{M \cap B_\rho} |\nabla^T R|^2 \leq D_\rho \mu_M(B_\rho).
\end{gather}
Therefore, taking $\eps \geq \Lambda$ small, by the monotonicity formula and our assumption $\theta_M(0) \geq \theta_{\bC}(0)$ we have
\begin{align}
\frac{1}{2} n \rho^{n-1} \int_{M \cap B_\rho} \frac{|X^\perp|^2}{R^{n+2}} 
&\leq D_\rho \mu_M(B_\rho) - e^{-\Lambda\rho} n\rho^{n-1} \theta_\bC(0) + c\Lambda \\
&\leq D_\rho( \mu_M(B_\rho) - \mu_\bC(B_\rho)) + c(\bC)(\Lambda + (1 - e^{-\Lambda}))
\end{align}
Now multiply by $\phi^2(\rho)$ and integrate in $\rho \in [0, 1]$ to obtain \eqref{eqn:app-density-1}.

We prove \eqref{eqn:app-density-2}.  Plugging the vector field $(x, 0) \phi^2(R)$ into the first variation, and rearranging, gives
\begin{gather}\label{eqn:app-density-3}
\int_M (\ell + \frac{1}{2} <M^\perp, \{0\}\times \R^m>^2)\phi^2 \leq \int_M 2\phi|\phi'| r^2/R + 2(\phi')^2 |(x, 0)^\perp|^2 + c(\bC, \phi) \Lambda .
\end{gather}
On the other hand, using Fubini and integrating by parts in $r$, gives
\begin{gather}\label{eqn:app-density-4}
\ell \int_{\bC} \phi^2 = \int_\axis \int_0^\infty \phi(\sqrt{r^2 + |y|^2})^2 \,\ell r^{\ell-1} \theta_{\bC}(0) dr dy  = \int_\bC -2\phi \phi' r^2/R.
\end{gather}
Now subtract \eqref{eqn:app-density-3} from \eqref{eqn:app-density-4}.
\end{proof}

\subsection{Graphicality for $\bC_0$ smooth}\label{sec:graphical-smooth}

We prove the analogue of decomposition Lemma \ref{lem:poly-graph} when $\bC_0^\ell$ is \emph{smooth}, which allows us in certain circumstances to remove the multiplicity-one hypothesis of \cite{simon1}.  The proof is essentially the same as for Lemma \ref{lem:poly-graph}, but simpler.

In this section we always assume $\bC^n = \bC_0^\ell\times \R^m$, for $\bC_0$ smooth.  Recall the torus
\begin{gather}
U(\rho, y, \gamma) = \{ (\xi, \eta) \in \R^{\ell+k}\times \R^m: (|\xi| - \rho)^2 + |\eta - y|^2 \leq \gamma \rho^2 \},
\end{gather}
and the ``halved-torus''
\begin{gather}
U_+(\rho, y, \gamma) = U(\rho, y, \gamma) \cap \{ (\xi, \eta) : |\xi| \geq \rho \}.
\end{gather}

We first demonstrate global graphical structure, but without good estimates.
\begin{lemma}\label{lem:global-graph}
For any $\beta, \tau > 0$ there is an $\eps_1(\refC, \beta, \tau)$ so that the following holds.  Take $M \in \cN_{\eps_1}(\refC)$.  Then there is a domain $\Omega \subset \bC$, and smooth function $u : \Omega \to \bC^\perp$, so that
\begin{gather}\label{eqn:global-graph}
M \cap B_{3/4} \setminus B_\tau(\{0\}\times \R^m) = \graph_C(u), \quad r^{-1} |u| + |\nabla u| \leq \beta.
\end{gather}
\end{lemma}

\begin{proof}
This is essentially a direct Corollary of Lemma \ref{lem:mult-one}.  If the Lemma failed, we would have a counter-example sequence $M_i$.  Passing to a subsequence, we have multiplicity-$1$ convergence $M_i \to \refC$, on compact subsets of $B_1$.  Therefore, by Allard convergence is smooth in $B_1 \setminus (\{0\}\times \R^m)$.
\end{proof}

\begin{lemma}\label{lem:tiny-graph}
For any $\beta > 0$ there is an $\eps_2(\refC, \beta)$ so that the following holds.  Take $M \in \cN_{1/10}(\refC)$. Take $\rho \leq 1/2$, and $\eta \in B_{3/4}^m(0)$, and suppose
\begin{equation}\label{eqn:tiny-u}
M \cap U_+(\rho, y, 1/16) = \graph_\bC(u), \quad r^{-1} |u| + |\nabla u| \leq 1/10, 
\end{equation}
and
\begin{equation}\label{eqn:tiny-dist}
\rho^{-n-2} \int_{M \cap U(\rho, y, 1/4)} d_\bC^2 + \rho ||H_M||_{L^\infty(U(\rho, y, 1/4))} \leq \eps_2 .
\end{equation}
Then we have
\begin{gather}
M \cap U(\rho, y, 1/8) = \graph_C(u), \quad r^{-1} |u| + |\nabla u| \leq \beta.
\end{gather}
\end{lemma}

\begin{proof}
By dilation invariance, we can suppose $\rho = 1/2$.  Suppose the Lemma is false, and consider a counterexample sequence $M_i$, $y_i$, $\eps_i \to 0$.  Passing to a subsequence, the $y_i \to y \in B_{3/4}^m$, and in $U(\rho, y, 1/5)$ the $M_i$'s converge to some stationary varifold supported in $\refC$.  The multiplicity in each disk is constant, but by the graphicality assumption we converge with multplicity one inside $U_+(\rho, y, 1/16)$.

Therefore the convergence is with multiplicity $1$, and therefore by Allard we satisfy the conclusions of the Lemma when $i >> 1$.
\end{proof}

\begin{lemma}[Graphicality for smooth $\refC_0^\ell \times \R^m$]\label{lem:graph-smooth}
Given any $\beta, \tau > 0$, there is an $\eps(\refC, \beta, \tau)$ so that: if $M \in \cN_{\eps}(\refC)$, then there are open sets $U \subset M$, $\Omega \subset \bC$, with $U \supset M \cap B_{3/4} \setminus B_\tau(\axis)$, and a function $u : \Omega \to \refC^\perp$, so that
\begin{gather}
M \cap U = \graph_{\bC}(u), \quad r^{-1} |u| + |\nabla u| \leq \beta, 
\end{gather}
and
\begin{gather}
\int_{\Omega} r^2 |\nabla u|^2 + \int_{M \cap B_{3/4} \setminus U} r^2 \leq c(\refC, \beta) E(M, \bC, 1) .
\end{gather}
Note that $c$ is \emph{independent} of $\tau$.
\end{lemma}

\begin{proof}
We can assume $\beta \leq 1/10$.  Ensure $\eps \leq \eps_1(\refC, \beta, \tau)$ and $\eps \leq \eps_2(\refC, \beta)$, the constants from Lemmas \ref{lem:global-graph}, \ref{lem:tiny-graph}.  So, from Lemma \ref{lem:global-graph}, $M \cap (B_{1/2} \setminus B_\tau(\axis)) = \graph_{\bC}(u)$ with $u : \Omega \subset \bC \to \bC^\perp$  satisfying estimates \eqref{eqn:global-graph}.

Given $y \in B_{3/4}^m$, define
\begin{gather}
r_y = \inf \{ r' : \text{\eqref{eqn:tiny-u} holds for all $r' < \rho < 3/4$} \}.
\end{gather}
By Lemma \ref{lem:global-graph} $r_y \leq \tau$.  Necessarily by Lemma \ref{lem:tiny-graph}, \eqref{eqn:tiny-dist} must fail at $\rho(\eta)$, and therefore
\begin{gather}
r_y^{n+2} \eps_2 \leq \int_{M \cap U(r_y, y, 1/4)} d_{\bC}^2 + r_y^{n+3} ||H||_{L^\infty(U(\rho, y, 1/4))}.
\end{gather}
In particular, by monotonicity we have
\begin{gather}
\int_{M \cap B_{20 r_y)}(0, \eta)} r^2 \leq c(\refC, \beta) \int_{M \cap U(r_y, y, 1/4)} d_{\bC}^2 + c(\bC, \beta) r_y^{n+3} ||H||_{L^\infty(B_1)}.
\end{gather}

Let $U$ be the region
\begin{gather}
U = \{ (x, y) \in M \cap B_{3/4} : |x| > \rho(y) \}, 
\end{gather}
so that $U \subset B_{3/4} \setminus B_\tau(\axis)$, and $M \cap U = \graph_{\bC}(u)$.

Take a Vitali subcover $\{B_{2\rho_i}(0, y_i)\}_i$ of $\{B_{2 r_y}(y)\}_{y \in B_{3/4}^m}$, and then by construction $\{B_{10\rho_i}(0, y_i)\}_i$ covers $\mu_M$-a.e. $B_{3/4} \setminus U$, and the $U(\rho_i, y_i, 1/4) \subset B_{2\rho_i}(0, y_i)$ are disjoint.  We deduce that
\begin{align}
\int_{M \cap B_{3/4} \setminus U} r^2 
&\leq \sum_i \int_{M \cap B_{20\rho_i}(0, y_i)} r^2 \\
&\leq \sum_i c\int_{M \cap U(\rho_i, y_i, 1/4)} d_{\bC}^2 + \sum_i c \rho_i^{n+3} ||H||_{L^\infty(B_1)} \label{eqn:global-graph-vitali} \\
& \leq c(\refC, \beta) E(M, \bC, 1) . 
\end{align}

Given $(x, y) \in \Omega$ with $d((x, y), \partial \Omega) < |x|/2$, then there are $(x', y') \in \partial \Omega$ with $|x| < 2|x'|$.  We have $(x', y') + u(x', y') \in B_{10\rho_i}(0, y_i)$ for some $i$, and since $|u(x', y')| \leq |x'|/10$, we have
\begin{gather}
|x| < 2|x'| < 20 \rho_i.
\end{gather}
We deduce that $\cup_i B_{20 y_i}(0, \eta_i)$ covers $\Omega' = \{ (x, y) \in \Omega : d((x, y), \partial \Omega) < |x|/2\}$.

Therefore, since $|\nabla u| \leq \beta$ we have from \eqref{eqn:global-graph-vitali} that
\begin{gather}
\int_{\Omega'} r^2 |\nabla u|^2 \leq c(\refC, \beta) E(M, \bC, 1). 
\end{gather}

If $(x, y) \in \Omega \setminus \Omega'$, then we can use Allard and smallness of $\beta$ to give bounds 
\begin{gather}
\int_{\bC \cap B_{|x|/4}(x, y)} r^2 |\nabla u|^2 \leq c \int_{\bC \cap B_{|x|/2}(x, y)} |u|^2 + c |x|^{n+3} ||H_M||_{L^\infty(B_{|x|}(x, y))}.
\end{gather}
Choose an appropriate Vitali subcover of $\{B_{|x|/4}(x, y) : (x, y) \in \Omega \setminus \Omega'\}$, then the resulting cover will have overlap bounded by $c(n)$, and therefore we have
\begin{gather}
\int_{\Omega \setminus \Omega'} r^2 |\nabla u|^2 \leq c E(M, \bC, 1). 
\end{gather}
\end{proof}

\bibliographystyle{plain}
\bibliography{references}

\end{document}